%
%
%
%
\documentclass[a4paper, reqno]{amsart}

\addtolength{\textwidth}{2cm} \addtolength{\hoffset}{-1cm}
\addtolength{\marginparwidth}{-1cm} \addtolength{\textheight}{2cm}
\addtolength{\voffset}{-1cm}

\usepackage{cite}
\usepackage{amsmath}
\usepackage{amssymb}
\usepackage{amsthm}   
\usepackage{empheq}   
\usepackage{graphicx}
\usepackage{color}
\usepackage{appendix}
\usepackage{hyperref} 

\newtheorem{theorem}{Theorem}[section]
\newtheorem{lemma}[theorem]{Lemma}
\newtheorem{proposition}[theorem]{Proposition}
\newtheorem{corollary}[theorem]{Corollary}

\theoremstyle{definition}

\newtheorem{example}[theorem]{Example}

\theoremstyle{remark}
\newtheorem{remark}[theorem]{Remark}

\numberwithin{equation}{section}

\def\EE{{\mathcal{E}}}
\def\FF{{\mathcal{F}}}
\def\ss{{\mathtt{s}}}
\def\tt{{\mathtt{t}}}



\begin{document}

\title[Class of smooth functions]{Class of smooth functions in Dirichlet spaces}

\author{Patrick J. Fitzsimmons}
\address{Department of Mathematics, University of California, San Diego, La Jolla, California 92093-0112}
\email{pfitzsim@ucsd.edu}

\author{ Liping Li*}
\address{Institute of Applied Mathematics, Academy of Mathematics and Systems Science, Chinese Academy of Sciences, Beijing 100190, China.}
\email{liping\_li@amss.ac.cn}
\thanks{*Corresponding author.}

\subjclass[2010]{31C25, 60J45, 60J60, 60H05}



\keywords{Dirichlet form, Fukushima's decomposition, regular Dirichlet subspace, diffusion process, stochastic differential equation.}

\begin{abstract}
Given a regular Dirichlet form $(\EE,\FF)$ on a fixed domain $E$ of $\mathbb{R}^d$, we first indicate that the basic assumption $C_c^\infty(E)\subset \FF$ is equivalent to the fact that each coordinate function $f^i(x)=x_i$ locally belongs to $\FF$ for $1\leq i\leq d$. Our research starts from these two different viewpoints. On one hand, we shall explore when $C_c^\infty(E)$ is a special standard core of $\FF$ and give some useful characterizations. On the other hand, we shall describe the Fukushima's decompositions of $(\EE,\FF)$ with respect to the coordinates functions, especially discuss when their martingale part is a standard Brownian motion and what we can say about their zero energy part. Finally, when we put these two kinds of discussions together, an interesting class of stochastic differential equations are raised. They have uncountable solutions that do not depend on the initial condition. 
\end{abstract}

\maketitle

\tableofcontents

\section{Introduction}\label{SEC1}

Our interests are motivated by S. Orey's work \cite{O74}, in which a complete description on the diffusion processes was given to ensure that  they are absolutely continuous with respect to one-dimensional Brownian motion. More precisely, let $I$ be an open one-dimensional interval and $\mathbf{X}^1,\mathbf{X}^2$ two irreducible diffusion processes on $I$. We use the standard way via coordinate space to realize such diffusion processes: $\Omega$ is the class of all continuous functions from $[0,\infty)$ to $I$ and for any $\omega\in \Omega$, $X_t(\omega):=\omega(t)$. Let $\mathcal{F}_t$ be the $\sigma$-field generated by $\{X_s: s\leq t\}$ and $\mathcal{F}_\infty$ the least $\sigma$-field including all the $\FF_t$. Then $\mathbf{X}^i$ corresponds to a class of probability measures $(\mathbf{P}_i^x)_{x\in I}$ on $\Omega$ for $i=1,2$. In what follows, we say $\mathbf{X}^1$ is absolutely continuous with respect to $\mathbf{X}^2$ if $\mathbf{P}_1^x|_{\FF_t}\ll \mathbf{P}_2^x|_{\FF_t}$ for any $x\in I$ and $t\geq 0$, where ``$\ll$" stands for ``is absolutely continuous with respect to''. When $I=(-\infty,\infty)$ and $\mathbf{X}^2$ is a Brownian motion, S. Orey deduced that $\mathbf{X}^1$ admits the following decomposition:
\begin{equation}\label{EQ1XTX}
	X_t-x=B_t+\int_0^t b(X_s)ds,\quad \mathbf{P}_1^x\text{-a.s.},
\end{equation}
where $B=(B_t)_{t\geq 0}$ is a standard Brownian motion and $b$ is  some appropriate function. By introducing the techniques of Dirichlet forms, M. Fukushima in \cite{Fu82} extended the above result to multidimensional symmetric diffusions and also achieved similar decomposition to \eqref{EQ1XTX}. Then the general cases that $\mathbf{X}^1$ and $\mathbf{X}^2$ are both symmetric Markov processes (not only diffusion processes) on some general state spaces are considered by \cite{Os87, K76, F97, CFT04}. 

In S. Orey and M. Fukushima's literatures, the decomposition \eqref{EQ1XTX} admits an obvious fact: the coordinate function $f(x):=x,~\forall x\in \mathbb{R}$ locally belongs to the associated Dirichlet space $\FF$ of $\mathbf{X}^1$, in other words, $f\in \FF_\mathrm{loc}$. Hence \eqref{EQ1XTX} is nothing but the Fukushima's decomposition of $\mathbf{X}^1$ with respect to $f$. Particularly, its martingale part is a Brownian motion and zero energy part is of locally bounded variation. 

So some relevant questions arise naturally at this point. For example, except for S. Orey's absolute continuity, are there any other ways to get diffusion processes which satisfy the similar decompositions to \eqref{EQ1XTX}? But the word ``similar'' here is slightly crude. We may explain more.  At least, we want to assume the diffusion process and its associated Dirichlet space $\FF$ satisfy the basic property:
\begin{equation}\label{EQ1FFL}
	f\in \FF_\mathrm{loc}. 
\end{equation}
Then we have the following Fukushima's decomposition:
\begin{equation}\label{EQ1XXM}
	f(X_t)-f(X_0)=M^{[f]}_t+N^{[f]}_t,\quad t\geq 0,
\end{equation}
where $M^{[f]}$ is a locally martingale additive functional (MAF) of finite energy and $N^{[f]}$ is a locally continuous additive functional (CAF) of zero energy. It is possible to analyse $M^{[f]}$ and $N^{[f]}$ carefully, for instance, when $M^{[f]}$ is a Brownian motion, and what we can say about the zero energy part $N^{[f]}$. Actually we shall give some detailed considerations about these questions, such as in \S\ref{SEC4}. 

In the mean time, when we recheck the basic assumption \eqref{EQ1FFL}, it is very easy to find \eqref{EQ1FFL} is equivalent to
\begin{equation}\label{EQ1CIF}
	C_c^\infty(I)\subset \FF,
\end{equation}
where $C_c^\infty(I)$ is the class of all infinite continuous differentiable functions with compact supports on $I$. We refer this equivalence to \cite{BD59, Fu82}. However, for most of the classical examples 
in Dirichlet forms (say \cite[\S3.1]{FOT11}), the processes are generated by the special standard core $C_c^\infty$, in other words, the smooth function class $C_c^\infty$ is dense in the Dirichlet space $\FF$ with respect to its natural norm. Very few articles concern about whether $C_c^\infty$ would be a core of a fixed Dirichlet space or not. (As we know, some similar works were done in \cite{C92, Ku02, SU12} from the aspect of reflected Dirichlet spaces). Indeed, once we prove the closable property of $C_c^\infty$, its smallest closed extension will naturally be a regular Dirichlet form and hence correspond to a good Markov process. But in many situations, we always need to start some work with a fixed but unclear Dirichlet space $\FF$. So we think it is worth discussing the denseness of $C_c^\infty$ in $\FF$ under the basic condition \eqref{EQ1CIF}. In the beginning, this work is supposed to be independent of the research on Fukushima's decomposition \eqref{EQ1XXM}, but soon we find some deep and interesting connections between them.

Let us briefly introduce the structure of this paper. In \S\ref{SEC2}, we shall present our main results and their applications, which attract our interests. The focus is to characterize the basic assumption \eqref{EQ1FFL}, and answer when $C_c^\infty$ could be a special standard core of $(\EE,\FF)$. Particularly, we shall raise a class of stochastic differential equations that have uncountable solutions in \S\ref{SEC24}. The results of \S\ref{SEC21} and \S\ref{SEC22} will be proved in \S\ref{SEC3}. In \S\ref{SEC4}, we shall describe the Fukushima's decomposition \eqref{EQ1XXM}, especially about when $M^{[f]}$ is a Brownian motion, and whether $N^{[f]}$ is of bounded variation or not. The descriptions in this section provide a possible method to prove the conclusions in \S\ref{SEC23} and \S\ref{SEC24}. Finally, in \S\ref{SEC5}, we shall give several examples to explain or support the results in previous sections. 

Some notations should be noted first. Fix a domain $E$ of $\mathbb{R}^d$, the function classes $C(E),C^1(E)$ and $C^\infty(E)$ are the sets of all continuous functions, one order continuous differentiable functions, and infinite continuous differentiable functions on $E$ respectively. For any Radon measure $\mu$ on $E$, the Hilbert space of all $\mu$-square integrable functions on $E$ is denoted by $L^2(E,\mu)$ or $L^2(\mu)$. Its norm and inner product are denoted by $\|\cdot \|_\mu$ and $(\cdot, \cdot)_\mu$. Particularly, if $\mu$ is the Lebesgue measure (we usually denote it by $\lambda_E$ or $dx$), $L^2(E,\mu)$ will be written as $L^2(E)$ or $L^2$ in abbreviation. The similar notations are also provided for integrable functions in the usual way.  For any function class $\Theta$, the subclass of all the functions locally (resp. with compact support, bounded) in $\Theta$ will denoted by $\Theta_{\text{loc}}$ (resp. $\Theta_c,\Theta_b$). For a mapping $p:E_1\rightarrow E_2$, where $E_1,E_2$ are two subsets of $\mathbb{R}^d$, define 
\[
	p(E_1):=\{p(x)\in E_2:x\in E_1\}.
\]
For any function $p$ on $\mathbb{R}$ or a subset of $\mathbb{R}$ and any point $a\in \mathbb{R}$, $p(a+)$ (resp. $p(a-)$) stands for the right (resp. left) limitation of $p$ at the point $a$ if it exists. Particularly, $p(\infty)$ or $p(-\infty)$ represents the left of right limitation of $p$ at $\infty$ or $-\infty$ if it exists. 

\section{Main results and their applications}\label{SEC2}

Throughout this paper, the state space $E$ is to be $\mathbb{R}^d$ or a domain of $\mathbb{R}^d$ for arbitrary natural number $d$, $m$ is a Radon measure fully supported on $E$. Furthermore, $(\EE,\FF)$ is a regular and strongly local Dirichlet form on $L^2(E,m)$ whose associated diffusion process is $\mathbf{X}=((X_t)_{t\geq 0}, (\mathbf{P}^x)_{x\in E})$, and $\FF_\mathrm{loc}$ is denoted as the local Dirichlet space of $\FF$ (Cf. \cite[\S3.2]{FOT11}). We always make the following basic assumption:
\begin{equation}\label{EQ2CCE}
	C_c^\infty(E)\subset \FF. 
\end{equation}
All the terminologies and notations of Dirichlet forms are referred to \cite{CF12, FOT11}. 

\subsection{One-dimensional cases}\label{SEC21}

We first consider $E=I=(a,b)$, an open interval of $\mathbb{R}$. Then $\mathbf{X}$ is to be a minimal diffusion process on $I$ with scaling function $\mathtt{s}$, speed measure $m$ and no killing inside (Cf.  \cite[Example~3.5.7]{CF12}). More precisely,
\begin{equation}\label{EQ2FFS}
\begin{aligned}
	\FF&=\FF^{(\ss,m)}_0, \\
	\EE(u,v)&=\EE^{(\ss,m)}(u,v):=\frac{1}{2}\int_I \frac{du}{d\ss}\frac{dv}{d\ss}d\ss,\quad u,v\in \FF, 
\end{aligned}
\end{equation}
where $\FF^{(\ss,m)}_0$ is the closure of $C_c^\infty\circ \ss:=\{\varphi\circ \ss: \varphi\in C_c^\infty(J)\}$ relative to the norm $\|\cdot\|_{\EE_1}$ and $J:=\ss(I)$. We refer the detailed expression of $\FF^{(\ss,m)}_0$ to \cite{FHY10}. 
 The scaling function $\mathtt{s}$ is strictly increasing and continuous, in other words, the Stieltjes measure $d\mathtt{s}$ is uniquely determined by $\mathbf{X}$. It is unique up to a constant. To avoid the occurrence of equivalence class, we take a fixed point $e\in I$ (if $I=\mathbb{R}$, set $e=0$) and suppose every scaling function equals $0$ at $e$.  Note that $\ss$ is not necessarily absolutely continuous. We also refer these terminologies to \cite{RY99, RW00}.
 
 We first have an equivalent characterization based on scaling function $\mathtt{s}$ to \eqref{EQ2CCE}. Let $\mathtt{t}:=\mathtt{s}^{-1}$ be the inverse function of $\mathtt{s}$ on $J$. 
 
\begin{lemma}\label{LM21}
	Let $(\EE,\FF)$ be the Dirichlet form in \eqref{EQ2FFS}. Then $C_c^\infty(I)\subset \FF$ if and only if $\mathtt{t}$ is absolutely continuous and $\mathtt{t}'\in L^2_\mathrm{loc}(J)$. 
\end{lemma}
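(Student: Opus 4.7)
The plan is to translate the inclusion $C_c^\infty(I)\subset\FF$ into a condition on $\tt=\ss^{-1}$ via the change of variables $y=\ss(x)$, reducing the problem to the Sobolev structure of $\FF$ on the image interval $J$.

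First I would identify $\FF$ with a weighted Sobolev-type space on $J$. For $\varphi\in C_c^\infty(J)$, set $u:=\varphi\circ\ss$; the substitution $y=\ss(x)$ gives
\begin{equation*}
\EE(u,u)=\tfrac12\int_J\varphi'(y)^2\,dy,\qquad \|u\|_m^2=\int_J\varphi(y)^2\,d\tilde m(y),
\end{equation*}
where $\tilde m:=m\circ\tt$ is the pushforward of $m$ under $\ss$. Consequently the map $u\mapsto u\circ\tt$ extends to a Hilbert space isometry from $\FF$ onto the closure $H$ of $C_c^\infty(J)$ in the norm $\|\psi\|_1^2:=\tfrac12\|\psi'\|_{L^2(J)}^2+\|\psi\|_{L^2(J,\tilde m)}^2$. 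In particular every element of $H$ is (locally) absolutely continuous on $J$ with derivative in $L^2(J)$.

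For necessity, assume $C_c^\infty(I)\subset\FF$. Given any compact subinterval $[p,q]\subset I$, choose $f\in C_c^\infty(I)$ with $f(x)=x$ on $[p,q]$. Then $f\circ\tt\in H$ is AC on $J$, and on $[\ss(p),\ss(q)]$ it coincides with $\tt$. Hence $\tt$ is absolutely continuous on every compact subinterval of $J$, and $\tt'=(f\circ\tt)'$ belongs to $L^2(\ss(p),\ss(q))$. Since $[p,q]$ is arbitrary, $\tt$ is locally AC and $\tt'\in L^2_\mathrm{loc}(J)$. For sufficiency, assume $\tt$ is AC with $\tt'\in L^2_\mathrm{loc}(J)$. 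Given $f\in C_c^\infty(I)$ with compact support $K\subset I$, the composition $\psi:=f\circ\tt$ has compact support $\ss(K)\subset J$; it is AC with $\psi'=f'(\tt)\,\tt'\in L^2(J)$ since $f'$ is bounded and $\tt'\in L^2(\ss(K))$. A standard mollification of $\psi$ produces $\psi_\varepsilon\in C_c^\infty(J)$ with $\psi_\varepsilon\to\psi$ uniformly on a fixed compact set and $\psi_\varepsilon'\to\psi'$ in $L^2(J)$, which implies $\psi_\varepsilon\to\psi$ in the norm $\|\cdot\|_1$ (the weighted $L^2(\tilde m)$ convergence follows from uniform convergence on a compact set together with the Radon property of $\tilde m$). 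Hence $\psi\in H$, i.e.\ $f\in\FF$.

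The main obstacle is really the mollification step in the sufficiency direction: one must ensure that the smooth approximations remain compactly supported in $J$ and that convergence holds simultaneously in the Dirichlet energy and in the $L^2(\tilde m)$ inner product, even though $\tilde m$ is only a Radon measure with no a priori regularity. Once this approximation is in place, the rest of the argument reduces to bookkeeping with the substitution $y=\ss(x)$ and the identification of $\FF$ with $H$.
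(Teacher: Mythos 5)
Your proof is correct, and its skeleton matches the paper's: for necessity you build $f(x)=x\cdot h(x)$ with a smooth cutoff $h$ (exactly the paper's Urysohn step, except that the paper routes this through the intermediate condition $f\in\FF_{\mathrm{loc}}$, which it proves equivalent as well), and for sufficiency you use $\psi=f\circ\tt$ with $\psi'=f'(\tt)\,\tt'\in L^2$. The genuine difference is where the structural input lives. The paper stays on $I$, works with the derivative $du/d\ss$, and delegates two facts to the literature: that every element of $\FF^{(\ss,m)}_0$ is absolutely continuous with respect to $\ss$ (cited to \cite[\S2]{FHY10}, \cite[(2.2.28)]{CF12}), and, implicitly in the step ``hence $\phi\in\FF$'', the explicit description of $\FF^{(\ss,m)}_0$ as the space of compactly supported finite-energy functions absolutely continuous with respect to $\ss$. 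You instead transplant everything to the scale coordinate via the isometry $u\mapsto u\circ\tt$ onto the closure $H$ of $C_c^\infty(J)$, and you replace the second citation by an explicit mollification argument, which is more self-contained and is the right way to see why compact support inside the open interval $J$ matters (the support of $\psi_\varepsilon$ stays in a fixed compact subset, and Radon-ness of the pushforward measure handles the $L^2$ part). The one place where you lean on an unproved structural fact exactly as the paper does is the assertion that every element of $H$ is locally absolutely continuous with derivative in $L^2(J)$: this needs the standard closability argument using that the pushforward $\ss_*m$ has full support on $J$ (so that $\tilde m$-a.e.\ convergence of a subsequence pins down the additive constant), and for necessity you also tacitly use full support to identify the continuous representative of $f\circ\tt$ with $\tt$ on $[\ss(p),\ss(q)]$; both points are routine but worth a sentence. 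Finally, a cosmetic warning: your symbol $\tilde m$ for the pushforward $\ss_*m$ collides with the paper's $\tilde m$ in \eqref{EQ24MXT}, which denotes the different measure $\tt'\circ\ss(x)\,dx$; rename one of them if this is to coexist with the rest of the text.
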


Under the assumption \eqref{EQ2CCE}, one may easily check that the form
\[
	\bar{\EE}(u,v):=\EE(u,v),\quad u,v\in C_c^\infty(I)
\]
with the domain $C_c^\infty(I)$ is closable on $L^2(I,m)$. Denote its smallest closed extension by $(\bar{\EE},\bar{\FF})$. Then $(\bar{\EE},\bar{\FF})$ is a regular and strongly local Dirichlet form on $L^2(I,m)$ by  \cite[Theorem~3.1.2]{FOT11}. Moreover, $\bar{\FF}$ is a regular Dirichlet subspace of $\FF$ in the sense that
\[
	\bar{\FF}\subset \FF,\quad \EE(u,v)=\bar{\EE}(u,v),\quad u,v\in \bar{\FF}. 
\]
We refer some related studies about regular Dirichlet subspaces to \cite{FFY05, FHY10, LY15, SL15}. Particularly, it follows from \cite[Proposition~2.1]{SL15} that $(\bar{\EE},\bar{\FF})$ corresponds to another minimal diffusion process denoted by $\bar{\mathbf{X}}$ on $I$ with the same speed measure $m$ and no killing inside. So the critical questions are what the scaling function of $\bar{\mathbf{X}}$ is and whether $\bar{\FF}=\FF$ (or $\bar{\mathbf{X}}=\mathbf{X}$).

Before presenting our result, we need to introduce some notations.  Let $\lambda_I$ be the Lebesgue measure on $I$. Then we may write the Lebesgue decomposition of $d\mathtt{s}$ with respect to $\lambda_I$ on $I$, that is, there exist a positive function $g\in L^1_\mathrm{loc}(I)$ and another positive Radon measure $\kappa$ on $I$ such that 
\begin{equation}\label{EQ2SGL}
	d\ss=g\cdot \lambda_I+\kappa,\quad \kappa\perp \lambda_I. 
\end{equation}
Denote the absolutely continuous part of $d\ss$ by $d\bar{\ss}$, or
\begin{equation}\label{EQ2SXE}
	\bar{\ss}(x):=\int_e^x g(y)\lambda_I(dy),\quad x\in I.  
\end{equation}
We need to point out $\bar{\ss}$ is also a scaling function, i.e. a strictly increasing and continuous function on $I$. The continuity of $\bar{\ss}$ is obvious. Thus we need only to prove $g>0$ a.e. Since $\kappa\perp \lambda_I$, we may take a set $H$ of $\lambda_I$-zero measure such that $\kappa(I\setminus H)=0$. Let
\[
	Z_g:=\left\{x\in I\setminus H: g(x)=0  \right\}. 
\]
If $\lambda_I(Z_g)>0$, then
\[
	d\ss(Z_g)=\int_{Z_g}g(x)\lambda_I(dx)=0, 
\]
which contradicts the fact $\lambda_I\ll d\ss$ (Cf. Lemma~\ref{LM21}). 

\begin{theorem}\label{THM21}
Let $(\EE,\FF)$ be the Dirichlet form in \eqref{EQ2FFS}.  Assume \eqref{EQ2CCE} is satisfied. Then the absolutely continuous part $\bar{\ss}$ of $\ss$ is the scaling function of regular Dirichlet form $(\bar{\EE},\bar{\FF})$. In particular, $C_c^\infty(I)$ is a special standard core of $(\EE,\FF)$ if and only if its scaling function $\ss$ is absolutely continuous. 
\end{theorem}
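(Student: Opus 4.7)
My plan is to identify $(\bar{\EE},\bar{\FF})$ as the Dirichlet form of a minimal diffusion via \cite[Proposition~2.1]{SL15} and then show its scaling function must coincide with $\bar{\ss}$. Accordingly, write $(\bar{\EE},\bar{\FF}) = (\EE^{(\bar{\mathtt{r}},m)},\FF^{(\bar{\mathtt{r}},m)}_0)$ for some scaling function $\bar{\mathtt{r}}$ normalized to $\bar{\mathtt{r}}(e)=0$, so that the task reduces to proving $\bar{\mathtt{r}}=\bar{\ss}$.

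The first step is to match absolutely continuous parts. Applying Lemma~\ref{LM21} to both $\ss$ and $\bar{\mathtt{r}}$ (since $C_c^\infty(I)$ lies in both $\FF$ and $\bar{\FF}$) yields $\lambda_I\ll d\ss$ and $\lambda_I\ll d\bar{\mathtt{r}}$. With Lebesgue decompositions $d\ss = g\,d\lambda_I + \kappa$ and $d\bar{\mathtt{r}} = g^{*}\,d\lambda_I + \kappa^{*}$, the key observation is that for $u\in C_c^\infty(I)$ the measure $du = u'\,d\lambda_I$ is mutually singular with $\kappa$ and $\kappa^*$. A Radon--Nikodym computation gives $du/d\ss = u'/g$ on $\{g>0\}$ and $=0$ on the carrier of $\kappa$, and analogously for $\bar{\mathtt{r}}$, producing
\[
\EE(u,u) = \tfrac{1}{2}\int_I \frac{(u')^{2}}{g}\,d\lambda_I, \qquad \bar{\EE}(u,u) = \tfrac{1}{2}\int_I \frac{(u')^{2}}{g^{*}}\,d\lambda_I.
\]
Since $\bar{\EE}=\EE$ on $C_c^\infty(I)$ and $u'$ ranges over a sufficiently rich family, this forces $g^{*}=g$ $\lambda_I$-a.e., i.e.\ the absolutely continuous parts of $\ss$ and $\bar{\mathtt{r}}$ both equal $\bar{\ss}$. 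A byproduct, obtained by taking $u$ with $u'\equiv 1$ on a compact subinterval $[a',b']\subset I$, is $1/g\in L^{1}_{\mathrm{loc}}(I)$.

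The main obstacle I expect is to rule out the singular part $\kappa^{*}$ of $\bar{\mathtt{r}}$. My strategy is first to establish $\bar{\FF}\subset \FF^{(\bar{\ss},m)}_{0}$. Lemma~\ref{LM21} applied to $\bar{\ss}$ supplies the inclusion $C_c^\infty(I)\subset \FF^{(\bar{\ss},m)}_{0}$: the inverse $\bar{\tt} := \bar{\ss}^{-1}$ is absolutely continuous because $d\bar{\ss} = g\,d\lambda_I$ with $g>0$ a.e.\ gives $\lambda_I\ll d\bar{\ss}$, and the change of variable $\int_K (\bar{\tt}')^{2}\,dy = \int_{\bar{\tt}(K)} (1/g)\,dx$ combined with $1/g\in L^{1}_{\mathrm{loc}}(I)$ yields $\bar{\tt}'\in L^{2}_{\mathrm{loc}}(\bar{\ss}(I))$. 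Since $\bar{\EE}=\EE^{(\bar{\ss},m)}$ on $C_c^\infty(I)$, taking $\EE_{1}$-closures inside $L^{2}(I,m)$ gives $\bar{\FF}\subset \FF^{(\bar{\ss},m)}_{0}$. Now for any $\varphi\in C_c^\infty(\bar{\mathtt{r}}(I))$, the function $v := \varphi\circ\bar{\mathtt{r}}$ belongs to $\FF^{(\bar{\mathtt{r}},m)}_{0} = \bar{\FF}\subset \FF^{(\bar{\ss},m)}_{0}$; membership in $\FF^{(\bar{\ss},m)}_{0}$ forces (cf.~\cite{FHY10}) the Stieltjes measure $dv$ to be absolutely continuous with respect to $d\bar{\ss} = g\,d\lambda_I$. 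However,
\[
dv = \varphi'(\bar{\mathtt{r}})\,d\bar{\mathtt{r}} = \varphi'(\bar{\mathtt{r}})\,g\,d\lambda_I + \varphi'(\bar{\mathtt{r}})\,d\kappa^{*},
\]
and the second summand is singular to $\lambda_I$, hence to $d\bar{\ss}$. Thus $\varphi'(\bar{\mathtt{r}})\,d\kappa^{*}=0$ for every $\varphi\in C_c^\infty(\bar{\mathtt{r}}(I))$; choosing $\varphi$ with $\varphi'$ nonvanishing on any prescribed open subinterval of $\bar{\mathtt{r}}(I)$ forces $\kappa^{*}=0$. Therefore $\bar{\mathtt{r}}=\bar{\ss}$, proving the main assertion.

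The ``in particular'' statement is then an immediate corollary: $C_c^\infty(I)$ is a special standard core of $(\EE,\FF)$ iff $\bar{\FF}=\FF$, and by uniqueness of the scaling function under the normalization $\ss(e)=\bar{\ss}(e)=0$ this holds iff $\bar{\ss}=\ss$, i.e.\ iff the singular part $\kappa$ vanishes, which is precisely the absolute continuity of $\ss$.
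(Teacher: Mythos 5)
Your proof is correct, and while it shares the paper's scaffolding, it reaches the identification $\bar{\mathtt{r}}=\bar{\ss}$ by a genuinely different mechanism. Both arguments realize $(\bar{\EE},\bar{\FF})$ as $\left(\EE^{(\bar{\mathtt{r}},m)},\FF^{(\bar{\mathtt{r}},m)}_0\right)$ via \cite[Proposition~2.1]{SL15}, and both first establish $C_c^\infty(I)\subset \FF^{(\bar{\ss},m)}_0$, hence $\bar{\FF}\subset \FF^{(\bar{\ss},m)}_0$ --- this is the paper's Lemma~\ref{LM32}, and your change-of-variables derivation, with $1/g\in L^1_{\mathrm{loc}}(I)$ extracted directly from $\EE(u,u)<\infty$ for $u\in C_c^\infty(I)$, is a slightly more self-contained version of it (the paper instead routes through $\tt'\in L^2_{\mathrm{loc}}(J)$). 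The divergence is in how the scaling function of $\bar{\FF}$ is then pinned down. The paper invokes the classification of regular Dirichlet subspaces of a one-dimensional diffusion (\cite[Theorem~4.1]{FHY10}, \cite[Proposition~2.2]{SL15}): the scaling function $\hat{\ss}$ of $\bar{\FF}$ satisfies $d\hat{\ss}/d\bar{\ss}\in\{0,1\}$ $d\bar{\ss}$-a.e., which kills any singular part of $d\hat{\ss}$ for free, and a positive-measure zero set of the density is ruled out because it would destroy the absolute continuity of $\hat{\tt}$ guaranteed by $C_c^\infty(I)\subset\bar{\FF}$ and Lemma~\ref{LM21}. You avoid this classification theorem entirely: you match the absolutely continuous parts by computing both forms explicitly on $C_c^\infty(I)$ (your formula $\EE(u,u)=\tfrac{1}{2}\int_I (u')^2 g^{-1}\,d\lambda_I$ is the paper's Remark~\ref{RM31} in disguise, since $\tt'\circ\ss=1/g$ a.e.) together with a du Bois--Reymond argument, and you eliminate $\kappa^*$ by testing the scale-absolute-continuity of $\varphi\circ\bar{\mathtt{r}}\in \FF^{(\bar{\ss},m)}_0$ against the uniqueness of the Lebesgue decomposition. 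Both steps are sound; what your route buys is independence from the structure theory of regular Dirichlet subspaces, at the price of redoing part of its content by hand, while the paper's route is shorter once that theory is granted. One point you should write out rather than gloss with ``sufficiently rich family'': the identity $\int_I u'v'\left(1/g-1/g^{*}\right)d\lambda_I=0$ for all $u,v\in C_c^\infty(I)$ yields at first, via the localization with $v'\equiv 1$ on a compact subinterval and \cite[Corollary~3.32]{AF03}, only that $1/g-1/g^{*}$ is a.e.\ \emph{constant} on $I$; one must then take $u=v$ nonconstant to conclude the constant vanishes. This is routine, not a gap, but the argument is genuinely two-step.
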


\begin{remark}
	In this theorem (and hereafter), we always assume that the diffusion process has no killing inside. But this assumption is not essential. In fact, no matter whether the killing measure of $\mathbf{X}$ is present or not, we may do the resurrected transform or killing transform (Cf.  \cite[Theorem~5.1.6 and 5.2.17]{CF12}) on $(\EE,\FF)$. Then the two different situations (with or without killing inside) exchange, whereas any special standard core remains. 
\end{remark}

Naturally, we have the following expression:
\[
\begin{aligned}
	\bar{\FF}&=\FF^{(\bar{\ss},m)}_0,\\
	\bar{\EE}(u,v)&=\frac{1}{2}\int_I \frac{du}{d\bar{\ss}}\frac{dv}{d\bar{\ss}}d\bar{\ss},\quad u,v\in \bar{\FF}. 
\end{aligned}
\]
Furthermore, $C_c^\infty(I)$ and $C_c^\infty\circ \bar{\ss}:=\{\varphi\circ \bar{\ss}: \varphi\in C_c^\infty(\bar{J})\}$ ($\bar{J}:=\bar{\ss}(I)$) are simultaneously the special standard cores of $(\bar{\EE},\bar{\FF})$. 
Intuitively, the ``smooth part'' $\bar{\FF}$ of $\FF$ inherits exactly the ``good" (say absolutely continuous) part of $\FF$'s scaling function.   Particularly, if $\ss$ is absolutely continuous, then it naturally follows $\ss=\bar{\ss}$, and $\mathbf{X}$ equals $\bar{\mathbf{X}}$.

\subsection{Multi-dimensional cases: Cartesian product and skew product}\label{SEC22}

By employing Theorem~\ref{THM21}, we may deal with the analogous problems about diffusion processes on some special multi-dimensional domains. 

\subsubsection{Cartesian product}\label{SEC221}
The first case is the Cartesian product of one-dimensional diffusions on a rectangle cube. Let $d\geq 2$ be an integer and $\{I_i: 1\leq i\leq d\}$ a sequence of open intervals. For each $i$, $\mathbf{X}^i$ is a minimal diffusion process  on $I_i$ with scaling function $\ss_i$, speed measure $m_i$ and no killing inside. Set further
\[
\begin{aligned}
	E&:=I_1\times \cdots \times I_d, \\
	m&:= m_1\times \cdots \times m_d. 
\end{aligned}
\]
Assume that $\mathbf{X}$ is a diffusion process on $E$ such that its $i$-th component is equivalent to $\mathbf{X}^i$ in distribution and all the components are independent. More precisely, for any $x=(x_1,\cdots, x_d)\in E$ and $1\leq i\leq d$, set $f^i(x):=x_i$. Then $\mathbf{X}$ satisfies $f^i(\mathbf{X})\overset{d}{=} \mathbf{X}^i$ and $\{f^i(\mathbf{X}): 1\leq i\leq d\}$ are independent. Note that the diffusion process $\mathbf{X}$ is called the \emph{Cartesian product} of $\{\mathbf{X}^1,\cdots, \mathbf{X}^d\}$ (Cf. \cite[\S15]{S88}). Particularly, the boundary of $E$ is the trap of $\mathbf{X}$.

Denote the associated Dirichlet form of $\mathbf{X}^i$ on $L^2(I_i, m_i)$ by $(\EE^i,\FF^i)$.  The Cartesian product of Dirichlet forms was studied by H. \^Okura in \cite{O89, O97}. It is well known now that $\mathbf{X}$ is $m$-symmetric and its associated Dirichlet form $(\EE,\FF)$ on $L^2(E,m)$ is regular with a core of tensor product of $\mathbf{X}^i$s'. In other words, if $\mathcal{C}_i$ is a core of $(\EE^i,\FF^i)$,  then 
\[
	\mathcal{C}:=\mathcal{C}_1\otimes \cdots \otimes  \mathcal{C}_d=\text{span}\{u(x)=u_1(x_1)\cdots u_d(x_d): u_i\in \mathcal{C}_i, 1\leq i\leq d\}
\]
is a core of $(\EE,\FF)$. We also refer the similar result to \cite[Proposition~3.1]{LY15}. 


To state an analogous result of Theorem~\ref{THM21}, we need to introduce more notations. Let $\mathtt{t}_i:=\mathtt{s}_i^{-1}$ be the inverse function of $\mathtt{s}_i$ on $J_i:=\mathtt{s}_i(I_i)$. Further set 
\[
\mathbf{s}:=(\mathtt{s}_1,\cdots, \mathtt{s}_d),\quad \mathbf{t}:=(\mathtt{t}_1,\cdots, \mathtt{t}_d). 
\]
Then $\mathbf{s}$ (with the inverse $\mathbf{t}$) is a homeomorphism between $E$ and $U:=J_1\times \cdots \times J_d$. We say $\mathbf{s}$ (resp. $\mathbf{t}$) is absolutely continuous if each $\mathtt{s}_i$ (resp. $\mathbf{t}_i$) is absolutely continuous. If $\mathbf{t}$ is absolutely continuous, write
\[
	\mathbf{t}':=(\mathtt{t}'_1,\cdots, \mathtt{t}'_d). 
\]
Clearly, $\mathbf{t}'\in L^2_\mathrm{loc}(U)$ if and only if each $\mathtt{t}'_i\in L^2_\mathrm{loc}(J_i)$ for $1\leq i\leq d$. We then have an analogous characterization of Lemma~\ref{LM21} to \eqref{EQ2CCE}. 

\begin{lemma}\label{LM24}
	Let $(\EE,\FF)$ be the associated Dirichlet form of Cartesian product $\mathbf{X}$ of minimal diffusion processes $\{\mathbf{X}^1,\cdots,\mathbf{X}^d\}$. Then $C_c^\infty(E)\subset \FF$ if and only if $\mathbf{t}$ is absolutely continuous and $\mathbf{t}'\in L^2_\mathrm{loc}(U)$. 
\end{lemma}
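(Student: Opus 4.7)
The plan is to reduce Lemma~\ref{LM24} to its one-dimensional counterpart, Lemma~\ref{LM21}, via the tensor-product structure of the Cartesian product Dirichlet form. The key analytic input is the product-form description due to \cite{O89, O97}: a measurable $u\in L^2(E,m)$ belongs to $\FF$ if and only if, for each $1\le i\le d$ and $m^{(-i)}$-a.e. $x^{(-i)}\in E^{(-i)}:=\prod_{j\ne i}I_j$, the slice $x_i\mapsto u(x_i,x^{(-i)})$ lies in $\FF^i$ and
\[
	\EE(u,u)=\sum_{i=1}^d\int_{E^{(-i)}}\EE^i\bigl(u(\cdot,x^{(-i)}),\,u(\cdot,x^{(-i)})\bigr)\,dm^{(-i)}(x^{(-i)})<\infty,
\]
where $m^{(-i)}:=\bigotimes_{j\ne i}m_j$. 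Once this description is in hand, each direction of Lemma~\ref{LM24} reduces to Lemma~\ref{LM21}.

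For necessity ($\Rightarrow$), I would probe $C_c^\infty(E)\subset\FF$ with elementary tensors. Given $\phi_i\in C_c^\infty(I_i)$, fix auxiliary $\phi_j\in C_c^\infty(I_j)$ for $j\ne i$ which are strictly positive on nonempty open subsets, and set $u:=\phi_1\otimes\cdots\otimes\phi_d\in C_c^\infty(E)\subset\FF$. The slice criterion then applies to $u$: for $m^{(-i)}$-a.e. $x^{(-i)}$ the slice $\phi_i(\cdot)\prod_{j\ne i}\phi_j(x_j)$ belongs to $\FF^i$, and any $x^{(-i)}$ making the scalar $\prod_{j\ne i}\phi_j(x_j)$ nonzero--- the set of such $x^{(-i)}$ is a nonempty open set, hence of positive $m^{(-i)}$-measure because each $m_j$ has full support---forces $\phi_i\in\FF^i$. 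Consequently $C_c^\infty(I_i)\subset\FF^i$ for every $i$, and Lemma~\ref{LM21} yields that $\tt_i$ is absolutely continuous with $\tt_i'\in L^2_{\mathrm{loc}}(J_i)$.

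For sufficiency ($\Leftarrow$), take $u\in C_c^\infty(E)$. Each slice $u(\cdot,x^{(-i)})$ lies in $C_c^\infty(I_i)$, and since $\tt_i$ is absolutely continuous with $\tt_i'\in L^2_{\mathrm{loc}}(J_i)$, Lemma~\ref{LM21} places the slice in $\FF^i$ and gives, via the same computation used to prove that lemma, the explicit formula
\[
	\EE^i\bigl(u(\cdot,x^{(-i)}),u(\cdot,x^{(-i)})\bigr)=\tfrac12\int_{J_i}\bigl(\partial_iu(\tt_i(y_i),x^{(-i)})\bigr)^2\,\tt_i'(y_i)^2\,dy_i.
\]
Because $\partial_iu$ is bounded and supported in a compact subset of $E$, integrating against $dm^{(-i)}$ and summing over $i$ controls $\EE(u,u)$ by a finite multiple of $\sum_i\int_{K_i}\tt_i'(y_i)^2\,dy_i$ for suitable compacta $K_i\subset J_i$, which is finite by the hypothesis $\mathbf{t}'\in L^2_{\mathrm{loc}}(U)$. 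The slice characterization combined with this finite energy places $u$ in $\FF$.

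The only genuinely non-routine step, and hence the main obstacle, is justifying the product-form description of $(\EE,\FF)$ used above. The identity $\EE(u,u)=\sum_i\int\EE^i\,dm^{(-i)}$ is straightforward to verify on the algebraic tensor-product core $\mathcal{C}_1\otimes\cdots\otimes\mathcal{C}_d$ mentioned in \S\ref{SEC221}, but extending it to the full slice characterization of $\FF$ requires the closability/approximation machinery developed in \cite{O89, O97} (cf.\ also \cite[Proposition~3.1]{LY15}). Once that description is invoked, both directions of the lemma are immediate consequences of Lemma~\ref{LM21}.
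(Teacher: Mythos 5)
Your proposal is correct, but it reaches Lemma~\ref{LM24} by a genuinely different route than the paper. The paper's own proof is a three-line reduction: it first invokes the general fact for strongly local forms (citing \cite[Theorem~5.6.2 and Corollary~5.6.2]{FOT11}) that $C_c^\infty(E)\subset \FF$ is equivalent to each coordinate function $f^i$ lying in $\FF_{\mathrm{loc}}$, then observes that $f^i\in\FF_{\mathrm{loc}}$ holds if and only if the one-dimensional slices $f^i_{\hat{x}_i}$ lie in $\FF^i_{\mathrm{loc}}$, and concludes by the equivalence $\textbf{(f)}\Leftrightarrow\textbf{(t)}$ established in the proof of Lemma~\ref{LM21}. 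You instead work directly with the slice description of the product domain from \cite{O89,O97} (cf.\ \cite[Proposition~3.1]{LY15}), testing with elementary tensors $\phi_1\otimes\cdots\otimes\phi_d$ for necessity and performing an explicit energy estimate for sufficiency. Both arguments rest on a structural input that is cited rather than proved: the paper leans on the chain-rule equivalence from \cite{FOT11} together with the (asserted, not proved) slice criterion for $\FF_{\mathrm{loc}}$, while you lean on the full identification of the product domain --- and, as you rightly flag, you need \emph{both} directions of that identification, since the displayed formula in \S\ref{SEC3} before the paper's proof literally states the energy representation only for $u,v\in\FF$, whereas your sufficiency step requires the converse (slices in $\FF^i$ plus finite total energy imply $u\in\FF$), which is exactly \^Okura's domain theorem. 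What your route buys is a self-contained quantitative bound, $\EE(u,u)$ controlled by $\sum_i\int_{K_i}(\tt_i')^2\,dy$ for $u\in C_c^\infty(E)$, and independence from the coordinate-function machinery; what the paper's route buys is brevity and the avoidance of any energy computation. Two details you should make explicit if writing this up: in the necessity step, concluding that the open set $\{\hat{x}_i:\prod_{j\ne i}\phi_j(x_j)\ne 0\}$ has positive $\hat{m}_i$-measure uses that each speed measure $m_j$ has full support (true in this setting), and in the sufficiency step the factor $\hat{m}_i$ of the compact projection of $\mathrm{supp}[u]$ is finite because each $m_j$ is Radon.
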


Similarly, denote the absolutely continuous part of $\mathbf{s}$ by $\bar{\mathbf{s}}=(\bar{\mathtt{s}}_1,\cdots, \bar{\mathtt{s}}_d)$, i.e. each $\bar{\ss}_i$ is the absolutely continuous part of $\mathtt{s}_i$ (as \eqref{EQ2SXE}). 

\begin{theorem}\label{THM25}
Let $(\EE,\FF)$ be that in Lemma~\ref{LM24}. Assume \eqref{EQ2CCE} is satisfied. Then the smallest closed extension denoted by $\bar{\FF}$ of $C_c^\infty(E)$ in $\FF$ corresponds to the Cartesian product of $\{\bar{\mathbf{X}}^1,\cdots, \bar{\mathbf{X}}^d\}$, where $\bar{\mathbf{X}}^i$ is the minimal diffusion process with scaling function $\bar{\mathtt{s}}_i$ and speed measure $m_i$. Particularly, $C_c^\infty(E)$ is a special standard core of $(\EE,\FF)$ if and only if $\mathbf{s}$ is absolutely continuous. 
\end{theorem}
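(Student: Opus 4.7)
The plan is to reduce the multidimensional statement to coordinate-wise applications of Theorem~\ref{THM21} and then invoke the tensor-product structure of Cartesian-product Dirichlet forms. First I would use Lemma~\ref{LM24} to note that $C_c^\infty(E)\subset\FF$ forces each $\mathtt{t}_i$ to be absolutely continuous with $\mathtt{t}_i'\in L^2_\mathrm{loc}(J_i)$, so by Lemma~\ref{LM21} each factor satisfies $C_c^\infty(I_i)\subset\FF^i$. Applying Theorem~\ref{THM21} coordinate by coordinate, the smallest closed extension $(\bar{\EE}^i,\bar{\FF}^i)$ of $C_c^\infty(I_i)$ inside $\FF^i$ is the regular Dirichlet form of the minimal diffusion $\bar{\mathbf{X}}^i$ with scaling function $\bar{\mathtt{s}}_i$ and speed measure $m_i$. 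Let $(\tilde{\EE},\tilde{\FF})$ denote the Dirichlet form on $L^2(E,m)$ obtained as the Cartesian product of $\{\bar{\mathbf{X}}^1,\ldots,\bar{\mathbf{X}}^d\}$; by the tensor-core result for Cartesian products recalled in \S\ref{SEC221} (\^Okura's theorem together with Proposition~3.1 of \cite{LY15}), the algebraic tensor product
\[
\mathcal{T}:=C_c^\infty(I_1)\otimes\cdots\otimes C_c^\infty(I_d)
\]
is a special standard core of $(\tilde{\EE},\tilde{\FF})$.

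The heart of the proof is to identify $(\bar{\EE},\bar{\FF})$ with $(\tilde{\EE},\tilde{\FF})$. Since $\mathcal{T}\subset C_c^\infty(E)\subset\FF$, the Cartesian-product decomposition of $\EE$ gives, for $u\in\mathcal{T}$ and with $u_{\hat{x}_i}(\cdot):=u(x_1,\ldots,\cdot,\ldots,x_d)$ denoting the $i$-th slice,
\[
\EE(u,u)=\sum_{i=1}^d\int_{\prod_{j\neq i}I_j}\EE^i\bigl(u_{\hat{x}_i},u_{\hat{x}_i}\bigr)\prod_{j\neq i}m_j(dx_j).
\]
Each slice lies in $C_c^\infty(I_i)\subset\bar{\FF}^i$, on which $\EE^i$ and $\bar{\EE}^i$ agree by construction, so $\EE(u,u)=\tilde{\EE}(u,u)$ for every $u\in\mathcal{T}$. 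It remains to prove that $\mathcal{T}$ is $\EE_1$-dense in $C_c^\infty(E)$. For $u\in C_c^\infty(E)$ I would construct tensor sums $u_n\in\mathcal{T}$ supported in a fixed compact neighborhood of $\mathrm{supp}\,u$ (for example via truncated partial Fourier series on a large cube, or via convolutions with product mollifiers) such that $u_n$ together with its first partial derivatives converges uniformly to those of $u$. The substitution $y=\mathtt{s}_i(x)$ shows that $\mathtt{t}_i'\in L^2_\mathrm{loc}(J_i)$ is equivalent to $1/g_i\in L^1_\mathrm{loc}(I_i)$; since $\EE$ restricted to $C_c^\infty(E)$ is a sum of integrals of $(\partial u/\partial x_i)^2$ against the locally finite measures $(1/g_i)\,dx_i\otimes\prod_{j\neq i}dm_j$, uniform $C^1$-convergence on a fixed compact upgrades to $\EE_1$-convergence by dominated convergence. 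Combining this with the energy matching on $\mathcal{T}$ yields $\bar{\FF}=\tilde{\FF}$ with $\bar{\EE}=\tilde{\EE}$, which proves the first assertion.

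The ``particularly'' statement is then immediate: $C_c^\infty(E)$ is a special standard core of $(\EE,\FF)$ iff $\bar{\FF}=\FF$, and the Cartesian-product identification turns this into $\bar{\mathbf{X}}^i=\mathbf{X}^i$ for every $i$, which by Theorem~\ref{THM21} is equivalent to each $\mathtt{s}_i$ being absolutely continuous, i.e.\ to $\mathbf{s}$ being absolutely continuous. The main obstacle I anticipate is precisely the $\EE_1$-density step for $\mathcal{T}$ inside $C_c^\infty(E)$: the $\EE_1$-norm carries the weights $1/g_i$ which are only locally integrable, so one must check carefully that uniform $C^1$-approximation on a fixed compact is strong enough to force $\EE_1$-convergence; this is exactly where the hypothesis $\mathtt{t}_i'\in L^2_\mathrm{loc}(J_i)$ is essentially used.
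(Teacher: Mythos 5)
Your proposal is correct and proves the theorem, but at the decisive step it runs along a genuinely different track from the paper. Both arguments share the same skeleton: reduce to the one-dimensional Theorem~\ref{THM21} coordinate-wise, and use \^Okura's tensor-core result to see that $\mathcal{T}=C_c^\infty(I_1)\otimes\cdots\otimes C_c^\infty(I_d)$ is a core of the Cartesian product form $(\tilde{\EE},\tilde{\FF})$ of the $\bar{\mathbf{X}}^i$; since $\mathcal{T}\subset C_c^\infty(E)$, this gives $\tilde{\FF}\subset\bar{\FF}$ in both treatments. The difference is the reverse inclusion. The paper does no approximation at all: it invokes Lemma~\ref{LM32} (each $\bar{\tt}_i$ is absolutely continuous with $\bar{\tt}_i'\in L^2_\mathrm{loc}(\bar{J}_i)$) and then applies the characterization Lemma~\ref{LM24} \emph{to the product of the $\bar{\mathbf{X}}^i$} to conclude $C_c^\infty(E)\subset\tilde{\FF}$ directly, whence $\bar{\FF}\subset\tilde{\FF}$ because the product form is a Dirichlet subspace of $(\EE,\FF)$. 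You instead prove by hand that $\mathcal{T}$ is $\EE_1$-dense in $C_c^\infty(E)$, which yields both inclusions at once and bypasses Lemma~\ref{LM32} entirely; this is more self-contained and makes explicit where $\tt_i'\in L^2_\mathrm{loc}(J_i)$ enters, namely in the local finiteness of the measures $\tilde{m}_i(dx_i)=(\tt_i'\circ\ss_i)(x_i)\,dx_i$. Your anticipated obstacle in fact dissolves: since all approximants are supported in one fixed compact set and the measures $\tilde{m}_i\otimes\hat{m}_i$ and $m$ are Radon, uniform $C^1$-convergence there upgrades to $\EE_1$-convergence by the trivial bound $\EE(u_n-u,u_n-u)\leq \tfrac{1}{2}\sum_i \|\partial_i(u_n-u)\|_\infty^2\,(\tilde{m}_i\otimes\hat{m}_i)(Q)$, no domination needed. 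Two small repairs: first, the parenthetical ``convolutions with product mollifiers'' does not work as stated, because convolving a non-tensor $u$ with a tensor kernel does not produce a finite tensor sum; your Fourier route does work (the periodic extension of $u$ from a slightly larger cube is smooth, rectangular partial sums converge in every $C^k$ by rapid coefficient decay, each exponential factors across coordinates, and multiplying by a fixed tensor cutoff $\chi_1\otimes\cdots\otimes\chi_d$ equal to $1$ on $\mathrm{supp}\,u$ lands in $\mathcal{T}$). Second, when ``combining'' to get $\bar{\FF}=\tilde{\FF}$ with $\bar{\EE}=\tilde{\EE}$, you should record the standard completion argument: an $\EE_1$-Cauchy sequence in $\mathcal{T}$ converges in both closed forms, the limits coincide in $L^2(E,m)$, and the form values pass to the limit since $\EE=\tilde{\EE}$ on $\mathcal{T}$ by polarization. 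These are minor; the approach is sound, and compared with the paper's it trades brevity (the paper's inclusion is two lines given Lemmas~\ref{LM32} and~\ref{LM24}) for a constructive, hypothesis-transparent argument.
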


Roughly speaking, the smallest closed extension $\bar{\FF}$ of $C_c^\infty(E)$ in $\FF$ inherits the absolutely continuous part $\bar{\mathbf{s}}$ of $\mathbf{s}$, which is similar to the one-dimensional case of Theorem~\ref{THM21}. 

\subsubsection{Skew product}\label{SEC222}

The skew product of two Markov processes was first raised by Galmarino in \cite{G62} when he investigated the isotropic diffusion process on $\mathbb{R}^3$. His main result indicates that every isotropic diffusion process may be written as the form of
\[
	(r_t, \vartheta_{A_t})_{t\geq 0},
\]
where $r=(r_t)_{t\geq 0}$ is the radius part of the process, and $\vartheta$ is  an independent (of $r$) spherical Brownian motion that runs with a clock $(A_t)_{t\geq 0}$ depending on the radial path. Mathematically, $A=(A_t)_{t\geq 0}$ is a positive continuous additive functional (PCAF in abbreviation) of $r$. It\^o and McKean also introduced this idea in their masterpiece \cite{IM74}. Then Fukushima and Oshima in \cite{FO89} constructed the associated Dirichlet form of skew product of two symmetric diffusion processes, and \^Okura in \cite{O89, O97} extended their representation to general situations. Based on the results of \^Okura, one of the authors in \cite{LY15-2} studied the regular Dirichlet subspaces of skew product of two Markov processes. 

Now we consider the state space $E=I\times S^{d-1}$,  where $I=(a,b)$ is an open subinterval of $(0,\infty)$ and $S^{d-1}$ is the surface of $d$-dimensional unit ball with $d\geq 2$. Clearly $E$ is a domain in $\mathbb{R}^d$. Let $\mathbf{X}=(X_t)_{t\geq 0}$ be a diffusion process on $E$ which enjoys the form
\[
	X_t=(r_t, \vartheta_{A_t}),\quad t\geq 0,
\]
where $r=(r_t)_{t\geq 0}$ is a minimal diffusion process on $I$, $\vartheta$ is  a spherical Brownian motion on $S^{d-1}$ that independent of $r$, and $A=(A_t)_{t\geq 0}$ is a PCAF of $r$ whose Revuz measure, denoted by $\mu_A$, is Radon on $I$. Note that the boundary $\{a,b\}\times S^{d-1}$ of $E$ is the ``trap" of $\mathbf{X}$. Naturally, $\mathbf{X}$ is isotropic in the sense that if $\varphi$ is a rotation on $E$ (i.e. the polar coordinate representation of $\varphi$ is $(\rho, {\theta})\mapsto ( \rho, {\theta}+{\alpha})$ for a fixed ${\alpha}$), then $\varphi(\mathbf{X})=\mathbf{X}$ in distribution.  Particularly, if $I=(0,\infty)$, then $\mathbf{X}$ is exactly the isotropic diffusion process in \cite{G62} (with a little more restriction on the ``clock"). 


Let $\mathtt{s}$ and $m_0$ be the scaling function and speed measure of $r$. Denote the inverse function of $\mathtt{s}$ by $\mathtt{t}$. 
 Further let $\sigma$ be the uniform distribution on $S^{d-1}$. Then $r$ is $m_0$-symmetric and $\vartheta$ is $\sigma$-symmetric, thus $\mathbf{X}$ is $m$-symmetric (Cf. \cite[Proposition~3.1]{FO89}), where $m(dx)=m_0(d\rho)\sigma(d\omega)$ with $x=(\rho,\omega)$ ($\rho\in I, \omega\in S^{d-1})$. Furthermore, the associated Dirichlet form $(\EE,\FF)$ of $\mathbf{X}$ is regular on $L^2(E,m)$ and admits the following expression:  for any $u\in \FF\cap C_c(E)$, 
\begin{equation}\label{EQ22OMU}
\begin{aligned}
	&[\omega \mapsto u(\cdot, \omega)\in \FF^1] \in L^2(S^{d-1},\sigma;\FF^1),\\
	&[\rho \mapsto u(\rho,\cdot)\in \FF^2] \in L^2(I,m_0;\FF^2)
\end{aligned}	
\end{equation}
and 
\[
	\EE(u,u)=\int_{S^{d-1}}\EE^1(u(\cdot, \omega),u(\cdot,\omega))\sigma(d\omega)+\int_{I}\EE^2(u(\rho,\cdot),u(\rho,\cdot))\mu_A(d\rho),
\]
where $(\EE^1,\FF^1)$ is the associated Dirichlet form of $r$ on $L^2(I,m_0)$, and $(\EE^2,\FF^2)$ is that of $\vartheta$ on $L^2(S^{d-1},\sigma)$. Note that $L^2(M,\xi;\mathbf{H})$ means the real $L^2$-space of $\mathbf{H}$-valued functions, where $M=S^{d-1}$ or $I$, $\xi=\sigma$ or $m_0$, and $\mathbf{H}=\FF^i$ with the inner product $\EE^i_1(\cdot,\cdot)$ for $i=1$ or $2$.
We refer more details about $(\EE,\FF)$ to \cite{FO89, O89, O97} and also \cite{LY15-2}. 

\begin{lemma}\label{LM26}
Let $(\EE,\FF)$ be the associated Dirichlet form on $L^2(E,m)$ of skew product diffusion process $\mathbf{X}$ given above.  Then $C_c^\infty(E)\subset \FF$ if and only if $\mathtt{t}$ is absolutely continuous and $\mathtt{t}'\in L^2_\mathrm{loc}(J)$, where $J:=\mathtt{s}(I)$. 
\end{lemma}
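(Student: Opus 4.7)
The strategy is to transfer the problem back to the one-dimensional setting and invoke Lemma~\ref{LM21}. The key ingredient is the sliced energy representation \eqref{EQ22OMU} and the fact that $S^{d-1}$ is compact, so that $C_c^\infty(I)\otimes C^\infty(S^{d-1})\subset C_c^\infty(E)$.

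\emph{Necessity.} Assume $C_c^\infty(E)\subset \FF$. For any $\varphi\in C_c^\infty(I)$, put $u(\rho,\omega):=\varphi(\rho)$. Since $S^{d-1}$ is compact, $u\in C_c^\infty(E)\subset\FF$. Applying \eqref{EQ22OMU} to $u$ (which a posteriori lies in $\FF\cap C_c(E)$) tells us that $u(\cdot,\omega)=\varphi\in\FF^1$ for $\sigma$-a.e.\ $\omega$, hence $\varphi\in\FF^1$. Thus $C_c^\infty(I)\subset\FF^1$, and Lemma~\ref{LM21}, applied to the one-dimensional Dirichlet form $(\EE^1,\FF^1)$ with scaling function $\ss$, delivers that $\mathtt{t}$ is absolutely continuous and $\mathtt{t}'\in L^2_\mathrm{loc}(J)$.

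\emph{Sufficiency.} Assume $\mathtt{t}$ is absolutely continuous with $\mathtt{t}'\in L^2_\mathrm{loc}(J)$. Lemma~\ref{LM21} yields $C_c^\infty(I)\subset\FF^1$, and since $\vartheta$ is the spherical Brownian motion, $C^\infty(S^{d-1})\subset\FF^2=H^1(S^{d-1})$. Fix $u\in C_c^\infty(E)$ and let $K\subset I$ be compact with $\mathrm{supp}(u)\subset K\times S^{d-1}$. For $\sigma$-a.e.\ $\omega$, Lemma~\ref{LM21} gives $u(\cdot,\omega)\in\FF^1$, and a change of variables $y=\ss(\rho)$ yields
\[
\EE^1\bigl(u(\cdot,\omega),u(\cdot,\omega)\bigr)=\tfrac12\int_{\ss(K)}\bigl|\partial_\rho u(\mathtt{t}(y),\omega)\bigr|^2\mathtt{t}'(y)^2\,dy\le \tfrac{C^2}{2}\int_{\ss(K)}\mathtt{t}'(y)^2\,dy,
\]
where $C$ bounds $|\partial_\rho u|$. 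Integrating over $\omega$ uses $\sigma(S^{d-1})<\infty$ and $\mathtt{t}'\in L^2_\mathrm{loc}(J)$. Similarly, $u(\rho,\cdot)\in C^\infty(S^{d-1})\subset\FF^2$ for every $\rho$, with $\EE^2(u(\rho,\cdot),u(\rho,\cdot))$ bounded uniformly in $\rho\in K$; since $\mu_A$ is Radon, integrating against $\mu_A(d\rho)$ on $K$ is finite. Thus both integrability conditions in \eqref{EQ22OMU} hold and $\EE(u,u)<\infty$.

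To upgrade ``finite energy plus sliced membership'' to genuine membership $u\in\FF$, I would appeal to the characterization of the skew-product Dirichlet form in \cite{FO89,O89,O97}: the space $\FF$ is precisely the closure, relative to $\|\cdot\|_{\EE_1}$, of the tensor-product algebra $\mathcal{C}_1\otimes\mathcal{C}_2$ for cores $\mathcal{C}_i$ of $\FF^i$. Taking $\mathcal{C}_1=C_c^\infty(I)$ and $\mathcal{C}_2=C^\infty(S^{d-1})$, one approximates $u\in C_c^\infty(E)$ by partial sums of its spherical-harmonic (or Fourier) expansion in $\omega$, whose coefficients lie in $C_c^\infty(I)$; the smoothness of $u$ together with the above bounds ensures $\EE_1$-convergence, so $u\in\FF$.

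\emph{Main obstacle.} The only subtlety is the last step: extracting $u\in\FF$ from the sliced conditions rather than merely $\EE(u,u)<\infty$. This is where the tensor-product core structure (from \cite{FO89,O89,O97}) must be invoked carefully, paralleling the role that the tensor-product core played in Lemma~\ref{LM24} for the Cartesian case.
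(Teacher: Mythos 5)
Your necessity argument coincides with the paper's: slice via \eqref{EQ22OMU} to get $C_c^\infty(I)\subset\FF^1$, then apply Lemma~\ref{LM21}. The sufficiency, however, rests on a claim that is false in exactly the regime this paper cares about. You assert that $\FF$ is \emph{precisely} the $\EE_1$-closure of $\mathcal{C}_1\otimes\mathcal{C}_2$ ``for cores $\mathcal{C}_i$ of $\FF^i$,'' and then take $\mathcal{C}_1=C_c^\infty(I)$. But under the hypotheses of the lemma you only know $C_c^\infty(I)\subset\FF^1$, not that it is a core of $(\EE^1,\FF^1)$: by Theorem~\ref{THM21}, $C_c^\infty(I)$ is a core of $\FF^1$ if and only if $\ss$ is absolutely continuous, which is not assumed (and is explicitly violated under \textbf{(H2)}). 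When $\ss$ fails to be absolutely continuous, the $\EE_1$-closure of $C_c^\infty(I)\otimes C^\infty(S^{d-1})$ is a \emph{proper} regular Dirichlet subspace of $\FF$ --- this is precisely the content of Theorem~\ref{THM27}, where that closure is identified as the form $\hat\FF$ of the skew product $\bar X_t=(\bar r_t,\vartheta_{\bar A_t})$ built from the absolutely continuous part $\bar\ss$. So the identity you invoke from \cite{FO89,O89,O97} (tensor products of cores form a core) does not apply with $\mathcal{C}_1=C_c^\infty(I)$; a genuine core of $\FF^1$ would be $C_c^\infty\circ\ss$, whose tensor products do not obviously approximate elements of $C_c^\infty(E)$.

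The gap is repairable, because the conclusion $C_c^\infty(E)\subset\FF$ only needs the one-sided containment ``closure of the tensor algebra $\subset\FF$,'' which does hold: the tensor algebra lies in $\FF$ (since $C_c^\infty(I)\subset\FF^1$ and $\FF^1\otimes\FF^2\subset\FF$) and $(\EE,\FF)$ is closed. Your spherical-harmonic expansion can then finish the proof, provided you actually establish that the partial sums $S_N u$ are $\EE_1$-Cauchy (rapid decay of the coefficients of a smooth, compactly supported $u$ in $C^1$-norms, combined with your sliced bounds involving $\int_{\ss(K)}\tt'(y)^2\,dy$ and $\mu_A(K)$, does this) and converge to $u$ in $L^2(m)$; closedness then yields $u\in\FF$. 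Note also that, as you rightly flag, \eqref{EQ22OMU} is stated only as a property of elements of $\FF\cap C_c(E)$, so your finiteness computation alone proves nothing about membership. The paper avoids the expansion altogether: it introduces the auxiliary regular form $(\hat\EE,\hat\FF)$ of \eqref{EQ22XTR}, cites \cite{FO89,O89} to see that \emph{both} $C_c^\infty(I)\otimes C^\infty(S^{d-1})$ and $C_c^\infty(E)$ are cores of $(\hat\EE,\hat\FF)$ (legitimate there, since $\bar\ss$ is absolutely continuous), observes $\EE=\hat\EE$ on the tensor algebra, and concludes $C_c^\infty(E)\subset\hat\FF\subset\FF$. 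That detour through $\hat\FF$ is exactly the mechanism that replaces your erroneous identification, and it is the cleaner route; your harmonic-expansion alternative is viable but requires the Cauchy estimates you have only sketched.
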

 
Similarly, let $\bar{\mathtt{s}}$ be the absolutely continuous part of $(r_t)_{t\geq 0}$'s scaling function $\mathtt{s}$, and $\bar{r}=(\bar{r}_t)_{t\geq 0}$ the minimal diffusion process on $I$ with scaling function $\bar{\mathtt{s}}$ and speed measure $m_0$. Intuitively, $\bar{r}$ is the absolutely continuous part of $r$. Next, we may state another analogous result of Theorem~\ref{THM21} about the skew product outlined above.

\begin{theorem}\label{THM27}
Let $(\EE,\FF)$ be the Dirichlet form in Lemma~\ref{LM26} and assume $C_c^\infty(E)\subset \FF$. Then the smallest closed extension, denoted by $\bar{\FF}$, of $C_c^\infty(E)$ in $\FF$ is the associated Dirichlet space of skew product diffusion process
\begin{equation}\label{EQ22XTR}
	\bar{X}_t=(\bar{r}_t,\vartheta_{\bar{A}_t}),\quad t\geq 0,
\end{equation}
where $\bar{A}$ is a PCAF of $\bar{r}$ such that its Revuz measure $\mu_{\bar{A}}$ equals $\mu_A$, i.e. $\mu_{\bar{A}}=\mu_A$. Particularly, $C_c^\infty(E)$ is a special standard core of $(\EE,\FF)$ if and only if $\mathtt{s}$ is absolutely continuous. 
\end{theorem}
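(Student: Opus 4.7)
The plan is to parallel the Cartesian-product argument of Theorem~\ref{THM25}, using the Okura representation \eqref{EQ22OMU} and applying the one-dimensional Theorem~\ref{THM21} to the radial part $r$. Let $(\tilde{\EE},\tilde{\FF})$ denote the Dirichlet form on $L^2(E,m)$ associated with the candidate skew product $\bar{X}_t=(\bar{r}_t,\vartheta_{\bar{A}_t})$. Since $\mu_{\bar{A}}=\mu_A$, Okura's construction produces the same tensorial formula as \eqref{EQ22OMU}, but with $(\EE^1,\FF^1)$ replaced by the Dirichlet form $(\bar{\EE}^1,\bar{\FF}^1)$ of $\bar{r}$. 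By Theorem~\ref{THM21}, $(\bar{\EE}^1,\bar{\FF}^1)$ has scaling function $\bar{\ss}$, contains $C_c^\infty(I)$ as a special standard core, and satisfies $\bar{\EE}^1=\EE^1$ on $C_c^\infty(I)$.

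For the forward inclusion $\bar{\FF}\subset\tilde{\FF}$ with $\tilde{\EE}=\bar{\EE}$ on $\bar{\FF}$, take any $u\in C_c^\infty(E)$: each section $u(\cdot,\omega)$ lies in $C_c^\infty(I)\subset \bar{\FF}^1$ and each $u(\rho,\cdot)$ in $C^\infty(S^{d-1})\subset \FF^2$, while the Bochner-integrability requirements in \eqref{EQ22OMU} follow from the compact support of $u$. The two quadratic forms $\EE$ and $\tilde{\EE}$ therefore coincide on $C_c^\infty(E)$, and passing to the $\EE_1^{1/2}$-closure delivers $\bar{\FF}\subset \tilde{\FF}$ together with the agreement of forms.

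The reverse inclusion $\tilde{\FF}\subset\bar{\FF}$ is the principal obstacle; it amounts to showing that $C_c^\infty(E)$ is a core of $(\tilde{\EE},\tilde{\FF})$. My approach is to reduce first to simple tensors: by the Hilbert-space structure of the Bochner spaces $L^2(S^{d-1},\sigma;\bar{\FF}^1)$ and $L^2(I,m_0;\FF^2)$, algebraic sums $\sum_k \varphi_k(\rho)\psi_k(\omega)$ with $\varphi_k\in \bar{\FF}^1$ and $\psi_k\in \FF^2$ are $\tilde{\EE}_1^{1/2}$-dense in $\tilde{\FF}$. Theorem~\ref{THM21} then permits approximation of $\varphi_k$ in $\bar{\EE}^1_1$-norm by elements of $C_c^\infty(I)$, while spherical harmonics form a core of the spherical Laplace--Beltrami form, so $\psi_k$ can be approximated by $C^\infty(S^{d-1})$-functions. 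Each product $\varphi\cdot\psi$ sits in $C_c^\infty(E)$ (its support being compact in $E$), and a standard diagonal-approximation argument closes the density claim.

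Finally, the ``in particular'' clause follows immediately: $C_c^\infty(E)$ is a special standard core of $(\EE,\FF)$ precisely when $\bar{\FF}=\FF$, equivalently when $\bar{X}\overset{d}{=}X$. Since both skew products share the angular component $\vartheta$ and the time-change measure $\mu_A$, this collapses to $\bar{r}\overset{d}{=}r$, which by Theorem~\ref{THM21} holds if and only if $\bar{\ss}=\ss$, i.e.\ $\ss$ is absolutely continuous.
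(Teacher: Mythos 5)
Your overall architecture coincides with the paper's. The paper disposes of Theorem~\ref{THM27} by noting (in the proof of Lemma~\ref{LM26}) that, by \cite[Theorem~1.1]{FO89} and \cite[Theorem~1.4]{O89}, both $C_c^\infty(I)\otimes C^\infty(S^{d-1})$ and $C_c^\infty(E)$ are cores of the skew-product form $(\hat{\EE},\hat{\FF})$ attached to \eqref{EQ22XTR}, that $\EE=\hat{\EE}$ on the tensor class (by Theorem~\ref{THM21} applied to the radial part, exactly as you argue), and that the identification $\bar{\FF}=\hat{\FF}$ is then immediate; your forward inclusion and your treatment of the ``in particular'' clause reproduce this faithfully and are fine.

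The genuine soft spot is in your reverse inclusion, where you replace the citations by a sketch. First, the $\tilde{\EE}_1^{1/2}$-density of algebraic tensor sums $\sum_k\varphi_k\otimes\psi_k$ in $\tilde{\FF}$ does \emph{not} follow from ``the Hilbert-space structure of the Bochner spaces'': $\tilde{\FF}$ is cut out by two simultaneous Bochner conditions as in \eqref{EQ22OMU}, and the form is a sum of two integrals, the second weighted by $\mu_A$ rather than $m_0$; density of simple tensors in each Bochner space separately says nothing about density in the intersection for the sum norm. This is precisely the nontrivial content of \^Okura's theorem that the paper invokes, so you must either cite it or prove it. Second, your diagonal step hides a real issue: if $\phi_{k,n}\in C_c^\infty(I)$ approximates $\varphi_k$ in $\bar{\EE}^1_1$-norm, the angular term of the skew-product form contributes $\EE^2(\psi_k,\psi_k)\int_I|\varphi_k-\phi_{k,n}|^2\,d\mu_A$, and since $\mu_A$ is merely a Radon measure on $I$, possibly singular with respect to $m_0$, convergence in $\bar{\EE}^1_1$ (that is, $L^2(m_0)$ plus energy) does not by itself give convergence in $L^2(\mu_A)$. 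In this one-dimensional setting the step can be repaired by the local sup-norm estimate $\sup_{x\in K}|v(x)|^2\leq C_K\,\bar{\EE}^1_1(v,v)$ (see \cite[(2.2.32)]{CF12}, used in the same way in the paper's proof of Lemma~\ref{LM42}), together with a cutoff keeping the approximants supported in a fixed compact $K$, whence $\int_I|\varphi_k-\phi_{k,n}|^2\,d\mu_A\leq\mu_A(K)\sup_{K}|\varphi_k-\phi_{k,n}|^2\rightarrow 0$. A related minor caution: the paper states \eqref{EQ22OMU} only as a property of $u\in\FF\cap C_c(E)$, not as a membership criterion, so your forward inclusion too ultimately rests on the \^Okura--Fukushima--Oshima description of the skew-product domain. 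With those citations (or the indicated repairs) in place, your proof closes and is then essentially the paper's.
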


\subsection{Multi-dimensional cases: energy forms}\label{SEC23}

One may expect to extend the characterizations in \S\ref{SEC21} and \S\ref{SEC22} to general diffusion processes on an arbitrary domain of $\mathbb{R}^d$. But this is difficult because we do not have a complete description of multi-dimensional diffusion process like the scaling function and speed measure of one-dimensional case.  The Cartesian product and skew product are two exceptions, since they are indeed achieved by transforms of one-dimensional diffusions. Nevertheless, if we force the diffusion process to have a Brownian motion as its martingale part in the Fukushima's decomposition \eqref{EQ1XTX} besides the condition \eqref{EQ2CCE}, then we may also tell some interesting stories. 

Assume $E=U$, a domain of $\mathbb{R}^d$ with $d\geq 2$, and $m$ is a Radon measure fully supported on $U$. The process $\mathbf{X}=(X_t)_{t\geq 0}$ is an absorbing $m$-symmetric diffusion process without killing inside on $U$, whose associated Dirichlet form $(\EE,\FF)$ is regular (and obviously strongly local) on $L^2(U,m)$. The word ``absorbing" means $U^c$ is the trap of $\mathbf{X}$. Further assume our elementary assumption:
\[
	C_c^\infty(U)\subset \FF. 
\]
Since this condition implies each $i$th coordinate function $f^i\in \FF_\mathrm{loc}$ (i.e. $f^i(x)=x_i$ for $x=(x_1,\cdots, x_d)\in U$) with $1\leq i\leq d$, it follows that $\mathbf{X}$ enjoys the Fukushima's decomposition with resepct to $f:=(f^1,\cdots, f^d)$: 
\begin{equation}\label{EQ23FXT}
f(X_t)-f(X_0)=M^{[f]}_t+N^{[f]}_t,\quad t\geq 0,
\end{equation}
where $M^{[f]}=\left(M^{[f^1]}, \cdots, M^{[f^d]} \right)$ and $N^{[f]}=\left(N^{[f^1]}, \cdots, N^{[f^d]} \right)$ are the martingale part locally of finite energy and locally zero-energy part respectively. We say $M^{[f]}$ is equivalent to a Brownian motion if for q.e. $x\in U$, $M^{[f]}$ is equivalent to a standard Brownian motion (the sample path space is endowed with the measure $\mathbf{P}^x$) before $\zeta$, where $\zeta$ is the life time of $\mathbf{X}$. 

\begin{lemma}\label{LM28}
Let $(\EE,\FF)$ be the regular and strongly local Dirichlet form on $L^2(U,m)$, whose associated diffusion process is $\mathbf{X}$. Assume $C_c^\infty(U)\subset \FF$. Then the martingale part $M^{[f]}$ in \eqref{EQ23FXT} is a Brownian motion, if and only if for any $u,v\in C_c^\infty(U)$, 
\begin{equation}\label{EQ23EUV}
	\EE(u,v)=\frac{1}{2}\int_U \nabla u(x)\cdot \nabla v(x) m(dx).
\end{equation}
\end{lemma}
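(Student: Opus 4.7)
The plan is to identify, on both sides of the equivalence, the joint energy measures $\mu_{\langle f^i, f^j\rangle}$ of the coordinate functions, and to show that each side is equivalent to
\[
	\mu_{\langle f^i, f^j\rangle}(dx) = \delta_{ij}\, m(dx),\qquad 1\leq i,j\leq d.
\]
The bridge between the analytic description of $\EE$ on $C_c^\infty(U)$ and the probabilistic description of $M^{[f]}$ will be the Revuz correspondence together with L\'evy's characterization of Brownian motion, while the passage from $C_c^\infty(U)$ to the coordinate functions is handled by strong locality.

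For the direct implication, suppose $M^{[f]}$ is (equivalent to) a standard Brownian motion, so $\langle M^{[f^i]}, M^{[f^j]}\rangle_t = \delta_{ij}\,t$ quasi-everywhere. Since the PCAF $A_t = t$ has Revuz measure $m$, the Revuz correspondence yields $\mu_{\langle f^i, f^j\rangle} = \delta_{ij}\, m$ on $U$. For $u\in C_c^\infty(U)$, the chain rule for energy measures of a strongly local form, applied to $u(x) = u(f^1(x), \ldots, f^d(x))$, then gives
\[
	\mu_{\langle u\rangle}(dx) = \sum_{i,j=1}^d \partial_i u(x)\,\partial_j u(x)\,\mu_{\langle f^i, f^j\rangle}(dx) = |\nabla u(x)|^2\,m(dx),
\]
whence $\EE(u,u) = \tfrac12\mu_{\langle u\rangle}(U) = \tfrac12\int_U |\nabla u|^2\,dm$, and polarization produces the announced formula for $\EE(u,v)$.

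For the converse, I would first use the derivation identity
\[
	\mu_{\langle u,v\rangle}(h) = \EE(uh,v) + \EE(vh,u) - \EE(uv,h),\qquad u,v,h\in C_c^\infty(U),
\]
together with the hypothesis; a direct expansion of the three gradients shows that $\mu_{\langle u,v\rangle}(dx) = \nabla u\cdot \nabla v\, m(dx)$ on $U$ for all $u,v\in C_c^\infty(U)$. To transfer this identity to the coordinate functions $f^i\in \FF_\mathrm{loc}$, fix a relatively compact open set $V\Subset U$ and choose $\eta\in C_c^\infty(U)$ with $\eta\equiv 1$ on a neighborhood of $\overline V$. Then $\eta f^i\in C_c^\infty(U)$, and applying the previous identity to $\eta f^i$ and $\eta f^j$ gives $\mu_{\langle \eta f^i, \eta f^j\rangle} = \nabla(\eta f^i)\cdot \nabla(\eta f^j)\, m$, which restricts to $\delta_{ij}\, m$ on $V$. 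By strong locality of $(\EE,\FF)$, on $V$ the energy measure $\mu_{\langle \eta f^i, \eta f^j\rangle}$ agrees with $\mu_{\langle f^i, f^j\rangle}$, so $\mu_{\langle f^i,f^j\rangle}|_V = \delta_{ij}\, m|_V$. Exhausting $U$ by such $V$ yields the identity globally, and the Revuz correspondence then delivers $\langle M^{[f^i]}, M^{[f^j]}\rangle_t = \delta_{ij}\,t$. L\'evy's characterization theorem concludes that $M^{[f]}$ is a standard $d$-dimensional Brownian motion up to the lifetime $\zeta$.

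The main technical obstacle is precisely this passage from the smooth class $C_c^\infty(U)$ to the coordinate functions $f^i$, which are not in $\FF$ in general: it must be bypassed using cut-offs together with the strong locality of the energy measures. A secondary delicate point is to keep track of the distinction between $m$-a.e.\ and q.e.\ statements, which is the correct regime both for Fukushima's decomposition and for the identification of the MAFs associated with the coordinates.
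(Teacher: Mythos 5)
Your proposal is correct and takes essentially the same route as the paper: the paper does not write out a proof but defers to the corollary in the appendix of \cite{Fu82} (the case $U=\mathbb{R}^d$), and that argument is precisely your reduction of both sides to the identity $\mu_{\langle f^i,f^j\rangle}=\delta_{ij}\,m$ via the derivation formula for energy measures, the Revuz correspondence, cut-off functions with strong locality to pass from $C_c^\infty(U)$ to the coordinates $f^i\in\FF_\mathrm{loc}$, and L\'evy's characterization before the lifetime $\zeta$. No gap to report.
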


Note that Fukushima in \cite{Fu82} (the corollary in its appendix) already proved a special case, i.e. $U=\mathbb{R}^d$. The proof of Lemma~\ref{LM28} is essentially the same. We also point out the relevant discussion may retrospect to Beurling and Deny's second formula in \cite{BD59}. 
On the other hand, the form \eqref{EQ23EUV} is usually named the energy form (such as \cite{Fu85}). If additionally $C_c^\infty(U)$ is a special standard core of $(\EE,\FF)$, then $\mathbf{X}$ is also called an (absorbing) distorted Brownian motion on $U$.

It is well known that $N^{[f]}$ is not necessarily of bounded variation. We refer some detailed studies about additive functionals of bounded variation to \cite[\S5.4]{FOT11}. Note that if the zero-energy part in Fukushima's decomposition is of bounded variation, then Fukushima's decomposition  coincides with the semi-martingale decomposition. Intuitively, that $N^{[f]}$ is of bounded variation indicates a ``good" regularity of the sample path of non-martingale part of $\mathbf{X}$. Furthermore, when $N^{[f^i]}$ is of bounded variation, it may be written as the difference of two PCAFs. Let $\mu_1$ and $\mu_2$ be the associated Revuz measures of these two PCAFs. Then 
\[
	\mu_{N^{[f^i]}}:=\mu_1-\mu_2
\]
is called the smooth signed measure of $N^{[f^i]}$. 
The following theorem states the equivalence between this fact and the denseness of $C_c^\infty(U)$ in $\FF$. 

\begin{theorem}\label{THM29}
Let $(\EE,\FF)$ be in Lemma~\ref{LM28}. Assume $C_c^\infty(U)\subset \FF$ and $M^{[f]}$ in \eqref{EQ23FXT} is equivalent to a Brownian motion. Assume further $m(dx)=\rho(x)dx$ with a strictly positive $C^\infty$-density function $\rho$. Then the following assertions are equivalent:
\begin{description}
\item[(1)] $C_c^\infty(U)$ is a special standard core of $(\EE,\FF)$;
\item[(2)] $N^{[f]}$ in \eqref{EQ23FXT} is of bounded variation, and each $\mu_{N^{[f^i]}}$ is Radon; 
\item[(3)] $C_c^\infty(U)\subset \mathcal{D}(\mathcal{L})$, where $\mathcal{L}$ is the associated self-adjoint operator of $(\EE,\FF)$ and $\mathcal{D}(\mathcal{L})$ is its domain.
\end{description}
\end{theorem}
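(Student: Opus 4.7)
The plan is to prove the cyclic chain $(1)\Rightarrow(3)\Rightarrow(2)\Rightarrow(1)$, using Lemma~\ref{LM28} throughout to replace $\EE$ on $C_c^\infty(U)$ by the classical energy form, and relying on the Fukushima-decomposition machinery to be developed in \S\ref{SEC4}.

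For $(1)\Rightarrow(3)$, since $C_c^\infty(U)$ is a core, for $u,v\in C_c^\infty(U)$ Lemma~\ref{LM28} gives $\EE(u,v)=\tfrac{1}{2}\int_U\nabla u\cdot\nabla v\,\rho\,dx$. Because $\rho\in C^\infty$ is strictly positive, one integration by parts rewrites this as $-(Lu,v)_m$ with $Lu:=\tfrac{1}{2}\Delta u+\tfrac{1}{2}\nabla\log\rho\cdot\nabla u\in C_c^\infty(U)\subset L^2(m)$; density of $C_c^\infty(U)$ in $\FF$ extends the identity to all $v\in\FF$, so $u\in\mathcal{D}(\mathcal{L})$ with $\mathcal{L}u=Lu$. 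For $(3)\Rightarrow(2)$, choose a compact exhaustion $K_n\uparrow U$ and $\chi_n\in C_c^\infty(U)$ with $\chi_n\equiv 1$ on $K_n$; then $u_{n,i}:=\chi_n f^i\in C_c^\infty(U)\subset\mathcal{D}(\mathcal{L})$, hence $N^{[u_{n,i}]}_t=\int_0^t\mathcal{L}u_{n,i}(X_s)\,ds$ is of bounded variation with Revuz measure $\mathcal{L}u_{n,i}\cdot m$. Strong locality identifies $N^{[f^i]}$ with $N^{[u_{n,i}]}$ before the exit from $K_n$, and on $K_n$ the formula above yields $\mathcal{L}u_{n,i}=\tfrac{1}{2}\partial_i\log\rho$. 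Passing to the limit gives $\mu_{N^{[f^i]}}=\tfrac{1}{2}\partial_i\rho\,dx$, which is Radon.

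The main obstacle is $(2)\Rightarrow(1)$. By (2) and Lemma~\ref{LM28}, each coordinate is locally a semimartingale with Brownian martingale part, so Ito's formula applies: for $u\in C_c^\infty(U)$,
\[
N^{[u]}_t=\sum_i\int_0^t\partial_i u(X_s)\,dN^{[f^i]}_s+\tfrac{1}{2}\int_0^t\Delta u(X_s)\,ds,
\]
whose Revuz measure equals $\sum_i\partial_i u\cdot\mu_{N^{[f^i]}}+\tfrac{1}{2}\Delta u\cdot m$. Combining the characterization $\EE(u,v)=-\int_U v\,d\mu_{N^{[u]}}$ for $v\in\FF\cap C_c(U)$ (a standard consequence of the theory in \S\ref{SEC4}) with Lemma~\ref{LM28} applied to $v\in C_c^\infty(U)$ produces, after one further integration by parts, the identity
\[
\sum_i\int_U v\,\partial_i u\,d\mu_{N^{[f^i]}}=\tfrac{1}{2}\sum_i\int_U v\,\partial_i u\,\partial_i\rho\,dx,\qquad u,v\in C_c^\infty(U),
\]
which forces $\mu_{N^{[f^i]}}=\tfrac{1}{2}\partial_i\rho\,dx$. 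Once this drift has been identified, both $\mathbf{X}$ and the diffusion $\bar{\mathbf{X}}$ associated with the $\EE_1$-closure $\bar{\FF}$ of $C_c^\infty(U)$ in $\FF$ satisfy the SDE $dY^i_t=dB^i_t+\tfrac{1}{2}\partial_i\log\rho(Y_t)\,dt$ up to exit from $U$, whose coefficients are smooth in the interior; pathwise uniqueness yields $\mathbf{X}\overset{d}{=}\bar{\mathbf{X}}$ and hence $\FF=\bar{\FF}$, which is~(1). The delicate points — globalizing Ito's formula via the cutoffs $\chi_n f^i$ and justifying the bounded-variation characterization in the form needed — should be handled by the localization results of \S\ref{SEC4}.
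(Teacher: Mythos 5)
Your cycle $(1)\Rightarrow(3)\Rightarrow(2)\Rightarrow(1)$ is sound, but it genuinely diverges from the paper, which proves $(1)\Rightarrow(2)$ via Lemma~\ref{LM410}, then $(2)\Rightarrow(3)$ by an energy-measure computation (identifying $\mu_{N^{[f^i]}}=\tfrac{1}{2}\partial_i\rho\,dx$ and using the derivation property of energy measures to get $\EE(h,v)=(-\mathcal{L}_0h,v)_m$ for all $v\in\FF$), and finally $(3)\Rightarrow(1)$ by a purely functional-analytic argument: $\mathcal{L}$ is then a Markovian self-adjoint extension of $\mathcal{L}_0$, so \cite[Theorem~3.3.1]{FOT11} places $\FF$ inside the maximal Sobolev-type space, and on each relatively compact $G$ the two-sided bounds on $\rho$ identify $\bar{\FF}_G$ with $H^1_0(G)$, whence $\FF\subset\bar{\FF}$ by regularity. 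Your replacement for the hard implication $(2)\Rightarrow(1)$ — identify the drift $\mu_{N^{[f^i]}}=\tfrac{1}{2}\partial_i\rho\,dx$, note that both $\mathbf{X}$ and $\bar{\mathbf{X}}$ solve $dY_t=dB_t+\tfrac{1}{2}\nabla\log\rho(Y_t)\,dt$ up to exit from $U$, and invoke pathwise uniqueness for the locally Lipschitz drift — does work, and it is conceptually attractive: it isolates exactly where smoothness of $\rho$ enters, in instructive contrast with \S\ref{SEC24}, where non-regular coefficients permit uncountably many Dirichlet-subspace solutions of one SDE. What the probabilistic route costs, and what you should make explicit, is the supporting machinery the paper's route avoids: the Brownian motion driving $\mathbf{X}$ exists in general only on an enlargement of the probability space (as the paper notes after Theorem~\ref{THM214}); converting pathwise uniqueness into uniqueness in law up to $\zeta$ needs a localized Yamada--Watanabe argument together with the fact that for both processes the lifetime equals $\lim_n\tau_{K_n}$ (absorbing, no killing inside — for $\bar{\mathbf{X}}$ this follows from strong locality of $\bar{\EE}$); and equality of the absorbed laws for q.e.\ (hence $m$-a.e.) starting points must then be passed through the $L^2$-semigroups to conclude $\FF=\bar{\FF}$, the exceptional sets being compatible since $\mathrm{Cap}_{\bar{\EE}}$ dominates $\mathrm{Cap}_{\EE}$. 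Two smaller repairs: in $(3)\Rightarrow(2)$ you cite ``the formula above'' for $\mathcal{L}u_{n,i}$, but that formula was derived under (1); under (3) alone you recover $\mathcal{L}u=\tfrac{1}{2}\Delta u+\tfrac{1}{2}\nabla\log\rho\cdot\nabla u$ by testing against $v\in C_c^\infty(U)$ and using density of $C_c^\infty(U)$ in $L^2(U,m)$, not in $\FF$. Also, in $(2)\Rightarrow(1)$ your It\^o detour is heavier than necessary: the identity $\EE(u,f^i)=-\langle\mu_{N^{[f^i]}},u\rangle$ for $u\in C_c^\infty(U)$, which the paper establishes in the proof of Proposition~\ref{PRO211} by exactly the nest-approximation you defer to \S\ref{SEC4}, gives $\mu_{N^{[f^i]}}=\tfrac{1}{2}\partial_i\rho\,dx$ directly after one integration by parts, with no semimartingale calculus.
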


\begin{remark}
	Under all the conditions of Theorem~\ref{THM29} (even if $\rho\equiv 1$), we may still find some examples, in which the equivalent assertions above are not satisfied (i.e. $C_c^\infty(U)$ is not a special standard core of $(\EE,\FF)$). One simple way is via the Cartesian product illustrated in \S\ref{SEC21}, and we refer it to Example~\ref{EXA55}.
\end{remark}
	
One may doubt the condition on the symmetric measure $m$ is too rigorous, while another proposition below implies the necessity of the absolute continuity of $m$. 

\begin{proposition}\label{PRO211}
Assume $C_c^\infty(U)\subset \FF$ and $M^{[f]}$ in \eqref{EQ23FXT} is equivalent to a Brownian motion. If $N^{[f]}$ is of bounded variation and each $\mu_{N^{[f^i]}}$ is Radon, then $m$ is absolutely continuous with respect to the Lebesgue measure $\lambda_U$ on $U$. 
\end{proposition}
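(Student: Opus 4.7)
My plan is to convert the probabilistic hypotheses into the distributional identity $\partial_i m = 2\mu_{N^{[f^i]}}$ and then deduce absolute continuity by a mollification/compactness argument. First, Lemma~\ref{LM28} forces $(\EE,\FF)$ to coincide on $C_c^\infty(U)$ with the energy form $\tfrac{1}{2}\int_U \nabla u\cdot\nabla v\,dm$. Fix $v\in C_c^\infty(U)$ and a cutoff $\chi\in C_c^\infty(U)$ with $\chi\equiv 1$ on a neighborhood of $\mathrm{supp}(v)$. Then $\chi f^i\in C_c^\infty(U)\subset\FF$, and since $\nabla v$ vanishes off the set $\{\chi\equiv 1\}$, where $\nabla(\chi f^i)=e_i$, the energy form identity evaluates to
\[
\EE(\chi f^i,v)=\tfrac{1}{2}\int_U \partial_i v(x)\,m(dx).
\]

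Separately, because $\chi f^i$ and $f^i$ agree on a neighborhood of $\mathrm{supp}(v)$, strong locality of $(\EE,\FF)$ together with Fukushima's decomposition shows that $N^{[\chi f^i]}$ has the same signed Revuz measure as $N^{[f^i]}$ in that neighborhood. The standard generator/Revuz correspondence (cf.\ \cite[\S5.4]{FOT11}) then gives
\[
\EE(\chi f^i,v)=-\int_U v(x)\,d\mu_{N^{[f^i]}}(x).
\]
Equating the two expressions and letting $v$ range over $C_c^\infty(U)$, I obtain, in the sense of distributions on $U$,
\[
\partial_i m=2\mu_{N^{[f^i]}},\qquad 1\le i\le d.
\]
Since each $\mu_{N^{[f^i]}}$ is a signed Radon measure by hypothesis, every first-order distributional partial derivative of $m$ is itself a Radon measure.

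To close, I would run a standard mollification argument. For $V\Subset U$ and $\epsilon<\mathrm{dist}(V,U^c)$, the smooth convolutions $m_\epsilon:=m*\eta_\epsilon$ and their partials $\partial_i m_\epsilon=(\partial_i m)*\eta_\epsilon$ have $L^1(V)$-norms uniformly controlled by $m(V_\epsilon)$ and $\abs{\mu_{N^{[f^i]}}}(V_\epsilon)$. Hence $\{m_\epsilon\}$ is bounded in $W^{1,1}(V)$, so Rellich--Kondrachov compactness extracts a subsequence converging in $L^1(V)$ to some $g$; distributional convergence $m_\epsilon\to m$ then forces $m$ to be represented on $V$ by $g\,\lambda_V$, and exhausting $U$ by such $V$ gives $m\ll \lambda_U$. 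The main delicate step is the Revuz identity $\EE(\chi f^i,v)=-\int v\,d\mu_{N^{[f^i]}}$: since $f^i$ lies only in $\FF_\mathrm{loc}$, strong locality is needed both to extend the generator formula to the truncated $\chi f^i\in \FF$ and to identify the localized smooth Revuz measures; the remaining BV/Sobolev step is routine.
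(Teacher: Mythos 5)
Your skeleton matches the paper's proof in its first half: both reduce the proposition to the statement that each distributional partial derivative $\partial_i m$ is (twice) the Radon measure $\mu_{N^{[f^i]}}$, and your Step 1 (the energy-form evaluation $\EE(\chi f^i,v)=\frac{1}{2}\int_U \partial_i v\,dm$ via Lemma~\ref{LM28} and the cutoff $\chi$) is fine. The genuine gap is in your Step 2, and the justification you give would fail as stated. Transferring the hypothesis from $N^{[f^i]}$ to $N^{[\chi f^i]}$ and then invoking the generator/Revuz correspondence of \cite[\S 5.4]{FOT11} requires $N^{[\chi f^i]}$ to be a CAF of bounded variation; but locality only controls $N^{[\chi f^i]}$ on $\{\chi\equiv 1\}$ (where it agrees with $N^{[f^i]}$ up to the exit time) and on $\{\chi\equiv 0\}$, while on the transition region $\chi f^i$ is a generic element of $C_c^\infty(U)$, and nothing in the hypotheses makes $N^{[w]}$ of bounded variation for generic $w\in C_c^\infty(U)$ --- that conclusion is of the same strength as assertion \textbf{(3)} of Theorem~\ref{THM29} ($C_c^\infty(U)\subset \mathcal{D}(\mathcal{L})$), which is not available under the hypotheses of Proposition~\ref{PRO211} and can fail even with $\rho\equiv 1$ (Example~\ref{EXA55}). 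The honest route is: by strong locality $\EE(\chi f^i,v)=\EE(f^i,v)$ for $v$ supported in $\{\chi\equiv 1\}$, and then the real work begins, because the local correspondence \cite[Theorem~5.5.4]{FOT11} yields $\EE(f^i,u)=-\langle \mu_{N^{[f^i]}},u\rangle$ only for $u\in\bigcup_{n}\FF_{b,K_n}$ with an \emph{abstract} generalized nest $\{K_n\}$, and an arbitrary $v\in C_c^\infty(U)$ need not belong to this class. The paper closes precisely this hole: for a relatively compact open $G$ containing $\mathrm{supp}[v]$, the family $\{G\cap K_n\}$ is a generalized nest of the part form $(\EE^G,\FF^G)$, so $v$ admits a uniformly bounded, $\EE_1$-convergent and q.e.-convergent approximating sequence from $\bigcup_n \FF_{G\cap K_n}$, and one passes to the limit on both sides using $|\mu_{N^{[f^i]}}|(G)<\infty$ --- this is where the Radon hypothesis is actually consumed. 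Your proposal needs this approximation argument inserted; appealing to strong locality and \S 5.4 does not supply it.

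Your endgame, by contrast, is correct and genuinely different from the paper's. Once $\partial_i m = 2\mu_{N^{[f^i]}}$ is known in the distributional sense, the paper localizes with a cutoff $h$, derives the uniform-norm bound $\left|\int_U \partial_i u\, h\, dm\right|\le C\,\|u\|_u$, and quotes \cite[Lemma~7.2.2.1]{BH91} to conclude $h\cdot m\ll \lambda_U$; you instead prove the underlying fact from scratch --- a Radon measure whose first distributional partials are locally Radon measures is absolutely continuous, with density locally of bounded variation --- by mollification, uniform $W^{1,1}$ bounds on $V\Subset U$, Rellich--Kondrachov compactness, and identification of the limit with $m$. That step is routine but sound (take $V$ to be balls exhausting $U$ so the compact embedding applies), and it buys a self-contained argument in place of the Bouleau--Hirsch citation. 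With the nest-approximation step restored in Step 2, your proof is complete.
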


Furthermore, if the density function $\rho$ of $m$ does not satisfy the assumption in Theorem~\ref{THM29} (even if $\rho$ is smooth almost everywhere except only one point), we may find some counterexample to break the equivalence between \textbf{(1)} and \textbf{(2)}, see Example~\ref{EXA56}.

\subsection{Applications}\label{SEC24}

In this section, we shall introduce some applications of the results above in the theory of Markov processes and stochastic analysis. 

\subsubsection{Characterization of isotropic diffusion processes}\label{SEC241}

The special case, i.e. that $\mathtt{s}$ is absolutely continuous, of  Theorem~\ref{THM21} provides two different special standard core (say $C_c^\infty(I)$ and $C_c^\infty\circ \mathtt{s}$) for the Dirichlet form $(\EE,\FF)$. Thus they offer two useful viewpoints to observe the same diffusion process. By employing this fact, one of the authors in \cite{LY15-2}  characterized the isotropic diffusion processes on $\mathbb{R}^d$ and their associated Dirichlet forms via a very short proof, see \cite[Lemma~4.3]{LY15-2}. In the mean time, the proof of the uniqueness of regular extension raised in \cite[Theorem~4.4]{LY15-2} also relies on the transforms between these two different expressions. We refer more details to relevant studies of \cite{LY15-2}.

\subsubsection{Uncountable solutions of SDEs}\label{SEC242}

In this subsection, let us consider the stochastic differential equation (SDE in abbreviation)
\begin{equation}\label{EQ24DXT}
	dX_t=b(X_t)dt+\sigma(X_t)dB_t
\end{equation}
on $\mathbb{R}^d$ with an integer $d\geq 1$ and $B=(B_t)_{t\geq 0}$ a standard Brownian motion on $\mathbb{R}^d$. It is well known if $\sigma\sigma^T$ is uniformly positive definite, bounded and continuous, and $b$ is bounded Borel measurable, then solution of \eqref{EQ24DXT} exists and is unique (see \cite{IW81}). However, there are very few examples in which the uniqueness breaks. As we know, It\^o and Watanabe in \cite{IW78} raised an example, i.e. $b(x)=3x^{1/3}, \sigma(x)=3x^{2/3}$, and proved it has uncountable strong solutions. But these solutions are all starting from $0$.

Now we shall give a class of coefficients $\{b, \sigma\}$ such that \eqref{EQ24DXT}  has infinite (explicitly, uncountable) different solutions that may start from any place.  
Without loss of generality, we may only consider the case $d=1$, i.e. the state space is $\mathbb{R}$. (In fact, if we replace $\mathbb{R}$ by an open interval $I$, then all the conclusions below are still right.) For multi-dimensional cases, the Cartesian product method is one easy way to give such a vector $b$ and matrix $\sigma$. 

Let $\mathbf{X}$ be an irreducible diffusion process on $\mathbb{R}$ without killing inside. Then $\mathbf{X}$ can be characterized by a scaling function $\mathtt{s}$ and speed measure $m$, and its associated Dirichlet form $(\EE,\FF)$ on $L^2(\mathbb{R},m)$ may be written as \eqref{EQ2FFS} via replacing $I$ by $\mathbb{R}$.  Assume
\begin{description}
\item[(H1)] $C_c^\infty(\mathbb{R})\subset \FF$;
\item[(H2)] $\mathtt{s}$ is not absolutely continuous. 
\end{description}
Let $\mathtt{t}$, $(\bar{\EE},\bar{\FF})$ and $\bar{\mathbf{X}}$ be the notations in \S\ref{SEC21}. Note that \textbf{(H1)} means $\mathtt{t}$ is absolutely continuous and $\mathtt{t}'\in L^2_\mathrm{loc}$, and \textbf{(H2)} indicates $\bar{\FF}\neq \FF$ (or $\bar{\mathbf{X}}\neq \mathbf{X}$ in distribution). Set
\begin{equation}\label{EQ24MXT}
	\tilde{m}(dx):=\mathtt{t}'\circ \mathtt{s}(x)dx,
\end{equation}
which is a positive Radon measure on $\mathbb{R}$ (note that $\tilde{m}(dx)=\left(\mathtt{t}'\circ \mathtt{s}\right)^2(x)d\mathtt{s}(x)$ and $\mathtt{t}'\in L^2_\mathrm{loc}$). Assume further
\begin{description}
\item[(H3)] $\tilde{m}\ll m$;
\item[(H4)] an a.e. version of $\mathtt{t}'$ is locally of bounded variation on $J$, and $d\mathtt{t}_*(\mathtt{t}')\ll m$, where $d\mathtt{t}_*(\mathtt{t}')$ is the image measure of $d\mathtt{t}'$ via the transform $\mathtt{t}: J\rightarrow \mathbb{R}$.
\end{description}
Under \textbf{(H3)}, denote
\begin{equation}\label{EQ24BDM}
	\sigma:= \left(\frac{d\tilde{m}}{dm}\right)^{1/2}.
\end{equation}
Clearly, $\sigma\in L^2_\mathrm{loc}(\mathbb{R},m)$. Under \textbf{(H4)}, denote
\begin{equation}\label{EQ24BDT}
	b:=\frac{1}{2}\cdot\frac{d\mathtt{t}_*(\mathtt{t}')}{dm}.
\end{equation}

Before presenting our main result, we need to make some notations. Recall that a regular Dirichlet form $(\EE',\FF')$ on $L^2(\mathbb{R},m)$ is said to be a regular Dirichlet subspace of $(\EE,\FF)$ provided $\FF'\subset \FF$ and $\EE(u,v)=\EE'(u,v)$ for any $u,v\in \FF'$. If so, we write $(\EE',\FF')\prec (\EE,\FF)$. Under \textbf{(H1)}, denote a set of regular Dirichlet forms on $L^2(\mathbb{R},m)$ by
\[
	\mathfrak{D}:=\left\{(\EE',\FF'): (\EE',\FF')\prec (\EE,\FF)\text{ and }C_c^\infty(\mathbb{R})\subset \FF'  \right\}.
\]
Clearly $(\bar{\EE},\bar{\FF})\in \mathfrak{D}$ and it is exactly the smallest one in it (i.e. if $(\EE',\FF')\in \mathfrak{D}$, then $(\bar{\EE},\bar{\FF})\prec (\EE',\FF')$). Moreover, under \textbf{(H2)}, the elements of $\mathfrak{D}$ are not unique (say $(\EE,\FF), (\bar{\EE},\bar{\FF})\in \mathfrak{D}$ but $\FF\neq \bar{\FF}$). Indeed, $\mathfrak{D}$ has uncountable elements.

\begin{lemma}\label{LM212}
	Under \textbf{(H1)} and \textbf{(H2)}, $\mathfrak{D}$ has uncountable elements. 
\end{lemma}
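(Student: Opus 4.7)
My plan is to construct, for every Borel subset $B$ of a fixed compact set on which the singular part of $d\ss$ is concentrated, a regular Dirichlet form $(\EE_B,\FF_B)\prec(\EE,\FF)$ with $C_c^\infty(\mathbb{R})\subset\FF_B$, and then to observe that distinct $B$'s give distinct forms. Indeed, by \textbf{(H2)} the singular part $\kappa$ in the Lebesgue decomposition \eqref{EQ2SGL} is nonzero, and since $\ss$ is continuous $\kappa$ is atomless; as $\kappa$ is Radon, one can choose a compact $K\subset\mathbb{R}$ with $0<\kappa(K)<\infty$. For each Borel $B\subset K$ I set
\[
  d\ss_B := g\cdot\lambda_\mathbb{R} + \mathbf{1}_B\,\kappa,\qquad \ss_B(e)=0,
\]
so that $\ss_B$ is strictly increasing (because $g>0$ $\lambda_\mathbb{R}$-a.e., as noted after \eqref{EQ2SXE}) and continuous (because $\mathbf{1}_B\kappa$ inherits atomlessness from $\kappa$); hence $\ss_B$ is a legitimate scaling function, and I let $(\EE_B,\FF_B)$ be the one-dimensional diffusion form \eqref{EQ2FFS} it induces, paired with the same speed measure $m$.

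To verify $(\EE_B,\FF_B)\in\mathfrak{D}$, I will argue two things. First, choosing a Borel set $H$ with $\lambda_\mathbb{R}(H)=0$ and $\kappa(\mathbb{R}\setminus H)=0$, one has $d\ss_B=\mathbf{1}_{A_B}\,d\ss$ for $A_B:=(\mathbb{R}\setminus H)\cup B$, so the Radon--Nikodym derivative $d\ss_B/d\ss$ takes only the values $0$ and $1$; by the standard classification of regular Dirichlet subspaces of one-dimensional diffusions \cite{FFY05, FHY10, SL15}, this forces $(\EE_B,\FF_B)\prec(\EE,\FF)$. Second, I apply Lemma~\ref{LM21} to $(\EE_B,\FF_B)$: since $g>0$ $\lambda_\mathbb{R}$-a.e., any $d\ss_B$-null set is $\lambda_\mathbb{R}$-null, so $\lambda_\mathbb{R}\ll d\ss_B$ locally and $\mathtt{t}_B:=\ss_B^{-1}$ is absolutely continuous with $\mathtt{t}_B'=1/g\circ \mathtt{t}_B$ Lebesgue-a.e.; the substitution $y=\mathtt{t}_B(z)$ yields
\[
  \int_c^d \mathtt{t}_B'(z)^2\,dz = \int_{\mathtt{t}_B(c)}^{\mathtt{t}_B(d)} \frac{1}{g(y)}\,dy,
\]
and the identical computation applied to $\ss$ itself shows this integrand is locally integrable, because $\mathtt{t}'\in L^2_\mathrm{loc}(J)$ by \textbf{(H1)} and Lemma~\ref{LM21}. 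Hence $\mathtt{t}_B'\in L^2_\mathrm{loc}(J_B)$, so Lemma~\ref{LM21} delivers $C_c^\infty(\mathbb{R})\subset\FF_B$.

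Finally, since the scaling function (subject to the normalization $\ss_B(e)=0$) determines the form \eqref{EQ2FFS} uniquely, distinct $\ss_B$ correspond to distinct elements of $\mathfrak{D}$. Two Borel subsets $B_1,B_2\subset K$ with $\kappa(B_1\triangle B_2)>0$ give $\ss_{B_1}\neq\ss_{B_2}$, and as $\kappa$ is a nonzero atomless finite measure on $K$, one can select (e.g.\ via a continuous parametrization $t\mapsto B_t\subset K$ with $\kappa(B_t)=t$ for $t\in[0,\kappa(K)]$) uncountably many such $B$'s, yielding uncountably many distinct forms in $\mathfrak{D}$. The only genuinely delicate step is the appeal to the regular-Dirichlet-subspace classification in \cite{FFY05} to pass from the measure-theoretic condition $d\ss_B/d\ss\in\{0,1\}$ to the form-theoretic statement $(\EE_B,\FF_B)\prec(\EE,\FF)$; everything else is routine once the Lebesgue decomposition of $d\ss$ is exploited.
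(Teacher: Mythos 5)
Your proof is correct and follows essentially the same route as the paper: the paper's construction $d\ss_c=g\,d\lambda+1_{[-x_c,x_c]}\,\kappa$ for a continuum of values $c$ is exactly your $d\ss_B$ with $B=[-x_c,x_c]$, and it likewise parametrizes the family continuously by the $\kappa$-mass and invokes the classification of regular Dirichlet subspaces via $d\hat{\ss}/d\ss\in\{0,1\}$ $d\ss$-a.e.\ from \cite{FHY10, SL15}. The only cosmetic differences are that you allow arbitrary Borel $B\subset K$ and verify $C_c^\infty(\mathbb{R})\subset \FF_B$ directly through Lemma~\ref{LM21} (essentially repeating the computation of Lemma~\ref{LM32}, with the minor caveat that $\tt_B'=0$, rather than $1/g\circ\tt_B$, a.e.\ on the positive-measure set $\ss_B(B\cap H)$, which does not affect your integral identity), whereas the paper obtains this inclusion by sandwiching $(\bar{\EE},\bar{\FF})\prec(\EE^c,\FF^c)\prec(\EE,\FF)$, so that $C_c^\infty(\mathbb{R})\subset\bar{\FF}\subset\FF^c$.
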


Now we may state our main result as follows. A diffusion process $\mathbf{Y}=(Y_t, \mathbf{Q}^x)_{t\geq 0, x\in \mathbb{R}}$ is called a solution of \eqref{EQ24DXT}, if for any initial distribution $\xi$ on $\mathbb{R}$, there exist another probability measure space $(\tilde{\Omega}, \tilde{\mathcal{G}}, \tilde{\mathbf{Q}}^\xi)$ with a filtration $(\tilde{\mathcal{G}}_t)_{t\geq 0}$, a $\tilde{\mathcal{G}}_t$-adapted continuous process $(\tilde{Y}_t)_{t\geq 0}$ and a $\tilde{\mathcal{G}}_t$-adapted standard Brownian motion $(\tilde{B}_t)_{t\geq 0}$ such that
\begin{description}
\item[(1)] $(\tilde{Y}_t)_{t\geq 0}$ is equivalent to $(Y_t)_{t\geq 0}$ that endowed with the probability measure $\mathbf{Q}^\xi$, where  	
\[
\mathbf{Q}^\xi(\cdot):=\int\xi(dx)\mathbf{Q}^x(\cdot);
\]
\item[(2)] $\tilde{\mathbf{Q}}^\xi$-a.s., for any $t\geq 0$, 
	\[
	 \tilde{Y}_t-\tilde{Y}_0=\int_0^t b(\tilde{Y}_s)ds+\int_0^t \sigma(\tilde{Y}_s)d\tilde{B}_s.
	\]
\end{description}  
Note that the first condition implies $\tilde{Y}_0\overset{d}{=}\xi$. Particularly, when $\sigma=1_\mathbb{R}$ (or $\sigma=1$ if no confusion causes), we write \eqref{EQ24DXT} as
\[
	dX_t=b(X_t)dt+dB_t
\]
for convinience, where $dB_t$ actually means $1_{\mathbb{R}}(X_t)dB_t$.

\begin{theorem}\label{THM214}
Let $\mathbf{X}$ be the diffusion process on $\mathbb{R}$ associated with the Dirichlet form $(\EE,\FF)$ on $L^2(\mathbb{R},m)$. Assume \textbf{(H1)}, \textbf{(H2)}, \textbf{(H3)}, \textbf{(H4)} are satisfied. Let $b,\sigma$ be given in \eqref{EQ24BDM} and \eqref{EQ24BDT}. Then for any $(\EE',\FF')\in \mathfrak{D}$, its associated diffusion process $\mathbf{X}'$ is a solution of the following SDE:
\begin{equation}\label{EQ24DXB}
	dX_t=b(X_t)dt+\sigma(X_t)dB_t,
\end{equation}
where $B=(B_t)_{t\geq 0}$ is a standard Brownian motion. 
\end{theorem}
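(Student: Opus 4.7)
The plan is to apply Fukushima's decomposition to the identity function $f(x)=x$ under any $(\EE',\FF')\in \mathfrak{D}$ and to match the resulting martingale and zero-energy parts with $\int_0^{\cdot}\sigma(X'_s)\,dB_s$ and $\int_0^{\cdot}b(X'_s)\,ds$, respectively. Each $(\EE',\FF')\in \mathfrak{D}$ is automatically strongly local (inherited from $(\EE,\FF)$ via $\EE'=\EE$ on $\FF'$), so its associated diffusion $\mathbf{X}'=(X'_t)_{t\ge 0}$ is a minimal one-dimensional diffusion with speed measure $m$ and some scaling function $\mathtt{s}'$. Since $C_c^\infty(\mathbb{R})\subset \FF'$, we have $f\in \FF'_{\mathrm{loc}}$ and Fukushima's decomposition gives $X'_t - X'_0 = M^{[f]}_t + N^{[f]}_t$.

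The first main step identifies $M^{[f]}$. The key observation is that the energy measure $\mu^{\EE'}_{\langle f\rangle}$ does not depend on the choice of $(\EE',\FF')\in \mathfrak{D}$: for $v\in C_c^\infty(\mathbb{R})$, the defining identity $\int v\,d\mu^{\EE'}_{\langle f\rangle}=2\EE'(fv,f)-\EE'(f^2,v)$ (applied after cutting $f$ off on a large compact set, since $f\in \FF'_{\mathrm{loc}}$) reduces to the corresponding $\EE$-expression because $\EE'=\EE$ on $\FF'$. For the ambient form, using Lemma~\ref{LM21} and the change of variable $dx=(\mathtt{t}'\circ\mathtt{s})\,d\mathtt{s}$, one gets $\tfrac{df}{d\mathtt{s}}(x)=\mathtt{t}'(\mathtt{s}(x))$ and hence
\[
\mu^{\EE}_{\langle f\rangle}(dx)=(\mathtt{t}'\circ\mathtt{s})^2(x)\,d\mathtt{s}(x)=(\mathtt{t}'\circ\mathtt{s})(x)\,dx=\tilde m(dx).
\]
By \textbf{(H3)} and the Revuz correspondence for $\mathbf{X}'$, $\langle M^{[f]}\rangle_t=\int_0^t\sigma^2(X'_s)\,ds$. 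The standard martingale-representation argument (on an enlarged probability space carrying an auxiliary Brownian motion, should $\sigma$ vanish on a set of positive $m$-measure) then produces a Brownian motion $B$ with $M^{[f]}_t=\int_0^t\sigma(X'_s)\,dB_s$.

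The second main step identifies $N^{[f]}$ via the criterion in \cite[\S5.4]{FOT11}: $N^{[f]}$ is of bounded variation with smooth signed measure $\mu$ iff $\EE'(f,u)=-\int u\,d\mu$ for $u$ in a core. Since $\EE'(f,u)=\EE(f,u)$ for $u\in C_c^\infty(\mathbb{R})$, changing variable $y=\mathtt{s}(x)$ in \eqref{EQ2FFS} transforms $\EE(f,u)$ into
\[
\EE(f,u)=\tfrac{1}{2}\int_J \mathtt{t}'(y)\,(u\circ\mathtt{t})'(y)\,dy.
\]
By \textbf{(H4)}, a version of $\mathtt{t}'$ is locally BV on $J$, so integration by parts (with vanishing boundary terms since $u\circ\mathtt{t}$ has compact support in $J$) yields
\[
\EE(f,u)=-\tfrac{1}{2}\int_J u(\mathtt{t}(y))\,d\mathtt{t}'(y)=-\int_{\mathbb{R}} u(x)\,b(x)\,dm(x).
\]
Hence $\mu_{N^{[f]}}=b\cdot m$ and Revuz correspondence gives $N^{[f]}_t=\int_0^t b(X'_s)\,ds$, which combines with the previous step to verify the SDE.

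The hardest technical points are: (i) the integration by parts in the absence of absolute continuity of $\mathtt{s}$, where only $\mathtt{t}$ (not $\mathtt{s}$) is absolutely continuous and $\mathtt{t}'$ must be taken in a specific locally BV representative before $d\mathtt{t}'$ is interpreted as a Stieltjes measure; and (ii) the passage of the Revuz correspondence between the ambient form $(\EE,\FF)$ and the subspace form $(\EE',\FF')$, which requires checking that the relevant quasi-notions and the class of smooth (signed) measures are compatible on the 1-dimensional state space. The martingale-representation step over the zero set of $\sigma$ is routine but must be handled by enlarging the probability space.
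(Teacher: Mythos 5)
Your proposal follows essentially the same route as the paper's proof: Fukushima's decomposition of the coordinate function under $(\EE',\FF')$; identification of the energy measure $\mu_{\langle f\rangle}=\tilde{m}$ uniformly over $\mathfrak{D}$ via $2\EE'(fv,f)-\EE'(f^2,v)=2\EE(fv,f)-\EE(f^2,v)$ on smooth test functions (the paper's Lemmas~\ref{LM41} and~\ref{LM45}); identification of $\mu_{N^{[f]}}=\frac{1}{2}d\mathtt{t}_*(\mathtt{t}')=b\cdot m$ by the same change of variables and integration by parts against the locally BV version of $\mathtt{t}'$ (Proposition~\ref{PRO42} and Lemma~\ref{LM46}); and the representation of the martingale part as $\int_0^\cdot\sigma(X'_s)\,dB_s$ on an enlarged probability space, which the paper likewise obtains from \cite[Chapter~V, Theorem~3.9]{RY99}, the enlargement being forced both by a possibly vanishing $\sigma$ and by the possibly finite lifetime of $\mathbf{X}'$.

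One point needs tightening. You quote the bounded-variation criterion as: $N^{[f]}$ is of bounded variation with smooth signed measure $\mu$ iff $\EE'(f,u)=-\langle\mu,u\rangle$ for $u$ in a \emph{core}. The actual criterion (\cite[Theorem~5.5.4]{FOT11}) requires the identity for all $u\in\bigcup_{k}\FF'_{b,F_k}$ over a generalized nest associated with $\mu$, and such $u$ need not be smooth; testing only against $C_c^\infty(\mathbb{R})$ does not, as stated, yield the ``if'' direction. The paper closes precisely this gap with Lemma~\ref{LM42}, which upgrades the criterion to $u\in\FF_{b,K}$ for every compact $K$, and with Lemma~\ref{LM46}, which transfers the identity from the ambient form to the subspace form via the inclusion $\FF'_{b,K}\subset\FF_{b,K}$ --- the step you flagged as concern (ii), and indeed the step where the ambient computation becomes usable for every element of $\mathfrak{D}$. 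Your computation itself survives unchanged: in one dimension every $u\in\FF_b$ with compact support is of the form $\phi\circ\mathtt{s}$ with $\phi$ absolutely continuous, compactly supported, and $\phi'\in L^2(J)$, so the change of variables and integration by parts you carry out for smooth $u$ apply verbatim on this larger class. The proof is complete once you verify the identity on that class and invoke the inclusion above, after which the uniqueness of the Revuz correspondence gives $N^{[f]}_t=\int_0^t b(X'_s)\,ds$ exactly as you intend.
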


The assumptions \textbf{(H3)} and \textbf{(H4)} seem to be somewhat abstract, but they may be replaced by another two accessible assumptions:
\begin{description}
\item[(H3')] $m(dx)=h(x)dx$ for some a.e. strictly positive function $h\in L^1_\mathrm{loc}(\mathbb{R})$;
\item[(H4')] $\mathtt{t}\in C^1(J)$ and $\mathtt{t}'$ is absolutely continuous. 
\end{description}
Note that  \textbf{(H3')} is stronger than \textbf{(H3)}, and \textbf{(H4')} is stronger than \textbf{(H1)}. Actually we have the following lemma. 

\begin{lemma}\label{LM214}
	$\textbf{(H3')}+\textbf{(H4')}\Rightarrow \textbf{(H3)}+\textbf{(H4)}$.
\end{lemma}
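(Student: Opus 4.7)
The strategy is to verify \textbf{(H3)} and the two halves of \textbf{(H4)} directly from the stronger hypotheses. For \textbf{(H3)}, I would invoke \textbf{(H3')} to note that $m$ is equivalent to the Lebesgue measure $\lambda_\mathbb{R}$, since $h$ is strictly positive a.e.\ and locally integrable. Because $\tilde m$ is by definition absolutely continuous with respect to $\lambda_\mathbb{R}$ (with density $\mathtt{t}'\circ \mathtt{s}$, well-defined and locally integrable under \textbf{(H4')}), the chain $\tilde m\ll \lambda_\mathbb{R}\sim m$ immediately yields $\tilde m\ll m$. The first clause of \textbf{(H4)}, that $\mathtt{t}'$ has an a.e.\ version of locally bounded variation, is trivial, since any absolutely continuous function is locally of bounded variation.

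The substantive part is $d\mathtt{t}_*(\mathtt{t}')\ll m$. Using \textbf{(H4')} I would write $d\mathtt{t}'=q(y)\,dy$ for some $q\in L^1_\mathrm{loc}(J)$, and recall that $m\sim \lambda_\mathbb{R}$, so the question reduces to $d\mathtt{t}_*(\mathtt{t}')\ll \lambda_\mathbb{R}$. Split $J$ into the open set $J_+:=\{\mathtt{t}'>0\}$ and the closed set $J_0:=\{\mathtt{t}'=0\}$; strict monotonicity of $\mathtt{t}$ forces $J_0$ to have empty interior. On $J_+$, the restriction $\mathtt{t}|_{J_+}$ is a $C^1$-diffeomorphism onto its image, so $\mathtt{s}$ restricted to $\mathtt{t}(J_+)$ is locally Lipschitz and enjoys the Lusin N-property. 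On the other hand, at every density point $y_0\in J_0$ at which $\mathtt{t}'$ is differentiable, any sequence $y_n\in J_0$ with $y_n\to y_0$ yields difference quotients $(\mathtt{t}'(y_n)-\mathtt{t}'(y_0))/(y_n-y_0)=0$, forcing $q(y_0)=(\mathtt{t}')'(y_0)=0$; hence $q=0$ a.e.\ on $J_0$. For any Borel $A\subset\mathbb{R}$ with $\lambda_\mathbb{R}(A)=0$, I would decompose $\mathtt{t}^{-1}(A)=(\mathtt{s}(A)\cap J_+)\cup(\mathtt{s}(A)\cap J_0)$: the piece in $J_+$ equals $\mathtt{s}|_{\mathtt{t}(J_+)}(A\cap\mathtt{t}(J_+))$, hence is Lebesgue null by the Lusin N-property, while the piece in $J_0$ contributes nothing to $\int q\,dy$ since $q=0$ a.e.\ there. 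Therefore $d\mathtt{t}_*(\mathtt{t}')(A)=\int_{\mathtt{t}^{-1}(A)}q\,dy=0$, which gives \textbf{(H4)}.

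The only real obstacle I anticipate is handling the zero set $J_0$: \textbf{(H4')} does not preclude $\mathtt{t}'$ from vanishing on a set of positive Lebesgue measure (for instance on a fat Cantor set), in which case the global inverse $\mathtt{s}$ need not be absolutely continuous, and the naive change-of-variables relation $dy=dx/\mathtt{t}'(\mathtt{s}(x))$ breaks down. The resolution is precisely to separate the good set $J_+$, where $\mathtt{s}$ inherits $C^1$ regularity, from $J_0$, where the density $q$ of $d\mathtt{t}'$ vanishes a.e.\ by the density-point argument above.
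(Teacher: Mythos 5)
Your proof is correct, and it shares with the paper the decisive observation: the density $q=\mathtt{t}''$ of $d\mathtt{t}'$ vanishes a.e.\ on the zero set $\{\mathtt{t}'=0\}$ (the paper derives $\mathtt{t}''(z)=0$ from the fact that such a $z$ is a global minimum of $\mathtt{t}'\geq 0$; your density-point argument is equivalent and even slightly more robust, as it never uses the sign of $\mathtt{t}'$). Where you genuinely diverge is in what you compare $d\mathtt{t}_*(\mathtt{t}')$ against. The paper (in the computation following Theorem~\ref{THM214}, which the proof of Lemma~\ref{LM214} simply cites) proves $d\mathtt{t}_*(\mathtt{t}')\ll\tilde{m}$ and then concludes via $\tilde{m}\ll m$: since $\tilde{m}(dx)=\left(\mathtt{t}'(y)^2dy\right)\circ\mathtt{s}(x)$ and $d\mathtt{t}_*(\mathtt{t}')=\left(\mathtt{t}''(y)dy\right)\circ\mathtt{s}$ are pushforwards of measures on $J$ under the \emph{same} homeomorphism $\mathtt{t}$, the comparison takes place entirely on $J$, where it reduces to $\mathtt{t}''(y)dy\ll\mathtt{t}'(y)^2dy$, and the zero-set observation finishes immediately---no regularity of $\mathtt{s}$ is ever invoked. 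You instead prove $d\mathtt{t}_*(\mathtt{t}')\ll\lambda_{\mathbb{R}}$ directly on the image side, which forces you to control the distortion of $\mathtt{t}$ on $J_+=\{\mathtt{t}'>0\}$ through the local Lipschitz continuity of $\mathtt{s}$ on $\mathtt{t}(J_+)$ and the Lusin N-property; that part of your argument is sound (on $J_+$ the inverse is locally $C^1$, and Lipschitz maps carry null sets to null sets along a countable compact exhaustion), but it is machinery that the paper's pushforward trick renders unnecessary---and you correctly diagnose in your last paragraph why some such care is needed, since $\mathtt{s}$ itself need not be absolutely continuous under \textbf{(H2)}. What your route buys is a conclusion stated directly against Lebesgue measure, though under \textbf{(H3')} one has $m\sim\lambda_{\mathbb{R}}$, so the two conclusions coincide; what the paper's route buys is brevity and the reusable intermediate fact $d\mathtt{t}_*(\mathtt{t}')\ll\tilde{m}$, which is precisely what Corollary~\ref{COR44} later upgrades to a characterization of \textbf{(H4')}. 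Your treatment of \textbf{(H3)} and of the locally-bounded-variation clause of \textbf{(H4)} matches the paper's verbatim.
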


In the end of this section, let us compute more concrete expressions of $b, \sigma$ under the assumptions \textbf{(H2)}, \textbf{(H3')} and \textbf{(H4')}. Clearly,
\[
	\sigma=\left(\frac{\mathtt{t}'\circ \mathtt{s}}{h}\right)^{1/2}.
\]
It follows from \textbf{(H4')} that $d\mathtt{t}'=\mathtt{t}''(x)dx$ and $d\mathtt{t}_*(\mathtt{t}')=\left(\mathtt{t}''(x)dx\right)\circ \mathtt{s}$. We assert $d\mathtt{t}_*(\mathtt{t}')\ll \tilde{m}$. In fact, note that $\tilde{m}(dx)=\left(\mathtt{t}'(y)^2dy\right)\circ \mathtt{s}(x)$, which implies $d\mathtt{t}_*(\mathtt{t}')\ll \tilde{m}$ is equivalent to $\mathtt{t}''(x)dx\ll \mathtt{t}'(x)^2dx$. Let $A$ be a set such that 
\[
	\int_A \mathtt{t}'(x)^2dx=0.
\]
Since $\mathtt{t}'\geq 0$, it follows that $A\subset Z_{\mathtt{t'}}:=\{x: \mathtt{t}'(x)=0\}$. For a.e. $z\in A$, $\mathtt{t}'$ is differential at $z$, whereas $\mathtt{t}'\geq 0$ and $\mathtt{t}'(z)=0$. Thus we may deduce that $\mathtt{t}''(z)=0$. That indicates
\[
	\int_A \mathtt{t}''(x)dx=0.
\]
Furthermore, 
\[
	2b=\frac{d\mathtt{t}_*(\mathtt{t}')}{dm}=\frac{d\mathtt{t}_*(\mathtt{t}')}{d\tilde{m}}\cdot \frac{d\tilde{m}}{dm}=\frac{\mathtt{t}''}{(\mathtt{t}')^2}\circ \mathtt{s} \cdot \frac{\mathtt{t}'\circ \mathtt{s}}{h}=\frac{\mathtt{t}''\circ \mathtt{s}}{(\mathtt{t}'\circ \mathtt{s})\cdot h}.
\]
Particularly, if $m=\tilde{m}$ (i.e. $\EE$ is an energy form), then $\sigma\equiv 1$ and $2b=\left(\mathtt{t}''/(\mathtt{t}')^2\right)\circ \mathtt{s}$. 
Surely, we need to point out $\mathtt{t}'\circ \mathtt{s}>0$ a.e. (so \textbf{(H3)} is satisfied). In fact, from $\mathtt{t}'\circ \mathtt{s}\geq 0$, and ($|\cdot|$ denotes the Lebesgue measure)
\[
|\{x: \mathtt{t}'\circ \mathtt{s}(x)=0\}|=\int_{\mathtt{t}'\circ \mathtt{s}=0}dx= \int_{\mathtt{t}'\circ \mathtt{s}=0} \mathtt{t}'\circ \mathtt{s}d\mathtt{s}=0,
\]
we may deduce it easily. Therefore, we have the following corollary.

\begin{corollary}\label{COR215}
Let $\mathbf{X}$ and $(\EE,\FF)$ be in Theorem~\ref{THM214}. Assume \textbf{(H2)}, \textbf{(H4')} are satisfied, and $m=\tilde{m}$. Then for any $(\EE',\FF')\in \mathfrak{D}$, its associated diffusion process $\mathbf{X}'$ is a solution of the following SDE:
\begin{equation}\label{EQ24XTT}
dX_t=\frac{1}{2}\cdot \left(\frac{\mathtt{t}''}{(\mathtt{t}')^2}\circ \mathtt{s}\right)(X_t)dt+dB_t,
\end{equation}
where $B=(B_t)_{t\geq 0}$ is a standard Brownian motion. 
\end{corollary}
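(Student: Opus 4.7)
The plan is to obtain Corollary~\ref{COR215} as a direct specialization of Theorem~\ref{THM214}, so the task reduces to two routine checks: (a) the hypotheses of the corollary imply those of the theorem, and (b) the formulas for $b$ and $\sigma$ simplify to the stated explicit expressions once one feeds in $m=\tilde m$.

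For step (a), I would first observe that \textbf{(H4')} immediately gives \textbf{(H1)}: from $\mathtt{t}\in C^1(J)$ one sees that $\mathtt{t}$ is locally absolutely continuous, while $\mathtt{t}'\in C(J)\subset L^2_\mathrm{loc}(J)$. Next, since $\tilde m(dx)=(\mathtt{t}'\circ\mathtt{s})(x)\,dx$ by \eqref{EQ24MXT}, the identification $m=\tilde m$ says precisely that $m$ has the density $h=\mathtt{t}'\circ\mathtt{s}$ with respect to Lebesgue measure; as $m$ is Radon, $h\in L^1_\mathrm{loc}(\mathbb{R})$. The a.e.\ strict positivity of $h$ is the content of the computation
\[
 \bigl|\{x:\mathtt{t}'\circ\mathtt{s}(x)=0\}\bigr|=\int_{\mathtt{t}'\circ\mathtt{s}=0}d\mathtt{s}=0
\]
carried out in the text immediately before the corollary. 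Hence \textbf{(H3')} holds, and Lemma~\ref{LM214} supplies \textbf{(H3)} and \textbf{(H4)}.

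Now Theorem~\ref{THM214} applies: the diffusion $\mathbf{X}'$ associated to any $(\EE',\FF')\in\mathfrak{D}$ solves $dX_t=b(X_t)\,dt+\sigma(X_t)\,dB_t$ with $\sigma=(d\tilde m/dm)^{1/2}$ and $2b=d\mathtt{t}_*(\mathtt{t}')/dm$. Under $m=\tilde m$ the first Radon--Nikodym derivative is identically~$1$, so $\sigma\equiv 1$. For the drift, the general formula
\[
 2b=\frac{\mathtt{t}''\circ\mathtt{s}}{(\mathtt{t}'\circ\mathtt{s})\,h}
\]
(already derived in the paragraph preceding the statement under \textbf{(H3')}+\textbf{(H4')}) collapses, upon substituting $h=\mathtt{t}'\circ\mathtt{s}$, to $2b=\bigl(\mathtt{t}''/(\mathtt{t}')^2\bigr)\circ\mathtt{s}$, which is exactly the coefficient in \eqref{EQ24XTT}.

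There is no genuine obstacle: all the substantive work lies in Theorem~\ref{THM214} and in the preparatory computations of $b$ and $\sigma$ that immediately precede the corollary. The only tasks are the bookkeeping verification that $(H2),(H4'),m=\tilde m$ fit the framework of the theorem, and the two Radon--Nikodym simplifications above. If one wanted a fully self-contained write-up, the mildly delicate point would be recording why $\mathtt{t}'\circ\mathtt{s}>0$ a.e., but that argument is already in place.
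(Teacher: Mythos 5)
Your proposal is correct and follows essentially the same route as the paper, which likewise deduces \textbf{(H3')} from $m=\tilde m$ via the a.e.\ strict positivity of $\mathtt{t}'\circ\mathtt{s}$, invokes Lemma~\ref{LM214} to obtain \textbf{(H3)} and \textbf{(H4)}, and then specializes Theorem~\ref{THM214} using $d\tilde m/dm\equiv 1$ (so $\sigma\equiv 1$) and $2b=\bigl(\mathtt{t}''/(\mathtt{t}')^2\bigr)\circ\mathtt{s}$. One transcription slip worth fixing: the middle term of your display should read $\int_{\{\mathtt{t}'\circ\mathtt{s}=0\}}\mathtt{t}'\circ\mathtt{s}\,d\mathtt{s}$ rather than $\int_{\{\mathtt{t}'\circ\mathtt{s}=0\}}d\mathtt{s}$, since under \textbf{(H2)} the singular part of $d\mathtt{s}$ actually charges the zero set of $\mathtt{t}'\circ\mathtt{s}$, so the integral as you wrote it is strictly positive.
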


One may worry that the assumptions above are so strict ($\ss,\tt$ must also be continuous and strictly increasing) that no examples would obey them. Hence we shall give an example that satisfies \textbf{(H2)}, \textbf{(H3')} and \textbf{(H4')}  in Example~\ref{EXA52} by using a (in fact, any) compact generalized Cantor set.

\section{Proofs of \S\ref{SEC21} and \S\ref{SEC22}}\label{SEC3}

In this section, we shall prove the denseness of $C_c^\infty(E)$ in the Dirichlet spaces outlined in \S\ref{SEC21} and \S\ref{SEC22}. These Dirichlet spaces share the common feather, i.e. they are induced by one or several scaling functions. 

\subsection{One-dimensional cases}

We first consider the one-dimensional cases. Lemma~\ref{LM21} characterized the basic assumption \eqref{EQ2CCE} via the inverse $\mathtt{t}$ of scaling function $\mathtt{s}$. As we noted before, \eqref{EQ2CCE} is also equal to \eqref{EQ1FFL}. We shall write down these three conditions for handy reference and prove their equivalence:
\begin{description}
\item[(f)] $f\in \FF_\mathrm{loc}$;
\item[(t)] $\mathtt{t}$ is absolutely continuous, and $\mathtt{t}'\in L^2_\mathrm{loc}(J)$;
\item[(c)] $C_c^\infty(I)\subset \FF$. 
\end{description}

\begin{proof}[Proof of Lemma~\ref{LM21}]
In fact, we shall prove $\textbf{(f)}\Leftrightarrow \textbf{(t)}\Leftrightarrow \textbf{(c)}$. Firstly, assume \textbf{(f)} holds. Let $I'$ be an arbitrary relatively compact open subset of $I$, then there exists a function $f^{I'}\in \mathcal{F}$ such that $f^{I'}(x)=x$ for any $x\in I'$ and thus
	\[
		f^{I'}(x) = (f^{I'}\circ \mathtt{t}) (\mathtt{s}(x))=\mathtt{t}(\mathtt{s}(x)), \quad x\in I'. 
	\]
Since $f^{I'}$ is absolutely continuous with respect to $\mathtt{s}$ on $I'$ (Cf. \cite[\S2]{FHY10} or \cite[(2.2.28)]{CF12}), it follows that $\mathtt{t}$ is absolutely continuous on $J':=\mathtt{s}(I')$ and 
	\[
		\int_{J'} \left(\mathtt{t}'(x)\right)^2 dx=\int_{I'} \left(\frac{df^{I'}}{d\mathtt{s}}\right)^2 d\mathtt{s} \leq  \int_I \left(\frac{df^{I'}}{d\mathtt{s}}\right)^2 d\mathtt{s}<\infty.
	\]
Since $I'$ is arbitrary, we may conclude $\mathtt{t}$ is absolutely continuous on $J$ and $\mathtt{t}'\in L^2_{\text{loc}}(J)$. That implies \textbf{(t)}. 

Next, assume \textbf{(t)} holds. For any $\phi\in C_c^\infty(I)$, it follows from 
\[
	\phi=\phi\circ \mathtt{t}\circ \mathtt{s}
\]
and $\mathtt{t}$ is absolutely continuous that $\phi$ is absolutely continuous with respect to $\mathtt{s}$. Moreover,
\[\begin{aligned}
	\int_I \left(\frac{d\phi}{d\mathtt{s}}\right)^2d\mathtt{s}=\int_J \left(\frac{d\phi\circ \mathtt{t}}{dx}\right)^2 dx=\int_J {\phi'(\mathtt{t}(x))}^2{\mathtt{t}'(x)}^2dx.
\end{aligned}\]
Since $\phi'$ is bounded with compact support and $\mathtt{t}'\in L^2_{\text{loc}}(J)$, we have 
\[\int_I \left(\frac{d\phi}{d\mathtt{s}}\right)^2d\mathtt{s}<\infty.\]
 Hence $\phi \in \mathcal{F}$, and that implies \textbf{(c)}.

Finally, assume \textbf{(c)} holds. For any relatively compact open subset $I'$ of $I$, by using the Urysohn's lemma, we may take a function $h\in C_c^\infty(I)$ with $h|_{
\bar{I}'}\equiv 1$. Define
\begin{equation}\label{FIF}
	f^{I'}:= f\cdot h.
\end{equation}
Clearly $f^{I'}\in C_c^\infty(I)\subset\mathcal{F}$ and 
\[
	f^{I'}(x)=x, \quad x\in I',
\]
which indicates $f\in \mathcal{F}_{\text{loc}}$. That completes the proof. 
\end{proof}

\begin{remark}\label{RM31}
Under the condition \textbf{(c)} (equivalently \textbf{(f)} or \textbf{(t)}), we may obtain another expression of $\EE(u,v)$ for any $u,v\in C_c^\infty(I)$. Indeed, for any $u,v\in C_c^\infty(I)$, it follows from \eqref{EQ2FFS} that
\[
	\EE(u,v)=\frac{1}{2}\int_I \left(\frac{du}{dx}\cdot \frac{dx}{d\ss}\right) \left(\frac{dv}{dx}\cdot\frac{dx}{d\ss}\right)d\ss=\frac{1}{2}\int_I u'(x)v'(x)(\mathtt{t}'\circ \ss(x))^2d\ss(x).
\]
Note that $dx$ denotes the Lebesgue measure and $dx=d\tt\circ \ss(x)=(\tt'\circ \ss)d\ss$. Recall that in \eqref{EQ24MXT} we use $\tilde{m}$ to stand for the measure $\tt'\circ\ss(x)dx= (\tt'\circ \ss)^2d\ss$, thus we have for any $u,v\in C_c^\infty(I)$, 
\[
	\EE(u,v)=\frac{1}{2}\int_I u'(x)v'(x)\tilde{m}(dx). 
\]
Particularly, if $m=\tilde{m}$ (say Corollary~\ref{COR215}),  then the Dirichlet form $(\bar{\EE},\bar{\FF})$ (i.e. the smallest closed extension of $C_c^\infty(I)$ in $(\EE,\FF)$) is an energy form on $L^2(I,m)$. 
\end{remark}

Next, we take to prove Theorem~\ref{THM21}. Before starting the detailed proof, we need to explain some facts about the regular Dirichlet subspaces of $(\EE,\FF)$. Any regular Dirichlet subspace $(\EE',\FF')$ of $(\EE,\FF)$ still corresponds to a minimal diffusion process with the same speed measure $m$ and another scaling function $\hat{\ss}$ such that
\begin{equation}\label{EQ31DSS}
	\frac{d\hat{\ss}}{d\ss}=0\text{ or }1,\quad d\ss\text{-a.e.},
\end{equation}
and vice versa.
In other words, $(\EE',\FF')=\left(\EE^{(\hat{\ss},m)}, \FF^{(\hat{\ss},m)}_0\right)$ (see \eqref{EQ2FFS}), and all regular Dirichlet subspaces of $(\EE,\FF)$ has a one-to-one correspondence with the scaling functions satisfying \eqref{EQ31DSS}. This characterization can be found in \cite[Theorem~4.1]{FHY10} and  \cite[Proposition~2.2]{SL15}. Note that the scaling function $\bar{\ss}$ defined by \eqref{EQ2SXE} satisfies
\[
	\frac{d\bar{\ss}}{d\ss}=1_{I\setminus H},
\]
where $H$ is the support of $\kappa$ in \eqref{EQ2SGL}. Particularly, $\bar{\ss}$ corresponds to a regular Dirichlet subspace of $(\EE,\FF)$. Now we shall divide the proof of Theorem~\ref{THM21} into two steps. The first one is to assert the regular Dirichlet subspace associated with $\bar{\ss}$ is bigger than the smallest closed extension $\bar{\FF}$ of $C_c^\infty(I)$. 

\begin{lemma}\label{LM32}
Let $\bar{\tt}$ be the inverse function of $\bar{\ss}$. Then $\bar{\tt}$ is absolutely continuous on $\bar{J}:=\bar{\ss}(I)$ and $\bar{\tt}'\in L^2_\mathrm{loc}(\bar{J})$. Particularly, $C_c^\infty(I)\subset \FF^{(\bar{\ss},m)}_0$. 
\end{lemma}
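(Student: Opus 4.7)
The plan is to derive the explicit pointwise identity $\bar{\tt}'(y)=1/g(\bar{\tt}(y))$ for almost every $y\in\bar J$, from which both absolute continuity of $\bar{\tt}$ and local $L^2$-integrability of $\bar{\tt}'$ will follow by a one-line change of variable. The engine of the argument is the image-measure identity $\ss_*(d\ss)=\lambda_J$, which holds because $\ss$ is a continuous strictly increasing bijection of $I$ onto $J$, so the push-forward of its Stieltjes measure is necessarily Lebesgue measure; the same reasoning gives $\bar{\ss}_*(d\bar{\ss})=\lambda_{\bar J}$. Unwound, these identities read
\[
  \int_J f(\tt(y))\,dy=\int_I f\,d\ss,\qquad \int_{\bar J} f(\bar{\tt}(y))\,dy=\int_I f(x)g(x)\,dx
\]
for any nonnegative Borel $f$ on $I$.

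Writing $\tilde g:=1/g$ on $\{g>0\}$ and $\tilde g:=0$ on the Lebesgue null set $H=\{g=0\}$ that supports $\kappa$, the first step is to reformulate the hypothesis $\tt'\in L^2_{\mathrm{loc}}(J)$ as $\tilde g\in L^1_{\mathrm{loc}}(I)$. Since $\tt$ is AC with $\tt(d)-\tt(c)=\int_c^d \tt'\,dy$ and $\tt'\geq 0$, applying the image-measure formula with $f=\tilde g$ yields
\[
  \int_c^d \tilde g\circ\tt\,dy=\int_{\tt([c,d])}\tilde g\,d\ss=\lambda_I\bigl(\tt([c,d])\cap\{g>0\}\bigr)=\tt(d)-\tt(c),
\]
because $\lambda_I(H)=0$. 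Hence $\tt'=\tilde g\circ\tt$ a.e.\ on $J$, and the same manipulation with $f=\tilde g^2$ converts $\int_c^d(\tt')^2\,dy$ into $\int_{\tt([c,d])}\tilde g\,dx$, giving the claimed equivalence.

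The second step is to run exactly the same two computations with $\bar{\ss}$ in place of $\ss$; this is in fact easier because $\bar{\ss}$ is AC to begin with. The identity $\int_c^d \tilde g\circ\bar{\tt}\,dy=\int_{\bar{\tt}([c,d])}\tilde g\cdot g\,dx=\bar{\tt}(d)-\bar{\tt}(c)$ simultaneously exhibits $\bar{\tt}$ as an indefinite integral, proving absolute continuity together with the a.e.\ formula $\bar{\tt}'=\tilde g\circ\bar{\tt}$; the analogous manipulation with $\tilde g^2$ then gives $\int_c^d(\bar{\tt}')^2\,dy=\int_{\bar{\tt}([c,d])}\tilde g\,dx<\infty$ by Step~1, so $\bar{\tt}'\in L^2_{\mathrm{loc}}(\bar J)$. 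The final assertion $C_c^\infty(I)\subset \FF^{(\bar{\ss},m)}_0$ is then immediate from Lemma~\ref{LM21} applied to the Dirichlet form with scaling function $\bar{\ss}$ and inverse $\bar{\tt}$.

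The only subtlety I expect to have to handle is the bookkeeping on the vanishing set $H$ of $g$: there $1/g$ is ill-defined, yet $H$ carries the full singular mass of $d\ss$ while carrying no Lebesgue mass. The convention $\tilde g\equiv 0$ on $H$ is exactly what makes the identities $\tilde g\cdot g=1_{\{g>0\}}$ and $\int_A\tilde g\,d\ss=\lambda_I(A\cap\{g>0\})$ hold without further qualification, and this clean bookkeeping is really the only step where the argument requires any genuine care.
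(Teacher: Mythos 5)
Your proof is correct and takes essentially the same route as the paper: both rest on the change-of-variables identities showing $\tt'=(1/g)\circ\tt$ and $\bar{\tt}'=(1/g)\circ\bar{\tt}$ a.e.\ (off the Lebesgue-null carrier of $\kappa$), so that $\int(\bar{\tt}')^2\,dy$ over a compact subset of $\bar{J}$ equals $\int(\tt')^2\,dy$ over the corresponding compact subset of $J$, finite by Lemma~\ref{LM21}, which also yields the final inclusion $C_c^\infty(I)\subset\FF^{(\bar{\ss},m)}_0$. The only difference is presentational: the paper dismisses the absolute continuity of $\bar{\tt}$ as an obvious consequence of $g>0$ a.e.\ and invokes the chain-rule identity $(\bar{\tt}'\circ\bar{\ss})\cdot\bar{\ss}'=1$, whereas you obtain both the absolute continuity and the derivative formula simultaneously from the single indefinite-integral identity $\bar{\tt}(d)-\bar{\tt}(c)=\int_c^d\tilde g\circ\bar{\tt}\,dy$, a slightly more self-contained write-up of the same computation.
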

\begin{proof}
The absolute continuity of $\bar{\tt}$ is obvious from the fact $g>0$ a.e. (we refer this fact to the notes before Theorem~\ref{THM21}). 
	For any compact subset $J'\subset \bar{J}$, set $I':=\bar{\mathtt{t}}(J')$ which is also a compact subset of $I$. It follows from $({\bar{\mathtt{t}}}'\circ \bar{\mathtt{s}})\cdot \bar{\mathtt{s}}'=1$ and $\lambda_I(H)=0$ that
	\[\begin{aligned}
		\int_{J'} (\bar{\mathtt{t}}'(x))^2dx&=\int_{I'}(\bar{\mathtt{t}}'\circ \bar{\mathtt{s}})^2d\bar{\mathtt{s}}\\&=\int_{I'}\bar{\mathtt{t}}'(\bar{\mathtt{s}}(x))dx\\&=\int_{I'\setminus H} \frac{1}{\bar{\mathtt{s}}'(x)}dx\\&=\int_{I'\setminus H} \frac{1}{g(x)}dx.
	\end{aligned}\] 
	On the other hand, 
	\[
		{\frac{1}{g}}\bigg|_{I\setminus H}= {\frac{dx}{d\mathtt{s}}}\bigg|_{I\setminus H}={\mathtt{t}'\circ \mathtt{s}}|_{I\setminus H},\quad \lambda_I\text{-a.e.}
	\]
Since $\mathtt{s}(I')$ is compact in $J$ and $\mathtt{t}'\in L^2_{\text{loc}}(J)$ by Lemma~\ref{LM21}, we may conclude
	\[
		\int_{J'} (\bar{\mathtt{t}}'(x))^2dx=\int_{I'\setminus H} \mathtt{t}'\circ \mathtt{s}(x) dx=\int_{I'}(\mathtt{t}'\circ \mathtt{s})^2d\mathtt{s}=\int_{\mathtt{s}(I')}\mathtt{t}'(x)^2dx<\infty.
	\]
The second assertion is clear by using Lemma~\ref{LM21} again. 
That completes the proof. 
\end{proof}

\begin{proof}[Proof of Theorem~\ref{THM21}]
It follows from Lemma~\ref{LM32} that $\bar{\FF}\subset \FF^{(\bar{\ss},m)}_0$ and thus
\[
	(\bar{\EE},\bar{\FF})\prec \left(\EE^{(\bar{\ss},m)}, \FF^{(\bar{\ss},m)}_0\right),\quad \left(\EE^{(\bar{\ss},m)}, \FF^{(\bar{\ss},m)}_0\right)\prec (\EE,\FF). 
\]
Denote the scaling function of $(\bar{\EE},\bar{\FF})$ by $\hat{\ss}$. We only need to prove $\hat{\ss}=\bar{\ss}$. In fact, we have $d\hat{\ss}\ll d\bar{\ss}$ and 
	\[
		\frac{d\hat{\ss}}{d\bar{\ss}}= 0\text{ or } 1,\quad d\bar{\ss}\text{-a.e.}
	\]
On the other hand, $d\bar{\ss}=g\cdot \lambda_I$ with $g>0$ a.e., hence $d\bar{\ss}$ is equivalent to $\lambda_I$. If the $d\bar{\ss}$-measure of the set 
	\[
		Z_{d\hat{\ss}/d\bar{\ss}}:=\left\{x\in I: \frac{d\hat{\ss}}{d\bar{\ss}}(x)=0\right\}
	\]
is positive, then we can obtain $\lambda_I(Z_{d\hat{\ss}/d\bar{\ss}})>0$. Since $d\bar{\ss}$ is equivalent to $\lambda_I$, the zero point set of $\hat{\ss}'$
	\[
		Z_{\hat{\ss}'}:=\{x\in I:\hat{\ss}'(x)=0\}
	\]
equals $Z_{d\hat{\ss}/d\bar{\ss}}$ in the sense of a.e. (also $d\bar{\ss}$-a.e.). Thus $\lambda_I(Z_{\hat{\ss}'})>0$ and the inverse function 
	\[
		\hat{\tt}:=\hat{\ss}^{-1}	
	\]
is not absolutely continuous. However, it follows from $C_c^1(I)\subset \bar{\mathcal{F}}$ and Lemma~\ref{LM21} that $\hat{\tt}$ is absolutely continuous.  Therefore, we may conclude $d\bar{\ss}(Z_{d\hat{\ss}/d\bar{\ss}})=0$ and that implies $\hat{\ss}=\bar{\ss}$ (as we noted in \S\ref{SEC21}, any scaling function is forced to be $0$ at a fixed point $e\in I$).  
		
		The second assertion follows from the fact $(\bar{\mathcal{E}},\bar{\mathcal{F}})=(\mathcal{E,F})$ if and only if $\ss=\bar{\ss}$, in other words, $\ss$ is absolutely continuous. That completes the proof.
\end{proof}

\subsection{Cartesian product and skew product} 
The proofs of Theorem~\ref{THM25} and \ref{THM27} are somewhat similar to the one-dimensional cases. The essential factors are the scaling functions, whereas the details are a little different.

\subsubsection{Cartesian product}
Let us first prove the case of Cartesian product. All the notations are given in \S\ref{SEC221}. Note that for any $u,v\in \FF$, the form $\EE$ of Cartesian product may be written as
\[
	\EE(u,v)=\sum_{i=1}^d \int_{\hat{E}_i} \EE^i(u_{\hat{x}_i}(\cdot),v_{\hat{x}_i}(\cdot))\hat{m}_i(d\hat{x}_i),
\]
where $\hat{E}_i=I_1\times \cdots \times I_{i-1}\times I_{i+1} \times \cdots \times I_d$, $\hat{x}_i=(x_1,\cdots, x_{i-1},x_{i+1},\cdots, x_d)$, $\hat{m}_i$ is the corresponding product measure on $\hat{E}_i$, and $u_{\hat{x}_i}(\cdot):=u(x_1,\cdots, x_{i-1}, \cdot, x_{i+1}, \cdots, x_d)$.  We refer the detailed expression of $(\EE,\FF)$ to \cite[Theorem~1.4]{O89} or \cite[Proposition~3.1]{LY15}. 

\begin{proof}[Proof of Lemma~2.4]
In fact, $C_c^\infty(E)\subset \FF$ is equivalent to that any coordinate function $f^i\in \FF_\mathrm{loc}$, where $f^i(x)=x_i$ for any $x=(x_1,\cdots,x_d)$ and $1\leq i\leq d$. We refer this fact to \cite[Theorem~5.6.2 and Corollary~5.6.2]{FOT11}. Note that $f^i\in \FF_\mathrm{loc}$ if and only if $f^i_{\hat{x}_i}\in \FF^i_\mathrm{loc}$. From Lemma~\ref{LM21}, we may deduce that $f^i_{\hat{x}_i}\in \FF^i_\mathrm{loc}$ is equivalent to that $\mathtt{t}_i$ is absolutely continuous and $\tt'_i\in L^2(J_i)$. That completes the proof. 
\end{proof}

Then the proof of Theorem~\ref{THM25} is similar to that of Theorem~\ref{THM21} and not tough.

\begin{proof}[Proof of Theorem~\ref{THM25}]
Let $\bar{\tt}_i$ be the inverse of $\bar{\ss}_i$ for any $1\leq i\leq d$. 
Further let $(\bar{\EE}^i,\bar{\FF}^i)$ be the associated Dirichlet form of $\bar{\mathbf{X}}^i$ and $(\hat{\EE},\hat{\FF})$ that of the Cartesian product $(\bar{\mathbf{X}}^1,\cdots, \bar{\mathbf{X}}^d)$. We need to prove $\bar{\FF}=\hat{\FF}$. In fact, it follows from Lemma~\ref{LM32} that $\bar{\mathtt{t}}_i$ is absolutely continuous and $\bar{\tt}'_i\in L^2_\mathrm{loc}(\bar{J}_i)$, where $\bar{J}_i:=\bar{\ss}_i(I_i)$. Then Lemma~\ref{LM24} implies $C_c^\infty(E)\subset \hat{\FF}$, and hence $\bar{\FF}\subset \hat{\FF}$. 

On the contrary, since $C_c^\infty(I_i)$ is a special standard core of $(\bar{\EE}^i,\bar{\FF}^i)$, we may deduce that
\[
	\mathcal{C}:=C_c^\infty(I_1)\otimes \cdots \otimes C_c^\infty(I_d)
\]
is a core of $(\hat{\EE},\hat{\FF})$. However, clearly we have $\mathcal{C}\subset C_c^\infty(E)$. That indicates $\hat{\FF}\subset \bar{\FF}$. 

The last assertion is similar to Theorem~\ref{THM21}. That completes the proof. 
\end{proof}

\subsubsection{Skew product}

The proofs of Lemma~\ref{LM26} and Theorem~\ref{THM27} are very similar to those of Lemma~\ref{LM24} and Theorem~\ref{THM25}. 

\begin{proof}[Proof of Lemma~\ref{LM26}]
Assume $C_c^\infty(E)\subset \FF$. Since $C_c^\infty(I)\otimes C^\infty(S^{d-1})\subset C_c^\infty(E)$, we may deduce from \eqref{EQ22OMU} that $C_c^\infty(I)\subset \FF^1$. Then it follows from Lemma~\ref{LM21} that $\mathtt{t}$ is absolutely continuous and $\mathtt{t}'\in L^2_\mathrm{loc}(J)$. 

On the contrary, assume $\mathtt{t}$ satisfies the assumptions, equivalently by Lemma~\ref{LM21}, we have $C_c^\infty(I)\subset \FF^1$. It follows from $\FF^1\otimes \FF^2\subset \FF$ (Cf. \cite[Theorem~1.4]{O89}) that $C_c^\infty(I)\otimes C^\infty(S^{d-1})\subset \FF$. Denote the associated Dirichlet form of \eqref{EQ22XTR} by $(\hat{\EE},\hat{\FF})$. From \cite[Theorem~1.1]{FO89} and \cite[Theorem~1.4]{O89} (or \cite[Proposition~3.1]{LY15}), we may conclude that $C_c^\infty(I)\otimes C^\infty(S^{d-1})$ and $C_c^\infty(E)$ are both cores of $(\hat{\EE},\hat{\FF})$. Note that for any $u,v\in C_c^\infty(I)\otimes C^\infty(S^{d-1})$, we have 
\[
	\EE(u,v)=\hat{\EE}(u,v). 
\]
Hence $\hat{\FF}\subset \FF$, which implies $C_c^\infty(E)\subset \hat{\FF}\subset \FF$. That completes the proof. 
\end{proof}

From the fact $C_c^\infty(I)\otimes C^\infty(S^{d-1})$ and $C_c^\infty(E)$ are both cores of $(\hat{\EE},\hat{\FF})$, it follows that the proof of Theorem~\ref{THM27} is actually obvious. So we omit its details. 

\section{Fukushima's decomposition}\label{SEC4}

In this section, we shall consider the Fukushima's decompositions (Cf. \cite{Fu79, Fu81, OY84} and also \cite{FOT11, CF12}) under our basic assumption \eqref{EQ1FFL} (or equivalently, \eqref{EQ2CCE} by \S\ref{SEC3}). 

\subsection{One-dimensional case}\label{SEC41}

We first explore one-dimensional case. Let $\mathbf{X}$ be the diffusion process in \S\ref{SEC21}, i.e. a minimal diffusion process on $I$ with the scaling function $\ss$ and speed measure $m$. Its associated Dirichlet form is denoted by $(\EE,\FF)$ with the expression \eqref{EQ2FFS}. Further denote all locally martingale additive functionals of finite energy by $\overset{\circ}{\mathcal{M}}_\mathrm{loc}$ and all locally continuous additive functionals of zero energy by $\mathcal{N}_{c,\mathrm{loc}}$. We refer their detailed definitions to \cite[Chapter 5]{FOT11}. Since the coordinate function $f\in \FF_\mathrm{loc}$, it follows from \cite[Theorem~5.5.1]{FOT11} that there uniquely exist an $M^{[f]}\in \overset{\circ}{\mathcal{M}}_\mathrm{loc}$ and an $N^{[f]}\in \mathcal{N}_{c,\mathrm{loc}}$ such that
\begin{equation}\label{EQ41FXT}
	f(X_t)-f(X_0)=M^{[f]}_t+N^{[f]}_t,\quad t\geq 0, \ \mathbf{P}^x\text{-a.s.}, \ \forall x\in I. 
\end{equation}
Hereafter, if not otherwise stated, this kind of equalities hold in the sense of $\mathbf{P}^x$-a.s. for any $x\in I$. 

Next, we shall characterize $M^{[f]}$ and $N^{[f]}$ via the essential determiners $\ss$ and $m$ of $\mathbf{X}$. 
Since $\mathbf{X}$ is a diffusion process, naturally $M^{[f]}$ is also a continuous additive functional. Note that $M^{[f]}$ is uniquely determined by its sharp bracket $\langle M^{[f]}\rangle$, which is a PCAF. Furthermore, $\langle M^{[f]}\rangle$ is also uniquely determined by its Revuz measure $\mu_{\langle M^{[f]}\rangle}$. In the following lemma, we shall give an expression of $\mu_{\langle M^{[f]}\rangle}$ by the scaling function $\ss$. 

\begin{lemma}\label{LM41}
	Assume \eqref{EQ1FFL} is satisfied. Then the Revuz measure of the sharp bracket $\langle M^{[f]}\rangle$ is 
	\[
		\mu_{\langle M^{[f]}\rangle}=\tilde{m},
	\]
where $\tilde{m}$ is given by \eqref{EQ24MXT}. 
\end{lemma}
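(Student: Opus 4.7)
The plan is to identify $\mu_{\langle M^{[f]}\rangle}$ with the (local) energy measure $\mu_{\langle f\rangle}$ of $f$ and then compute the latter explicitly from the scaling function data.

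First I would invoke the general correspondence from \cite[Chapter~5]{FOT11}: for any $u\in \FF_{\mathrm{loc}}$, the unique MAF $M^{[u]}\in\overset{\circ}{\mathcal{M}}_\mathrm{loc}$ appearing in the Fukushima decomposition has the property that the Revuz measure of its sharp bracket $\langle M^{[u]}\rangle$ coincides with the local energy measure $\mu_{\langle u\rangle}$ attached to $u$. Because $f\notin \FF$ in general, this step must be carried out via the truncations $f^{I'}\in C_c^\infty(I)\subset \FF$ constructed in \eqref{FIF}: on any relatively compact open $I'\subset I$ we have $f=f^{I'}$, so $\mu_{\langle f\rangle}|_{I'}=\mu_{\langle f^{I'}\rangle}|_{I'}$ by the strong locality of $\EE$, and an analogous localization on the MAF side yields $\mu_{\langle M^{[f]}\rangle}=\mu_{\langle f\rangle}$.

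Next I would compute $\mu_{\langle f\rangle}$ on a fixed relatively compact $I'\subset I$. Since $f=\mathtt{t}\circ\mathtt{s}$ and condition \textbf{(t)} of Lemma~\ref{LM21} guarantees that $\mathtt{t}$ is absolutely continuous with $\mathtt{t}'\in L^2_{\mathrm{loc}}(J)$, the function $f$ is absolutely continuous with respect to $\mathtt{s}$ on $I'$ with Radon--Nikodym derivative
\[
\frac{df}{d\ss}=\mathtt{t}'\circ \ss,\qquad d\ss\text{-a.e. on }I'.
\]
From the standard identification of the energy measure for the one-dimensional form \eqref{EQ2FFS} (namely $\mu_{\langle u\rangle}=\left(du/d\ss\right)^2 d\ss$ for $u\in \FF$), applied to the truncation $f^{I'}$, I obtain
\[
\mu_{\langle f\rangle}\big|_{I'}=(\mathtt{t}'\circ \ss)^{2}\,d\ss\big|_{I'}.
\]

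Finally I would rewrite the right-hand side in terms of the Lebesgue measure. Absolute continuity of $\mathtt{t}$ gives the change-of-variables identity $dx=d(\mathtt{t}\circ\ss)=(\mathtt{t}'\circ\ss)\,d\ss$ on $I'$, so
\[
(\mathtt{t}'\circ\ss)^{2}\,d\ss=(\mathtt{t}'\circ\ss)\cdot(\mathtt{t}'\circ\ss)\,d\ss=(\mathtt{t}'\circ\ss)\,dx=\tilde{m}(dx),
\]
recalling the definition \eqref{EQ24MXT} of $\tilde{m}$. Letting $I'\uparrow I$ through a relatively compact exhaustion and using inner regularity of both measures yields $\mu_{\langle M^{[f]}\rangle}=\mu_{\langle f\rangle}=\tilde{m}$ on all of $I$.

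The main delicate point will be justifying the localization in the first paragraph: I have to be careful that the truncation $f^{I'}$ genuinely produces the restriction of $M^{[f]}$ (or equivalently of $\mu_{\langle f\rangle}$) to $I'$, which relies on the strong local property of $(\EE,\FF)$ together with the extension of the Revuz correspondence to $\overset{\circ}{\mathcal{M}}_\mathrm{loc}$. The remaining algebraic step is the routine chain-rule computation above.
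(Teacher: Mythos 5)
Your proposal is correct and follows essentially the same route as the paper: localize via truncations $f^{I'}$ agreeing with $f$ on relatively compact $I'$, identify $\mu_{\langle M^{[f]}\rangle}|_{I'}$ with the energy measure $\mu_{\langle f^{I'}\rangle}|_{I'}$ (the paper cites \cite[Theorem~5.5.2]{FOT11} for exactly the localization step you flag as delicate), compute $\mu_{\langle f^{I'}\rangle}=(\mathtt{t}'\circ\ss)^2\,d\ss$ on $I'$, and change variables via $dx=(\mathtt{t}'\circ\ss)\,d\ss$ to obtain $\tilde{m}$. The only presentational difference is that where you quote the identity $\mu_{\langle u\rangle}=(du/d\ss)^2\,d\ss$ as standard, the paper verifies it directly from the formula $\int u\,d\mu_{\langle v\rangle}=2\EE(uv,v)-\EE(v^2,u)$ of \cite[Theorem~5.2.3]{FOT11}, which is a routine computation.
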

\begin{proof}
For any relatively compact open subset $I'$ of $I$, take a  bounded function $f^{I'}\in \mathcal{F}$ such that $f^{I'}=f$ on $I'$. We may also write the Fukushima's decomposition with respect to $f^{I'}$, and its MAF part is denoted by $M^{[f^{I'}]}$. Note that the Revuz measure $\mu_{\langle M^{[f^{I'}]} \rangle}$ equals the energy measure $\mu_{\langle f^{I'}\rangle}$ (Cf. \cite[Theorem~5.2.3]{FOT11}).  By \cite[Theorem~5.2.3]{FOT11} again, we have
	\[
		\int_I u(x)\mu_{\langle f^{I'}\rangle}(dx)=2\mathcal{E}\left(u\cdot f^{I'}, f^{I'}\right)-\mathcal{E}\left(\left(f^{I'}\right)^2,u\right)
	\]
for any $u\in \mathcal{F}\cap C_c(I)$. Since
	\[
	\begin{aligned}
		2\mathcal{E}\left(u\cdot f^{I'}, f^{I'}\right)-\mathcal{E}\left(\left(f^{I'}\right)^2,u\right)&=\int_I \frac{d(u\cdot f^{I'})}{d\ss}\frac{df^{I'}}{d\ss}d\ss-\frac{1}{2}\int_I \frac{d(f^{I'})^2}{d\ss}\frac{du}{d\ss}d\ss  \\
			&=\int_I u\cdot\left(\frac{d f^{I'}}{d\ss}\right)^2 d\ss,
	\end{aligned}
	\]
we may deduce from the regularity of $(\EE,\FF)$  that
	\[
		\int_{I'} u(x)\mu_{\langle f^{I'}\rangle}(dx)=\int_{I'} u\cdot \left(\frac{d f^{I'}}{d\ss}\right)^2 d\ss,\quad u\in C_c(I').
	\]
Then it follows from \cite[Theorem~5.5.2]{FOT11} that
	\[
		\mu_{\langle M^{[f]}\rangle}\big|_{I'}=\mu_{\langle f^{I'}\rangle}\big|_{I'}=\left(\frac{d f^{I'}}{d\ss}\right)^2 d\ss\bigg|_{I'}= \tt'(\ss)^2 d\ss|_{I'}.
	\]
From the fact $I'$ is arbitrary and $dx=\tt'(\ss)d\ss$, we obtain
\[
	\mu_{\langle M^{[f]}\rangle}=\tt'(\ss)^2d\ss=(\tt'\circ\ss)(x)dx=\tilde{m}.
\]
That completes the proof. 
\end{proof}
\begin{remark}
From the lemma above, we know that the assumption $\tilde{m}=m$ implies
\[
	\langle M^{[f]} \rangle_t=t\wedge \zeta,\quad t\geq 0, 
\]
where $\zeta$ is the lifetime of $\mathbf{X}$. 
Thus $M^{[f]}$ is equivalent to a Brownian motion before $\zeta$. 
\end{remark}

In what follows, we shall characterize the zero energy part $N^{[f]}$ in \eqref{EQ41FXT}, but some conceptions should be prepared at first. An additive functional $A$ is said to be of bounded variation if $A_t(\omega)$ is of bounded variation in $t$ on each compact subinterval of $[0,\zeta(\omega))$ for every fixed $\omega$ in the defining set of $A$. It is known that a CAF $A$ is of bounded variation if and only if $A$ can be expressed as a difference of two PCAF's:
\[
	A_t(\omega)=A^1_t(\omega)-A^2_t(\omega),\quad t<\zeta(\omega), 
\]
where $A^1,A^2\in \mathbf{A}^+_c$ and $\mathbf{A}_c^+$ is the set of all PCAFs. Let $\mu_{A^1}$ and $\mu_{A^2}$ be the Revuz measures associated with $A^1$ and $A^2$ respectively. Then
\[
	\mu_A:=\mu_{A^1}-\mu_{A^2}
\]
is called the smooth signed measure of $A$. Note that every Revuz measure with respect to one-dimensional diffusion process $\mathbf{X}$ is Radon (Cf. \cite[Chapter X, Proposition~2.7]{RY99}. Thus the smooth signed measure of $A$ is Radon.

Generally, a measure $\nu$ on $I$ is called a smooth signed measure if there exists a generalized nest $\{F_k: k\geq 1\}$ (Cf. \cite[\S2.2]{FOT11}) such that $1_{F_k}\cdot \nu$ is a finite signed Borel measure charging no set of zero capacity for each $k$, and further $\nu(I\setminus \cup_{k\geq 1} F_k)=0$. Such a generalized nest $\{F_k:k\geq 1\}$ is said to be associated with $\nu$. All the conceptions above may be also extended to local additive functionals, and we refer them to \cite[\S5.5]{FOT11}. 

Since any singleton of $I$ is of positive capacity with respect to $\mathbf{X}$, it follows that
\[
	I=\bigcup_{k\geq 1} F_k, 
\]
for any generalized nest $\{F_k:k\geq 1\}$. 
Furthermore, for a closed subset $F\subset I$, we put
\[
\begin{aligned}
	\FF_F&:=\{u\in \FF: \tilde{u}=0\text{ q.e. on }I\setminus F\}, \\
	\FF_{b,F}&:=\{u\in \FF_b: \tilde{u}=0\text{ q.e. on }I\setminus F\},
\end{aligned}\]
where $\FF_b$ is all bounded functions in $\FF$. Note that since $\mathbf{X}$ is a one-dimensional irreducible diffusion process, the quasi-continuous version $\tilde{u}$ equals $u$ and ``q.e." is indeed ``everywhere". 

\begin{lemma}\label{LM42}
The CAF $N^{[f]}$ in \eqref{EQ41FXT} is of bounded variation, if and only if there exists a smooth signed measure $\nu$ such that for any compact set $K$,
\begin{equation}\label{EQ42EFU}
	\EE(f, u)=\langle \nu, u\rangle, \quad u\in \FF_{b, K}.
\end{equation}
Particularly, the smooth signed measure of $N^{[f]}$ equals $-\nu$, i.e. $\mu_{N^{[f]}}=-\nu$.
\end{lemma}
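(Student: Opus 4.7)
The plan is to reduce the local statement to its global counterpart in \cite{FOT11}, namely the characterization of when the zero-energy part of Fukushima's decomposition for a function in $\FF$ is of bounded variation via an integration-by-parts identity against a smooth signed measure (cf.\ \cite[Theorem~5.4.2]{FOT11}). The reduction uses a cutoff argument based on the strong locality of $(\EE,\FF)$ and on the local character of Fukushima's decomposition \cite[Theorem~5.5.1]{FOT11}.

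Fix a compact set $K \subset I$. I would choose a relatively compact open interval $I' \supset K$ together with a cutoff $h \in C_c^\infty(I)$ such that $h \equiv 1$ on some open neighborhood $V$ of $K$ and $\mathrm{supp}(h) \subset I'$, and set $f^K := f\cdot h$. As in the construction \eqref{FIF}, $f^K \in \FF_b$ and $f^K \equiv f$ on $V$. By strong locality, since any $u \in \FF_{b,K}$ vanishes outside $K \subset V$,
\[
\EE(f,u) = \EE(f^K, u), \quad u \in \FF_{b,K}.
\]
Moreover, the localized Fukushima decomposition forces $N^{[f]}$ and $N^{[f^K]}$ to coincide up to the first exit time from $V$.

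For the forward direction, assume $N^{[f]}$ is of bounded variation with smooth signed Revuz measure $\mu_{N^{[f]}}$. Then $N^{[f^K]}$ inherits bounded variation up to the exit of $V$, and a standard argument (using that $f^K \in \FF$) upgrades this to a global decomposition $N^{[f^K]} \in \mathbf{A}_c^+ - \mathbf{A}_c^+$. When tested against $u \in \FF_{b,K}$, only the restriction of the Revuz measure to $K$ enters the pairing, and this restriction coincides with $\mu_{N^{[f]}}|_K$. Applying \cite[Theorem~5.4.2]{FOT11} to $f^K$ then gives
\[
\EE(f^K, u) = -\langle \mu_{N^{[f^K]}}, u\rangle = -\langle \mu_{N^{[f]}}, u\rangle,
\]
so $\EE(f,u) = \langle \nu, u\rangle$ with $\nu := -\mu_{N^{[f]}}$. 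For the converse, suppose a smooth signed measure $\nu$ satisfies \eqref{EQ42EFU} for every compact $K$. The identity then transfers to $\EE(f^K, u) = \langle \nu, u\rangle$ on $\FF_{b,K}$, and the other half of \cite[Theorem~5.4.2]{FOT11} produces a smooth signed Revuz measure for $N^{[f^K]}$ that agrees with $-\nu$ on $K$. Taking an exhaustion $K_n \uparrow I$ and using the consistency of $N^{[f^{K_n}]}$ with $N^{[f]}$ on the corresponding exit-stopped intervals, I would glue the pieces into a representation of $N^{[f]}$ as a difference of local PCAFs with $\mu_{N^{[f]}} = -\nu$.

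The main obstacle will be the reverse direction, specifically checking that the locally constructed PCAF decompositions of $N^{[f^{K_n}]}$ are mutually compatible so that they glue into a genuine local CAF of bounded variation. Here the two one-dimensional facts highlighted just before the lemma—that every singleton in $I$ has positive capacity (eliminating exceptional sets) and that every smooth signed measure with respect to $\mathbf{X}$ is automatically Radon—should make the pairings $\langle \nu, u\rangle$ well-posed on $\FF_{b,K}$ and remove the need for any delicate quasi-everywhere bookkeeping in the gluing.
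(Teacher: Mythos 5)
Your plan founders on the step you delegate to \cite[Theorem~5.4.2]{FOT11} (or its local version, \cite[Theorem~5.5.4]{FOT11}, which is what the paper actually invokes). Those theorems give the integration-by-parts identity $\EE(f,u)=\langle\nu,u\rangle$ only for $u\in\bigcup_k\FF_{b,F_k}$, where $\{F_k\}$ is \emph{some} generalized nest associated with the measure $\nu$ — not for $u\in\FF_{b,K}$ with $K$ an arbitrary compact set. Since $\FF_{b,K}$ need not sit inside $\bigcup_k\FF_{b,F_k}$, the passage from the nest-bound test class to an arbitrary compact $K$ is precisely the content of Lemma~\ref{LM42}, and your proposal assumes it rather than proving it. The paper's proof is essentially nothing but this upgrade: given $u\in\FF_{b,K}$, one takes uniformly bounded $u_n\in\bigcup_k\FF_{F_k}$ with $\|u_n-u\|_{\EE_1}\to0$, multiplies by a cutoff $\psi\in C_c^\infty(I)$ equal to $1$ on $K$ with support in a bounded interval $(a',b')$, and shows $\|u_n\psi-u\|_{\EE_1}\to0$; the delicate term $\int(u_n-u)^2(d\psi/d\ss)^2d\ss$ is controlled by the one-dimensional sup-norm estimate \cite[(2.2.32)]{CF12} together with the finiteness of $(\psi')^2\cdot\tilde m$ on $(a',b')$, and the Radon property of $\nu$ then gives $\langle\nu,u_n\psi\rangle\to\langle\nu,u\rangle$ by bounded pointwise convergence on $(a',b')$. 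None of this machinery appears in your argument, and without it the identity \eqref{EQ42EFU} on all of $\FF_{b,K}$ does not follow.

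There is a second, independent gap in your forward direction: the claim that a ``standard argument'' upgrades the bounded variation of $N^{[f^K]}$ before the exit time of $V$ to a global decomposition in $\mathbf{A}_c^+-\mathbf{A}_c^+$. On the transition region $\mathrm{supp}(h)\setminus V$ one has $f^K=f\cdot h$ with $h$ non-constant, and bounded variation of $N^{[f\cdot h]}$ there requires $N^{[h]}$ to be of bounded variation. For a general scale function this is \emph{not} automatic for $h\in C_c^\infty(I)$: in scale coordinates the relevant quantity is $h'(\tt)\,\tt'$, so $N^{[h]}$ has bounded variation essentially iff $\tt'$ is locally of bounded variation — which under the hypothesis that $N^{[f]}$ has bounded variation is exactly the conclusion of Proposition~\ref{PRO42}, proved in the paper \emph{using} Lemma~\ref{LM42}. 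Your route is therefore circular, or at least leaves its key localization step unproved. Note also that your converse direction is overengineered: once \eqref{EQ42EFU} holds for every compact $K$, it trivially implies the nest-based identity (any generalized nest of compacts associated with $\nu$ will do, since $\FF_{b,F_k}\subset\FF_{b,K}$ with $K=F_k$), and \cite[Theorem~5.5.4]{FOT11} then yields bounded variation and $\mu_{N^{[f]}}=-\nu$ in one stroke — no exhaustion-and-gluing of PCAFs is needed, which is fortunate, since the compatibility check you flag as the ``main obstacle'' is never carried out in your sketch.
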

\begin{proof}
By \cite[Theorem 5.5.4]{FOT11}, $N^{[f]}$ is of bounded variation if and only if there exists a smooth signed measure $\nu$ such that 
	\begin{equation}\label{EFU}
		\mathcal{E}(f, u)=\langle \nu, u\rangle,\quad u\in \bigcup_{k=1}^\infty\mathcal{F}_{b, F_k}
	\end{equation}
for a generalized nest $\{F_k: k\geq 1\}$ of increasing compact sets associated with $\nu$, i.e. $|\nu|(F_k)<\infty$ for each $k$. Particularly, $\mu_{N^{[f]}}=-\nu$.

Clearly, \eqref{EQ42EFU} implies \eqref{EFU}. Thus we need only to prove the necessity of \eqref{EQ42EFU}. Fix a compact set $K$ and a function $u\in \FF_{b,K}$. Take a bounded open interval $(a',b')$ such that 
\[
	K\subset (a',b')\subset [a',b']\subset I,
\]
and a function $\psi\in C_c^\infty(I)$ such that $0\leq \psi\leq 1$, $\psi=1$ on $K$ and $\text{supp}[\psi]\subset (a',b')$. Clearly, $\psi\in \FF_b$. Since $\{F_k:k\geq 1\}$ is a generalized nest, equivalently, $\cup_{k\geq 1}\FF_{F_k}$ is $\EE_1$-dense in $\FF$, thus we may take a sequence of functions $\{u_n:n\geq 1\}\subset \cup_{k\geq 1}\FF_{F_k}$ such that
\begin{equation}\label{EQ42UNU}
	\|u_n-u\|_{\EE^1}\rightarrow 0,\quad n\rightarrow \infty.
\end{equation}
Note that $\{u_n: n\geq 1\}$ could be taken to be uniformly bounded (Cf. \cite[Theorem~1.4.2 (iii)]{FOT11}). Without loss of generality, we may also assume $u_n\rightarrow u$ pointwisely as $n\rightarrow \infty$. Let 
\[
	v_n:=u_n\cdot \psi.
\]
We assert $\|v_n-u\|_{\EE_1}\rightarrow 0$ as $n\rightarrow 0$. In fact, $\|v_n-u\|_m\leq \|u_n-u\|_m\rightarrow 0$ as $n\rightarrow \infty$. On the other hand, 
\[
\begin{aligned}
	\EE(v_n-u,v_n-u)&=\frac{1}{2}\int_I \left( \frac{d(v_n-u)}{d\ss}\right)^2d\ss  \\
	&=\frac{1}{2}\int_I \left( \frac{d\left((u_n-u)\cdot \psi\right)}{d\ss}\right)^2d\ss  \\
	&\leq \int_I \left(\left(\frac{d(u_n-u)}{d\ss}\right)^2\cdot \psi^2 +(u_n-u)^2\left(\frac{d\psi}{d\ss}\right)^2  \right)d\ss
\end{aligned}\]
Since $\psi$ is bounded, it follows from \eqref{EQ42UNU} that 
\[
	\int_I \left(\frac{d(u_n-u)}{d\ss}\right)^2\cdot \psi^2  d\ss\leq 2\EE(u_n-u,u_n-u)\rightarrow 0,\quad n\rightarrow \infty.
\]
Note that $(d\psi/d\ss)^2d\ss=\left((\psi')^2\cdot (\tt'\circ \ss)^2\right)d\ss=(\psi')^2\cdot \tilde{m}$ is a finite measure supported on $(a',b')$. From \cite[(2.2.32)]{CF12}, we may deduce that there exists a constant $C$ (only depends on $(a',b')$) such that 
\[
\sup_{x\in (a',b')} \left(u_n(x)-u(x)\right)^2\leq C\cdot \EE_1(u_n-u,u_n-u). 
\]
That implies 
\[
	\int_I (u_n-u)^2\left(\frac{d\psi}{d\ss}\right)^2d\ss\leq C\cdot \EE_1(u_n-u,u_n-u)\cdot \int_{a'}^{b'}\left(\frac{d\psi}{d\ss}\right)^2d\ss \rightarrow 0
\]
as $n\rightarrow \infty$. 

Clearly, $v_n\in \cup_{k\geq 1} \FF_{F_k}$, and it follows from \eqref{EFU} that $\EE(f, v_n)=\langle \nu, v_n\rangle$. Since $\|v_n-u\|_{\EE_1}\rightarrow 0$ as $n\rightarrow \infty$, we have $\lim_{n\rightarrow \infty} \EE(f, v_n)=\EE(f,u)$. Furthermore, $v_n$ is uniformly bounded and supported on $(a',b')$, and $v_n\rightarrow u$ pointwisely. Note that $\nu$ is signed Radon (hence a finite measure if it is restricted on $(a',b')$). Thus we can also obtain $\lim_{n\rightarrow \infty} \langle \nu, v_n\rangle=\langle \nu, u\rangle$. Hence
\[
	\EE(f,u)=\langle \nu, u\rangle.
\]
That completes the proof.
\end{proof}

The following proposition gives another characterization equivalent to the fact $N^{[f]}$ is of bounded variation via the scaling function $\ss$. 

\begin{proposition}\label{PRO42}
Assume \eqref{EQ1FFL} is satisfied, and let $N^{[f]}$ be the local CAF of zero energy in \eqref{EQ41FXT}. Then $N^{[f]}$ is of bounded variation, if and only if an a.e. version of $\tt'$ is locally of bounded variation on $J$. Furthermore, if $N^{[f]}$ is of bounded variation, then the smooth signed measure of $N^{[f]}$ is
\begin{equation}\label{EQ41MNF}
\mu_{N^{[f]}}=\frac{1}{2}d\tt_*(\tt'),
\end{equation}
where $d\tt_*(\tt')$ is the image measure of $d\tt'$ with respect to the transform $\tt: J\rightarrow I$. 
\end{proposition}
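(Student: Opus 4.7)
The plan is to use Lemma~\ref{LM42} as a two-sided bridge: both the bounded-variation property of $N^{[f]}$ and the identification of its smooth signed measure are encoded in the existence and shape of a signed measure $\nu$ satisfying $\EE(f,u)=\langle\nu,u\rangle$ on $\FF_{b,K}$ for every compact $K\subset I$, in which case $\mu_{N^{[f]}}=-\nu$. So my first step is to compute $\EE(f,u)$ explicitly on the natural core $\{\varphi\circ\ss:\varphi\in C_c^\infty(J)\}$, which is $\EE_1$-dense in $\FF=\FF_0^{(\ss,m)}$ by definition. Since $f=\tt\circ\ss$ and $\tt$ is absolutely continuous (Lemma~\ref{LM21}), one has $df/d\ss=\tt'\circ\ss$ $d\ss$-a.e., and because $\ss$ is a homeomorphism $I\to J$ pushing $d\ss$ to Lebesgue measure on $J$, the change of variables $y=\ss(x)$ gives the basic identity
\[
\EE(f,\varphi\circ\ss)=\frac{1}{2}\int_I(\tt'\circ\ss)(x)\,\varphi'(\ss(x))\,d\ss(x)=\frac{1}{2}\int_J\tt'(y)\,\varphi'(y)\,dy.
\]

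For \emph{sufficiency}, assume an a.e.\ version of $\tt'$ is locally of bounded variation on $J$, so its Stieltjes measure $d\tt'$ is a locally finite signed Radon measure. Integration by parts (no boundary terms since $\varphi$ is compactly supported) and pushing forward by $\tt$ turn the right-hand side into $-\tfrac12\int_J\varphi\,d\tt'=-\tfrac12\int_I(\varphi\circ\ss)\,d\tt_*(\tt')$. Setting $\nu:=-\tfrac12\,d\tt_*(\tt')$, a signed Radon measure on $I$, I obtain $\EE(f,u)=\langle\nu,u\rangle$ for $u$ in the core. Since singletons (and hence every nonempty subset) have positive capacity for $\mathbf{X}$, every Radon measure on $I$ is smooth, so $\nu$ is smooth; Lemma~\ref{LM42} then yields bounded variation of $N^{[f]}$ together with $\mu_{N^{[f]}}=-\nu=\tfrac12\,d\tt_*(\tt')$, which is \eqref{EQ41MNF}.

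For \emph{necessity}, conversely, Lemma~\ref{LM42} produces a smooth signed measure $\nu$ satisfying $\EE(f,u)=\langle\nu,u\rangle$ on $\FF_{b,K}$ for every compact $K$. Testing against $u=\varphi\circ\ss$ with $\varphi\in C_c^\infty(J)$ and comparing with the basic identity, I read off
\[
\frac{1}{2}\int_J\tt'(y)\,\varphi'(y)\,dy=\int_J\varphi(y)\,d(\ss_*\nu)(y),\qquad\varphi\in C_c^\infty(J),
\]
which says the distributional derivative of $\tt'\in L^1_{\mathrm{loc}}(J)$ equals $-2\,\ss_*\nu$, a locally finite signed Radon measure on $J$. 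Hence $\tt'$ admits a locally BV representative on $J$, and pushing forward by $\tt$ yields $d\tt_*(\tt')=\tt_*(-2\,\ss_*\nu)=-2\nu$, giving $\mu_{N^{[f]}}=-\nu=\tfrac12\,d\tt_*(\tt')$ once again.

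The delicate step I anticipate is extending the identity $\EE(f,u)=\langle\nu,u\rangle$ from the test class $\{\varphi\circ\ss\}$ to the full space $\FF_{b,K}$ that Lemma~\ref{LM42} requires (needed for the sufficiency direction). I would handle this by exactly the multiplication-by-cutoff argument used inside the proof of Lemma~\ref{LM42}: given $u\in\FF_{b,K}$, pick a smooth cutoff $\psi$ equal to $1$ on $K$ and supported in a relatively compact interval $(a',b')\supset K$, approximate $u$ in $\EE_1$-norm by a uniformly bounded sequence from the core, multiply by $\psi$, and pass to the limit on both sides using the finiteness $|\nu|((a',b'))<\infty$ together with the uniform sup-norm estimate \cite[(2.2.32)]{CF12} available in the one-dimensional setting. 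Once this routine extension is in place, both directions of the equivalence and the formula \eqref{EQ41MNF} drop out of the single change-of-variables identity established above.
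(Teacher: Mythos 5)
Your proposal is correct and follows essentially the same route as the paper: both directions pass through Lemma~\ref{LM42}, the change-of-variables identity $\EE(f,\phi\circ\ss)=\tfrac12\int_J\tt'\phi'\,dy$, and Lebesgue--Stieltjes integration by parts; your necessity argument (distributional derivative of $\tt'$ equals $-2\,\ss_*\nu$, hence a locally BV representative, hence $d\tt_*(\tt')=-2\nu$) is exactly the paper's computation, which phrases the same step by writing $\ss_*(\nu)=dF$ and invoking the du Bois-Reymond lemma \cite[Corollary~3.32]{AF03} to get $\tt'+2F=C$ a.e. The one place you diverge is the step you flag as delicate: the paper does not need any extension-by-approximation in the sufficiency direction, because the structural description of $\FF=\FF_0^{(\ss,m)}$ (Cf. \cite{FHY10} or \cite[(2.2.28)]{CF12}) says that \emph{every} $u\in\FF_b$ with compact support is already of the form $\phi\circ\ss$ with $\phi$ absolutely continuous, compactly supported, and $\phi'\in L^2(J)$, so the integration-by-parts identity verifies \eqref{EQ42EFU} directly on all of $\FF_{b,K}$; your approximation scheme is sound but superfluous. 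If you do run it, note a small mismatch as written: with a cutoff $\psi\in C_c^\infty(I)$ the products $u_n\psi$ leave your test class $\{\varphi\circ\ss:\varphi\in C_c^\infty(J)\}$, since $\psi\circ\tt$ is only absolutely continuous; either take the cutoff of the form $\chi\circ\ss$ with $\chi\in C_c^\infty(J)$ so products stay in the core, or first extend your basic identity to absolutely continuous $\phi$ with compact support (integration by parts against the BV measure $d\tt'$ holds in that generality), which again reduces you to the paper's direct argument.
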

\begin{proof}
We first assume $N^{[f]}$ is of bounded variation. Let $\nu$ be the smooth signed measure in Lemma~\ref{LM42}. For any function $u\in \mathcal{F}_b$ with compact support $K$, there exists an absolutely continuous function $\phi$ on $J$ with $\text{supp}[\phi]\subset \ss(K)$ and $\phi'\in L^2(J)$ such that $u=\phi\circ \ss$. It follows that
	\begin{equation}\label{MEF}
		\mathcal{E}(f,u)=\frac{1}{2}\int_I \frac{df}{d\ss}\frac{du}{d\ss}d\ss=\frac{1}{2}\int_J \tt'(x)\phi'(x)dx
	\end{equation}
	and 
	\[
		\langle \nu, u\rangle=\int_I \phi\circ \ss(x) \nu(dx)=\int_{J}\phi(y) \ss_*(\nu)(dy).
	\]
	Since $\ss_*(\nu)$ (i.e. the image measure of $\nu$ via the transform $\ss$) is a Radon measure on $J$, there exists a function of locally bounded variation, say $F$, on $J$ such that
	\[
		\ss_*(\nu)=dF,
	\]
	where $dF$ is the Lebesgue-Stieltjes measure with respect to $F$. 
	Since the support of $u$ (hence of $\phi$) is compact, we may deduce
	\begin{equation}\label{NTI}
	\langle \nu, u\rangle=\int_J \phi(x)dF(x)=-\int_J F(x)\phi'(x)dx,
	\end{equation}
It follows from Lemma~\ref{LM42}, \eqref{MEF} and \eqref{NTI} that
	\begin{equation}\label{IJQ}
		\int_J (\tt'(x)+2F(x))\phi'(x)dx=0
	\end{equation}
	for any $\phi\in C_c^\infty (J)$. Then from \cite[Corollary~3.32]{AF03}, we may conclude
	\[
		\tt'+2F=C,\quad \text{a.e. on }J
	\]
	for some constant $C$. Clearly, $\tt'$ is of locally bounded variation and $\ss_*(\nu)=-1/2d\tt'$. Hence
	\begin{equation}\label{EQ41NTT}
		\nu=-\frac{1}{2}d\tt_*(\tt').
	\end{equation}
	
On the contrary, assume that $\tt'$ is of locally bounded variation. For any $u\in \mathcal{F}_b$ with compact support, we may take an absolutely continuous function $\phi$ on $J$ with compact support and $\phi'\in L^2(J)$ such that $u=\phi\circ \ss$. Thus we have
	\[
		\mathcal{E}(f,u)=\frac{1}{2}\int_J \tt'(x)\phi'(x)dx=-\frac{1}{2}\int_J \phi(x)d\tt'(x)=-\frac{1}{2}\int_I u(x)d\tt_*(\tt')(x).
	\]
Clearly, $\frac{1}{2}d\tt_*(\tt')$ is a smooth signed Radon measure and \eqref{EQ42EFU} holds for $\nu=-\frac{1}{2}d\tt_*(\tt')$. From Lemma~\ref{LM42}, we conclude $N^{[f]}$ is of bounded variation. That completes the proof.
\end{proof}

The condition of $\tt$ in Proposition~\ref{PRO42} looks a little awkward. Particularly, if $\tt\in C^1(J)$ and $\tt'$ is absolutely continuous (i.e. \textbf{(H4')} in \S\ref{SEC24} is satisfied), then $\tt$ satisfies Proposition~\ref{PRO42} naturally. However, we may also give some examples, in which $\tt'$ is only of bounded variation, but not absolutely continuous, see Example~\ref{EXA53}. Furthermore, \textbf{(H4')} owns the following characterization. 

\begin{corollary}\label{COR44}
Assume \eqref{EQ1FFL} is satisfied, and $N^{[f]}$ is of bounded variation. Then \textbf{(H4')} holds, i.e. $\tt\in C^1(J)$ and $\tt'$ is absolutely continuous, if and only if $\mu_{N^{[f]}}\ll \tilde{m}$. 
\end{corollary}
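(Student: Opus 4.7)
The plan is to rewrite both sides of the claimed equivalence as conditions on the Radon measure $d\mathtt{t}'$ on $J$, by pulling everything back through the homeomorphism $\mathtt{t}: J \to I$. Proposition~\ref{PRO42} (whose hypothesis that $N^{[f]}$ is of bounded variation is standing) gives $\mu_{N^{[f]}} = \tfrac{1}{2}\,d\mathtt{t}_*(\mathtt{t}')$, with $\mathtt{t}'$ taken in its locally BV version on $J$. A direct change of variables shows that $\tilde{m}$ is the $\mathtt{t}$-image of $(\mathtt{t}')^2\,dy$ on $J$: for $\phi\ge 0$,
\[
\int_I \phi(x)\,\tilde{m}(dx) = \int_I \phi(x)\,\mathtt{t}'(\mathtt{s}(x))\,dx = \int_J \phi(\mathtt{t}(y))\,\mathtt{t}'(y)^2\,dy.
\]
Since $\mathtt{t}$ is a homeomorphism, absolute continuity between Radon (signed) measures on $I$ is preserved under the bijective pull-back $\mathtt{s}_*$, so
\[
\mu_{N^{[f]}} \ll \tilde{m} \quad\Longleftrightarrow\quad d\mathtt{t}' \ll (\mathtt{t}')^2\,dy \ \text{on}\ J.
\]

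For the sufficiency ($\textbf{(H4')}\Rightarrow\mu_{N^{[f]}}\ll\tilde m$), if $\mathtt{t}\in C^1(J)$ and $\mathtt{t}'$ is absolutely continuous then $d\mathtt{t}'=\mathtt{t}''(y)\,dy$, and the short argument carried out in the text immediately before Corollary~\ref{COR215} shows $\mathtt{t}''=0$ a.e.\ on $\{\mathtt{t}'=0\}$: at almost every such point, $\mathtt{t}'$ is differentiable and attains its minimum value $0$ (since $\mathtt{t}'\ge 0$), which forces $\mathtt{t}''=0$. Hence $\mathtt{t}''\,dy\ll(\mathtt{t}')^2\,dy$, as required.

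For the necessity ($\mu_{N^{[f]}}\ll\tilde m\Rightarrow\textbf{(H4')}$), the displayed equivalence gives $d\mathtt{t}'\ll(\mathtt{t}')^2\,dy\ll dy$, so $d\mathtt{t}'=h(y)\,dy$ for some $h\in L^1_\mathrm{loc}(J)$. A locally BV function whose distributional derivative is Lebesgue-absolutely-continuous agrees a.e.\ with the absolutely continuous function $y\mapsto \mathtt{t}'(y_0)+\int_{y_0}^y h(z)\,dz$. Replacing $\mathtt{t}'$ by this distinguished representative yields a continuous $\mathtt{t}'$, hence $\mathtt{t}\in C^1(J)$ with $\mathtt{t}'$ absolutely continuous, i.e.\ \textbf{(H4')}.

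The only subtle point is the harmless replacement of $\mathtt{t}'$ by a specific pointwise version in the necessity direction to bridge the notions ``locally BV with Lebesgue-absolutely-continuous distributional derivative'' and ``genuinely continuous and absolutely continuous''; everything else reduces to the change of variables above and the elementary observation that a nonnegative a.e.-differentiable function has vanishing derivative a.e.\ on its zero set.
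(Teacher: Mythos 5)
Your proposal is correct and follows essentially the same route as the paper: the sufficiency direction is exactly the computation the paper carries out in the notes before Corollary~\ref{COR215} (that $\mathtt{t}''=0$ a.e.\ on $\{\mathtt{t}'=0\}$), and the necessity direction matches the paper's chain $\mu_{N^{[f]}}=\tfrac{1}{2}d\mathtt{t}_*(\mathtt{t}')\ll\tilde{m}\Rightarrow d\mathtt{t}'\ll(\mathtt{t}')^2\,dy\ll dy$ obtained by pulling back through the homeomorphism $\mathtt{t}$. Your explicit handling of the choice of an a.e.\ representative of $\mathtt{t}'$ to pass from ``$d\mathtt{t}'\ll dy$'' to ``$\mathtt{t}\in C^1(J)$ with $\mathtt{t}'$ absolutely continuous'' is a point the paper leaves implicit, but it does not change the argument.
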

\begin{proof}
In the notes between Lemma~\ref{LM214} and Corollary~\ref{COR215}, we already illustrated that \textbf{(H4')} implies $\mu_{N^{[f]}}\ll \tilde{m}$. On the contrary, since $\mu_{N^{[f]}}= 1/2\cdot d\tt_*(\tt')=(1/2\cdot d\tt') \circ \ss$ by Proposition~\ref{PRO42}, it follows that $\mu^{N^{[f]}}\ll \tilde{m}$ implies $d\tt'\ll \tt'(x)^2dx$. Clearly, $\tt'(x)^2dx\ll dx$, and hence $d\tt'\ll dx$. That means $\tt'$ is absolutely continuous and completes the proof. 
\end{proof}

\subsection{Proofs of \S\ref{SEC24}}

We first prove Lemma~\ref{LM212}, i.e. $\mathfrak{D}$ has uncountable elements. 

\begin{proof}[Proof of Lemma~\ref{LM212}]
Since $\ss$ is not absolutely continuous, we may write the Lebesgue decomposition of $d\ss$ as \eqref{EQ2SGL}, i.e. 
\[
	d\ss=g\cdot \lambda+\kappa,\quad \kappa\perp \lambda,
\]
where $\lambda$ is the Lebesgue measure on $\mathbb{R}$. 
Let $H$ be the set of $\lambda$-zero measure such that $\kappa(\mathbb{R}\setminus H)=0$. Clearly, $\kappa(H)>0$. 

Set $k(x):=\kappa([-x,x])$ for any $x\geq 0$. Since $\ss$ is continuous, it follows that any singleton is of zero $\kappa$-measure, and thus $k$ is increasing and continuous. Fix arbitrary constant $0<c<\kappa(H)=k(\infty)$, we may take a constant $x_c> 0$ such that $k(x_c)=\kappa([-x_c,x_c])=c$. Set
\[
	\ss_c(x):=\int_0^x g(y)dy+\int_0^x 1_{[-x_c,x_c]}(y)\kappa(dy)
\]
for any $x\in \mathbb{R}$. Clearly, $\ss_c$ is strictly increasing and continuous, i.e. a scaling function. Let $(\EE^c,\FF^c)$ be the associated Dirichlet form of the minimal diffusion process with scaling function $\ss_c$ and speed measure $m$. Furthermore, 
\[
	\frac{d\ss_c}{d\ss}=1_{H^c\cup [-x_c,x_c]},\quad d\ss\text{-a.e.}, 
\]
and 
\[
	\frac{d\bar{\ss}}{d\ss_c}=1_{H^c},\quad d\ss_c\text{-a.e.},
\]
where $d\bar{\ss}=g\cdot \lambda$ is the absolutely continuous part of $\ss$. Note that $d\ss(H\cap \{y: |y|>x_c\})=\kappa (H\cap \{y: |y|>x_c\})>0$ and $d\ss_c(H)=\kappa(H\cap [-x_c,x_c])>0$. They imply $(\bar{\EE},\bar{\FF})$ is a proper regular Dirichlet subspace of $(\EE^c,\FF^c)$, and $(\EE^c,\FF^c)$ is a proper regular Dirichlet subspace of $(\EE,\FF)$ (Cf. \cite[Theorem~4.1]{FHY10}. Particularly, $(\EE^c,\FF^c)\in \mathfrak{D}$. 

For two different constants $c_1,c_2\in (0, \kappa(H))$, we may prove similarly that if $c_1<c_2$, then $(\EE^{c_1},\FF^{c_1})$ is a proper regular Dirichlet subspace of $(\EE^{c_2}, \FF^{c_2})$. Hence $\mathfrak{D}$ has uncountable elements. That completes the proof. 
\end{proof}

\begin{remark}
From the above proof, we may also deduce that
\[
	[0, \kappa(\mathbb{R})]\rightarrow \mathfrak{D},\quad c\mapsto (\EE^c,\FF^c)
\] 
is a one-to-one correspondence. Particularly, $0$ corresponds to the smallest element $(\bar{\EE},\bar{\FF})$ of $\mathfrak{D}$, and $\kappa(\mathbb{R})(\leq \infty)$ corresponds to the biggest one $(\EE,\FF)$. 
\end{remark}

Now we turn to prove Theorem~\ref{THM214}. The proof will be divided into several steps. Take a Dirichlet form $(\EE^c,\FF^c)\in \mathfrak{D}$. Its associated diffusion process is denoted by $\mathbf{X}^c=(X^c_t, \mathbf{P}^x_c)_{x\in \mathbb{R}}$. Since $C_c^\infty(\mathbb{R})\subset \FF^c$, it follows that the coordinate function $f\in \FF^c_\mathrm{loc}$. Thus we may write the Fukushima's decomposition of $\mathbf{X}^c$ with respect to $f$, i.e.
\begin{equation}\label{EQ42FXC}
	f(X^c_t)-f(X^c_0)=M^{[f],c}_t+N^{[f],c}_t,\quad t\geq 0,
\end{equation}
where $M^{[f],c}\in \overset{\circ}{\mathcal{M}}^c_\mathrm{loc}, N^{[f],c}\in \mathcal{N}^c_{c,\mathrm{loc}}$, $\overset{\circ}{\mathcal{M}}^c_\mathrm{loc}$ and $\mathcal{N}^c_{c,\mathrm{loc}}$ stand for the sets of local MAFs and local CAFs of zero energy with respect to $\mathbf{X}^c$. Clearly, $M^{[f],c}$ is also continuous MAF, and we denote the Revuz measure of its sharp bracket $\langle M^{[f],c} \rangle$ by $\mu^c_{\langle f\rangle}$, i.e.
\[
	\mu^c_{\langle f\rangle}:= \mu_{\langle M^{[f],c} \rangle}. 
\]
We also write $\mu_{\langle M^{[f]}\rangle}$ in Lemma~\ref{LM41} as $\mu_{\langle f \rangle}$ for short. 

\begin{lemma}\label{LM45}
The Revuz measure of the sharp bracket $\langle M^{[f],c}\rangle$ equals that of $\langle M^{[f]}\rangle$, in other words,
\[
	\mu^c_{\langle f\rangle}=\mu_{\langle f\rangle}. 
\]
\end{lemma}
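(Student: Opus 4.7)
The plan is to reduce the lemma to a direct application of Lemma~\ref{LM41} by exploiting the fact that $(\EE^c,\FF^c)\in \mathfrak{D}$ is itself a regular Dirichlet form of exactly the type treated in \S\ref{SEC21}. Concretely, the process $\mathbf{X}^c$ associated with $(\EE^c,\FF^c)$ is a minimal diffusion on $\mathbb{R}$ with speed measure $m$ and some scaling function $\ss_c$ (with inverse $\tt_c$), and the hypothesis $C_c^\infty(\mathbb{R})\subset \FF^c$ is built into the definition of $\mathfrak{D}$. So Lemma~\ref{LM41} applies verbatim to $(\EE^c,\FF^c)$ and yields
\[
   \mu^c_{\langle f\rangle}=\tilde{m}_c,\qquad \tilde{m}_c(dx):=\tt'_c\circ \ss_c(x)\,dx.
\]
Combined with the identity $\mu_{\langle f\rangle}=\tilde{m}$ from Lemma~\ref{LM41}, the lemma reduces to verifying $\tilde{m}_c=\tilde{m}$ as Radon measures on $\mathbb{R}$.

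To establish this identity I would use Remark~\ref{RM31}, which gives the ``flat'' expression
\[
   \EE(u,v)=\tfrac{1}{2}\int_\mathbb{R} u'(x)v'(x)\,\tilde{m}(dx),\qquad u,v\in C_c^\infty(\mathbb{R}),
\]
together with the analogous formula applied to $(\EE^c,\FF^c)$,
\[
   \EE^c(u,v)=\tfrac{1}{2}\int_\mathbb{R} u'(x)v'(x)\,\tilde{m}_c(dx),\qquad u,v\in C_c^\infty(\mathbb{R}).
\]
The key input is that $(\EE^c,\FF^c)\prec (\EE,\FF)$, so in particular $\EE^c(u,v)=\EE(u,v)$ for every $u,v\in C_c^\infty(\mathbb{R})\subset \FF^c\subset \FF$. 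Therefore the two right-hand sides coincide for all such $u,v$.

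Finally, to deduce $\tilde m=\tilde m_c$ from equality of the bilinear forms on $C_c^\infty(\mathbb{R})$, I would fix an arbitrary test function $\phi\in C_c^\infty(\mathbb{R})$ with $\phi\ge 0$ and, given a compact $K\subset \mathbb{R}$, choose $u\in C_c^\infty(\mathbb{R})$ with $u'(x)^2=\phi(x)$ on a neighborhood of $K$ (e.g.\ set $u(x)=\int_{-\infty}^{x}\sqrt{\phi(y)}\chi(y)\,dy$ for a suitable cutoff $\chi$, then smooth if needed, or alternatively test against $u'(x)v'(x)$ where $(u,v)$ range over a dense family of pairs yielding all $C_c$-functions). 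This shows the measures agree on a determining class of positive test functions and hence as Radon measures.

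There is really no serious obstacle here; the only point that requires a little care is arranging test functions whose derivative products exhaust $C_c(\mathbb{R})$ so that equality of the quadratic forms forces equality of the measures. This is a routine density argument once one notes that $\{u'v':u,v\in C_c^\infty(\mathbb{R})\}$ has dense linear span in $C_c(\mathbb{R})$, and with that the identity $\tilde{m}=\tilde{m}_c$ is immediate, completing the proof.
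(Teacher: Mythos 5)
Your proof is correct, but it takes a genuinely different route from the paper's. The paper never computes either Revuz measure: it simply notes that both $\mu^c_{\langle f\rangle}$ and $\mu_{\langle f\rangle}$ are Radon and tests them against $u\in C_c^\infty(\mathbb{R})$ via the energy-measure formula $\int u\, d\mu_{\langle f\rangle}=2\EE(u\cdot f,f)-\EE(f^2,u)$ (from \cite[Theorem~5.2.3]{FOT11}); since $C_c^\infty(\mathbb{R})\subset \FF^c$ and $(\EE^c,\FF^c)\prec(\EE,\FF)$ force $\EE^c=\EE$ on these expressions, the two integrals agree and the measures coincide. You instead identify both measures explicitly: you apply Lemma~\ref{LM41} to $(\EE^c,\FF^c)$ itself, which is legitimate because the paper's structure theory (\S\ref{SEC21}, \S\ref{SEC3}, via \cite{FHY10, SL15}) shows every element of $\mathfrak{D}$ is again a minimal diffusion with the same speed measure $m$ and some scaling function $\ss_c$, and $C_c^\infty(\mathbb{R})\subset\FF^c$ is built into $\mathfrak{D}$; then you use Remark~\ref{RM31} for both forms plus the subspace property to get $\int u'v'\,d\tilde m=\int u'v'\,d\tilde m_c$ on $C_c^\infty(\mathbb{R})$, and conclude by a density argument. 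Both proofs hinge on the same core fact ($\EE^c=\EE$ on smooth test functions), but the paper's version is shorter and needs no structural input about $\mathbf{X}^c$, whereas yours buys the explicit identity $\mu^c_{\langle f\rangle}=\tilde m$ directly, which is in fact what Theorem~\ref{THM214} consumes downstream (there $\tilde m=\sigma^2 m$ under \textbf{(H3)}). Two small points of care in your write-up: your first construction of test functions is flawed as stated, since $\sqrt{\phi}$ need not be smooth and $u'$ must have integral zero for $u$ to be compactly supported; but your fallback claim is sound and can even be sharpened — every $\phi\in C_c^\infty(\mathbb{R})$ is \emph{exactly} a product $u'v'$: pick $v'\in C_c^\infty$ with $v'\equiv 1$ on $\mathrm{supp}[\phi]$, $\int v'=0$, and $v'\equiv 0$ on the support of an auxiliary bump $\psi$ with $\int\psi=1$, then set $u'=\phi-\left(\int\phi\right)\psi$, so that $u'v'=\phi$ and both primitives lie in $C_c^\infty(\mathbb{R})$. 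With that fix your argument is complete.
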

\begin{proof}
Note that $\mu^c_{\langle f\rangle}$ and $\mu_{\langle f\rangle}$ are both Radon measures on $\mathbb{R}$. On the other hand, it follows from \cite[Theorem~5.2.3]{FOT11} and $(\EE^c,\FF^c)\prec (\EE,\FF)$ that for any $u\in C_c^\infty(\mathbb{R})$, 
\[
	\int_\mathbb{R}u(x)\mu_{\langle f\rangle}^c(dx)=2\EE^c(u\cdot f,f)-\EE^c(f^2, u)=2\EE(u\cdot f,f)-\EE(f^2, u)=\int_\mathbb{R}u(x)\mu_{\langle f\rangle}(dx). 
\]
Thus we can obtain that $\mu^c_{\langle f\rangle}=\mu_{\langle f\rangle}$. That completes the proof.
\end{proof}

\begin{lemma}\label{LM46}
If $N^{[f]}$ is of bounded variation, then $N^{[f],c}$ is also of bounded variation. Furthermore, their smooth signed measures equals, i.e. 
\[
	\mu_{N^{[f]}}=\mu_{N^{[f],c}}. 
\]
\end{lemma}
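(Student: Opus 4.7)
The plan is to apply Lemma \ref{LM42} to the subspace $(\EE^c, \FF^c)$, which is itself a regular strongly local Dirichlet form on $L^2(\mathbb{R}, m)$ containing $C_c^\infty(\mathbb{R})$ by the definition of $\mathfrak{D}$; in particular $f \in \FF^c_\mathrm{loc}$, so the decomposition \eqref{EQ42FXC} is meaningful. The candidate smooth signed measure on the $\mathbf{X}^c$-side of Lemma \ref{LM42} will be $\nu_c := -\mu_{N^{[f]}}$, which by hypothesis and Proposition \ref{PRO42} is signed Radon on $\mathbb{R}$. Smoothness of $\nu_c$ with respect to $\mathbf{X}^c$ follows from the fact that $\{[-k, k]\}_{k \geq 1}$ is a generalized nest for $(\EE^c, \FF^c)$ on which $|\nu_c|$ is finite.

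Next I verify the key identity $\EE^c(f, u) = \langle \nu_c, u\rangle$ for $u \in \FF^c_{b, K}$ and every compact $K \subset \mathbb{R}$. When $u \in C_c^\infty(\mathbb{R})$ with $\mathrm{supp}(u) \subset K$, choose $h \in C_c^\infty(\mathbb{R})$ with $h \equiv 1$ on an open neighborhood of $K$; then $f \cdot h \in C_c^\infty(\mathbb{R}) \subset \FF^c$, and by strong locality together with $(\EE^c, \FF^c) \prec (\EE, \FF)$,
\begin{equation*}
\EE^c(f, u) = \EE^c(f \cdot h, u) = \EE(f \cdot h, u) = \EE(f, u) = \langle \nu_c, u\rangle,
\end{equation*}
the final equality being Lemma \ref{LM42} applied to $(\EE, \FF)$ under the hypothesis that $N^{[f]}$ is of bounded variation.

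To extend the identity to general $u \in \FF^c_{b, K}$, I reproduce the approximation in the second half of the proof of Lemma \ref{LM42}: choose $\psi \in C_c^\infty(\mathbb{R})$ supported in a bounded open interval $(a', b') \supset K$ with $\psi \equiv 1$ on $K$, approximate $u$ in the $\EE^c_1$-norm by a uniformly bounded sequence $u_n \in \cup_k \FF^c_{F_k}$, and set $v_n := u_n \psi$. The only analytic ingredients beyond regularity and strong locality are the one-dimensional Sobolev-type bound \cite[(2.2.32)]{CF12} and the finiteness of $|\nu_c|$ on bounded sets; the former holds for $(\EE^c, \FF^c)$ since it is the Dirichlet form of a minimal one-dimensional diffusion with its own scaling function. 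Passing to the limit in $\EE^c(f, v_n) = \langle \nu_c, v_n\rangle$ yields the identity on $\FF^c_{b, K}$, and Lemma \ref{LM42} applied to $(\EE^c, \FF^c)$ then gives that $N^{[f], c}$ is of bounded variation with $\mu_{N^{[f], c}} = -\nu_c = \mu_{N^{[f]}}$, completing the argument. The main subtlety is confirming that every ingredient of the approximation argument transfers verbatim to the subspace $(\EE^c, \FF^c)$; once this pointwise control is in hand, the rest is bookkeeping.
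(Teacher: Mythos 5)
Your skeleton coincides with the paper's (Lemma~\ref{LM42} necessity for $(\EE,\FF)$, then \cite[Theorem~5.5.4]{FOT11} for $(\EE^c,\FF^c)$, with $\nu_c=-\mu_{N^{[f]}}$ and correct sign bookkeeping), but the middle step --- extending the identity from $C_c^\infty(\mathbb{R})$ to $\FF^c_{b,K}$ by ``reproducing the approximation in the second half of the proof of Lemma~\ref{LM42}'' --- has a genuine gap. That approximation argument requires, as input, the identity $\EE^c(f,\cdot)=\langle\nu_c,\cdot\rangle$ on a class $\bigcup_k\FF^c_{F_k}$ that is $\EE^c_1$-dense in $\FF^c$; in Lemma~\ref{LM42} this input is supplied by \cite[Theorem~5.5.4]{FOT11} from the bounded-variation hypothesis \emph{for the same form}. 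Here your hypothesis concerns $N^{[f]}$, not $N^{[f],c}$, so the only functions on which you know the identity are the smooth ones, and under \textbf{(H2)} the class $C_c^\infty(\mathbb{R})$ is \emph{not} $\EE^c_1$-dense in $\FF^c$ unless $(\EE^c,\FF^c)=(\bar{\EE},\bar{\FF})$: by Theorem~\ref{THM21} its closure is the smallest element of $\mathfrak{D}$, while the scaling function $\ss_c$ of $\mathbf{X}^c$ retains part of the singular measure $\kappa$ whenever $c>0$. Concretely, if $u\in\FF^c_{b,K}$ has $du/d\ss_c\neq 0$ on the singular set of $d\ss_c$, then every $v\in C_c^\infty(\mathbb{R})$ satisfies $dv/d\ss_c=v'\cdot(dx/d\ss_c)=0$ a.e.\ with respect to the singular part of $d\ss_c$, so $\EE^c(u-v,u-v)$ is bounded below by the (fixed, positive) singular contribution to the energy of $u$; no smooth approximation exists. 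Reading your $u_n\in\bigcup_k\FF^c_{F_k}$ literally, with the nest $F_k=[-k,k]$ you chose for $\nu_c$, makes the argument circular instead: $u$ itself already lies in $\FF^c_{b,F_k}$ for large $k$, and the identity for such functions is precisely what is to be proved.

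The repair is shorter than the detour, and it is the paper's actual proof: the necessity direction of Lemma~\ref{LM42} for $(\EE,\FF)$ is stated for \emph{all} $u\in\FF_{b,K}$ and every compact $K$, not only for smooth $u$, and $\FF^c_{b,K}\subset\FF_{b,K}$ since $\FF^c\subset\FF$. Hence for arbitrary $u\in\FF^c_{b,K}$ your own chain $\EE^c(f,u)=\EE^c(f\cdot h,u)=\EE(f\cdot h,u)=\EE(f,u)=\langle\nu_c,u\rangle$ goes through verbatim: the middle equality needs only $f\cdot h\in C_c^\infty(\mathbb{R})\subset\FF^c$, $u\in\FF^c$, and $(\EE^c,\FF^c)\prec(\EE,\FF)$, while the outer two are strong locality. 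With the identity established on $\bigcup_n\FF^c_{b,K_n}$ for a generalized nest of compacts of $(\EE^c,\FF^c)$, \cite[Theorem~5.5.4]{FOT11} yields that $N^{[f],c}$ is of bounded variation with $\mu_{N^{[f],c}}=-\nu_c=\mu_{N^{[f]}}$. No approximation inside $(\EE^c,\FF^c)$ is needed --- and, as shown above, none can work for the proper subspaces that \textbf{(H2)} produces.
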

\begin{proof}
	We first prove $N^{[f],c}$ is also of bounded variation. Take a generalized nest $\{K_n:n\geq 1\}$ of increasing compact sets with respect to $(\EE^c, \FF^c)$. From Lemma~\ref{LM42}, we may deduce that for any $u\in \FF^c_{b,K_n}\subset \FF_{b,K_n}$, 
	\[
		\EE^c(f,u)=\EE(f,u)=-\langle \mu_{N^{[f]}}, u\rangle.
	\]
Clearly, $-\mu_{N^{[f]}}$ is a smooth signed measure associated with $\{K_n: n\geq 1\}$ with respect to $\mathbf{X}^c$.
Then it follows from \cite[Theorem~5.5.4]{FOT11} that $N^{[f],c}$ is of bounded variation. Particularly, 
\[
\mu_{N^{[f],c}}=\mu_{N^{[f]}}=\frac{1}{2}d\tt_*(\tt').
\]
That completes the proof. 
\end{proof}

\begin{proof}[Proof of Theorem~\ref{THM214}] 
Let $(\Omega, \mathcal{G}, \mathbf{Q}^x)_{x\in \mathbb{R}}$ be the probability measure space of $\mathbf{X}^c$ with the adapted filtration $(\mathcal{G}_t)$. Fix an initial distribution $\xi$ on $\mathbb{R}$. 
It follows from \textbf{(H4)}, Proposition~\ref{PRO42} and Lemma~\ref{LM46} that $N^{[f],c}$ is of bounded variation and
\[
	\mu_{N^{[f],c}}(dx)=b(x)m(dx),
\]
where $b\in L^1_\mathrm{loc}(\mathbb{R},m)$ is given by \eqref{EQ24BDT}. By the uniqueness of the Revuz measure, we have
\begin{equation}\label{EQ42NFC}
	N^{[f],c}_t=\int_0^t b(X^c_s)ds,\quad t\geq 0,\ \mathbf{Q}^\xi\text{-a.s.}
\end{equation}
On the other hand, from \textbf{(H3)}, Lemma~\ref{LM41} and Lemma~\ref{LM45}, we may deduce that
\[
	\mu^c_{\langle f\rangle}(dx)=\sigma^2(x)m(dx). 
\]
Thus 
\begin{equation}\label{EQ42MCF}
	\langle M^{[f],c}\rangle_t=\int_0^t \sigma^2(X^c_s)ds,\quad t\geq 0, \ \mathbf{Q}^\xi\text{-a.s.}
\end{equation}

Now we carry everything to the enlargement $(\tilde{\Omega}, \tilde{\mathcal{G}}, \tilde{\mathbf{Q}}^\xi)$ of $(\Omega, \mathcal{G}, \mathbf{Q}^\xi)$. That is to take another probability measure space $(\Omega', \mathcal{G}', \mathbf{Q}')$ with some filtration and let $\tilde{\Omega}:=\Omega\times \Omega'$, $\tilde{\mathcal{G}}:=\mathcal{G}\times \mathcal{G}'$ and $\tilde{\mathbf{Q}}^\xi:=\mathbf{Q}^\xi\times \mathbf{Q}'$. Further set $(\tilde{\mathcal{G}}_t)_{t\geq 0}$ to be the induced filtration of the enlargement. We refer its detailed description to \cite[Chapter V \S1]{RY99}. Particularly, for any $\tilde{\omega}=(\omega, \omega')\in \tilde{\Omega}$, let
\[
\tilde{X}^c_t(\tilde{\omega}):=X^c_t(\omega), \quad \tilde{M}_t^{[f],c}(\tilde{\omega}):=M_t^{[f],c}(\omega),\quad  \tilde{N}_t^{[f],c}(\tilde{\omega}):=N_t^{[f],c}(\omega).
\]
They are all $(\tilde{\mathcal{G}}_t)$-adapted. On the other hand, \eqref{EQ42NFC} and \eqref{EQ42MCF}  are still right if we replace $X^c, M^{[f],c}, N^{[f],c}, \mathbf{Q}^\xi$ by $\tilde{X}^c, \tilde{M}^{[f],c}, \tilde{N}^{[f],c}, \tilde{\mathbf{Q}}^\xi$. 

Furthermore, since $d\langle M^{[f],c}\rangle_t \ll dt$, $\mathbf{Q}^\xi$-a.s., it follows from \cite[Chapter V, Theorem~3.9]{RY99} that there exists a $\tilde{\mathcal{G}}_t$-adapted standard Brownian motion $\tilde{B}=(\tilde{B}_t)_{t\geq 0}$ on $(\tilde{\Omega}, \tilde{\mathcal{G}}, \tilde{\mathbf{Q}}^\xi)$ such that 
\[
	\tilde{M}^{[f],c}_t=\int_0^t \sigma\left(\tilde{X^c_s}\right)d\tilde{B}_s,\quad t\geq 0, \ \tilde{\mathbf{Q}}^\xi\text{-a.s.}
\]
Then from \eqref{EQ42FXC}, we may obtain that
\[
	\tilde{X}^c_t-\tilde{X}^c_0=\int_0^t \sigma\left(\tilde{X}^c_s\right)d\tilde{B}_s+\int_0^t b(\tilde{X}^c_s)ds, \quad t\geq 0, \ \tilde{\mathbf{Q}}^\xi\text{-a.s.}
\]
Note that $(\tilde{X}^c_t)_{t\geq 0}$ is actually equivalent to $(X^c_t)_{t\geq 0}$. That completes the proof. 
\end{proof}

We need to point out in Theorem~\ref{THM214} the Brownian motion cannot be constructed directly on the same probability measure space as $\mathbf{X}^c$, since the lifetime $\zeta^c$ of $\mathbf{X}^c$ is probably finite.  Consequently, the enlargement of the probability measure space is necessary. For example, in the case of Corollary~\ref{COR215}, $\tilde{m}=m$, Lemma~\ref{LM41} and Lemma~\ref{LM45} indicate $\mu_{\langle f\rangle}^c=m$. Thus we have 
\[
	\langle M^{[f],c}\rangle_t=t\wedge \zeta^c,\quad t\geq 0, \quad \mathbf{Q}^\xi\text{-a.s.}.
\]
Then $M^{[f],c}$ is almost a standard Brownian motion. However, if $\zeta^c=\infty$, $\mathbf{Q}^\xi$-a.s. fails, then we still need to construct a standard Brownian motion $(B_t)_{t\geq 0}$ via the enlargement of $(\Omega, \mathcal{G}, \mathbf{Q}^\xi)$ (see \cite[Chapter V, Theorem~1.7]{RY99}) such that $M^{[f],c}_t=B_t$ for $t\leq \zeta^c$. Naturally, one may expect the enlargement could be dropped once providing $\mathbf{Q}^\xi(\zeta^c=\infty)=1$. Sometimes the answer is yes. 

\begin{lemma}
Let $(\Omega, \mathcal{G}, \mathbf{Q}^x)_{x\in \mathbb{R}}$ be the probability measure space of $\mathbf{X}^c$ with the adapted filtration $(\mathcal{G}_t)_{t\geq 0}$. Fix an initial distribution $\xi$ on $\mathbb{R}$. If $\mathbf{Q}^\xi(\zeta^c=\infty)=1$ and $m$ is absolutely continuous with respect to the Lebesgue measure (equivalently, \textbf{(H3')} is satisfied), then there exists a $\mathcal{G}_t$-adapted Brownian motion $(B_t)_{t\geq 0}$ such that 
\[
	M^{[f],c}_t=\int_0^t \sigma(X^c_s)dB_s,\quad t\geq 0, \ \mathbf{Q}^\xi\text{-a.s.}
\]
\end{lemma}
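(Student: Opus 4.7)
The plan is to construct $B$ directly as a stochastic integral against $M^{[f],c}$ on the given filtered space, then invoke Lévy's characterization. This avoids the enlargement step used in the proof of Theorem~\ref{THM214}, and it works precisely because $\zeta^c = \infty$ (so everything is defined globally) and because (H3') forces $\sigma > 0$ Lebesgue-a.e.

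First I would verify that the sample paths of $\mathbf{X}^c$ spend no Lebesgue time in $N := \{x \in \mathbb{R} : \sigma(x) = 0\}$. Under \textbf{(H3')}, $\sigma^2(x) = (\tt' \circ \ss)(x)/h(x)$, and since $h > 0$ a.e.\ and $\tt' \circ \ss > 0$ a.e.\ (as shown just before Corollary~\ref{COR215}), $N$ is a Lebesgue-null set. Because $m(dx) = h(x)\,dx$, we also have $m(N) = 0$, and the $m$-symmetry of $\mathbf{X}^c$ together with the transition density representation yields
\[
\mathbf{E}^x\!\left[\int_0^\infty 1_N(X^c_s)\,ds\right] = \int_0^\infty \!\!\int p^c_s(x,y) 1_N(y)\, m(dy)\, ds = 0,
\]
so $\int_0^t 1_N(X^c_s)\,ds = 0$ for all $t$, $\mathbf{Q}^\xi$-a.s.

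Second I would define, with the convention $1/\sigma := 0$ on $N$,
\[
B_t := \int_0^t \frac{1}{\sigma(X^c_s)}\, dM^{[f],c}_s, \qquad t \geq 0.
\]
By Lemma~\ref{LM41} and Lemma~\ref{LM45}, $\langle M^{[f],c}\rangle_t = \int_0^t \sigma^2(X^c_s)\, ds$, which is finite for every $t$ since $\zeta^c = \infty$ $\mathbf{Q}^\xi$-a.s. Hence the integrand is $(\mathcal{G}_t)$-predictable and
\[
\int_0^t \frac{1}{\sigma^2(X^c_s)}\, d\langle M^{[f],c}\rangle_s = \int_0^t 1_{\{\sigma(X^c_s)>0\}}\, ds = t
\]
by the first step. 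Thus $B$ is a well-defined continuous $(\mathcal{G}_t)$-local martingale with $\langle B\rangle_t = t$, and Lévy's criterion identifies it as a $(\mathcal{G}_t)$-adapted standard Brownian motion. The recovery identity then follows from associativity of the stochastic integral:
\[
\int_0^t \sigma(X^c_s)\, dB_s = \int_0^t 1_{\{\sigma(X^c_s)>0\}}\, dM^{[f],c}_s = M^{[f],c}_t,
\]
where the last equality uses that $\{s : \sigma(X^c_s) = 0\}$ has $d\langle M^{[f],c}\rangle_s$-measure zero.

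The main obstacle is the first step: without the occupation-time argument it is not clear that $\sigma(X^c_s) > 0$ for Lebesgue-a.e.\ $s$, and without this fact the integrand $1/\sigma(X^c_s)$ cannot be controlled and the Lévy identification breaks down. This is also the point where \textbf{(H3')} enters essentially: under the weaker \textbf{(H3)}, the $m$-null set $N$ could carry positive Lebesgue mass and the paths could linger there, spoiling the construction. Everything else is essentially bookkeeping with stochastic integrals.
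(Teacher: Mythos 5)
Your proposal is correct and follows essentially the same route as the paper: the decisive step in both is the identical occupation-time argument (the set $\{\sigma=0\}$ is Lebesgue-null, hence $m$-null under \textbf{(H3')}, and the transition density of $\mathbf{X}^c$ with respect to $m$ gives $\int_0^\infty 1_{\{\sigma=0\}}(X^c_s)\,ds=0$, $\mathbf{Q}^\xi$-a.s.), which shows that $d\langle M^{[f],c}\rangle_t=\sigma^2(X^c_t)\,dt$ is a.s.\ equivalent to Lebesgue measure on the time axis. The only difference is presentational: where the paper then cites \cite[Chapter~V, Proposition~3.8]{RY99} as a black box, you inline that proposition's proof by constructing $B_t=\int_0^t \sigma(X^c_s)^{-1}\,dM^{[f],c}_s$ (with $1/\sigma:=0$ on $\{\sigma=0\}$) and invoking L\'evy's characterization.
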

\begin{proof}
By \cite[Chapter V, Proposition~3.8]{RY99}, we only need to prove
\begin{equation}\label{EQ42DMF}
	d\langle M^{[f],c}\rangle_t =\sigma^2(X^c_t)dt
\end{equation}
is $\mathbf{Q}^\xi$-a.s. equivalent to the Lebesgue measure. Clearly, \eqref{EQ42DMF} is absolutely continuous with respect to the Lebesgue measure, so it suffices to prove for $\mathbf{Q}^\xi$-a.s. $\omega$, 
\[
	t\rightarrow \sigma^2(X^c_t(\omega))
\]
is a.e. strictly positive. Let $A:=\{x\in \mathbb{R}: \sigma(x)=0\}$. Note that $A$ is defined in the sense of a.e., since $\mathtt{t}$ is differential a.e., and its Lebesgue measure $|A|$ equals 
\[
|A|=\int_A dx=\int_{\{y: \tt'\circ\ss(y)=0\}} \tt'\circ \ss d\ss=0. 	
\] 
Put all the points at where $\tt'\circ \ss$ or $h$ do not exist (also of zero Lebesgue measure) into $A$, where $h$ is the density function of $m$ with respect to the Lebesgue measure. Then it still holds $|A|=0$, and moreover, 
\begin{equation}\label{EQ42TXC}
	\{t: \sigma^2(X^c_t)\text{ is not strictly positive}\}\subset \{t: X^c_t\in A\}.
\end{equation}
That also implies $m(A)=0$. 
Since the probability transition semigroup $P_t(x,dy)$ of $\mathbf{X}^c$ has a density function $p_t(x,y)$ with respect to $m$, we may deduce that
\[
	\mathbf{Q}^\xi\left(\int_0^\infty 1_{A}(X^c_t)dt\right)=\int_0^\infty dt\int_\mathbb{R}\xi(dx) \int_{\mathbb{R}} p_t(x,y)1_A(y)m(dy)=0. 
\]
Thus, $\int_0^\infty1_{A}(X^c_t)dt=0$,  $\mathbf{Q}^\xi$-a.s. In other words, the Lebesgue measure of $\{t: X^c_t\in A\}$ is zero. From \eqref{EQ42TXC}, we can obtain $t\rightarrow \sigma^2(X^c_t(\omega))$ is a.e. strictly positive. That completes the proof. 
\end{proof}

\begin{remark}\label{RM49}
If $\mathbf{Q}^x(\zeta^c=\infty)=1$ for any $x\in \mathbb{R}$, then $\mathbf{X}^c$ is said to be conservative. Clearly, the conservativeness of $\mathbf{X}^c$ indicates 
\[
	\mathbf{Q}^\xi(\zeta^c=\infty)=1,
\]
for any initial distribution $\xi$. On the other hand, it is known that $\mathbf{X}^c$ is conservative, if and only if neither of the boundary points is approachable in finite time. The latter assertion can also be characterized by the scaling function $\ss_c$ and speed measure $m$. We refer the details to \cite[Example~3.5.7]{CF12}. Particularly, if $c_1,c_2\in [0, \kappa(\mathbb{R})]$ and $c_1<c_2$, then the conservativeness of $\mathbf{X}^{c_1}$ indicates the conservativeness of $\mathbf{X}^{c_2}$. 
\end{remark}

Finally, let us assert the accessible assumptions \textbf{(H3')} and \textbf{(H4')} are stronger than \textbf{(H3)} and \textbf{(H4)}. Then Corollary~\ref{COR215} is obvious by Theorem~\ref{THM214}. 

\begin{proof}[Proof of Lemma~\ref{LM214}]
\textbf{(H3')} implies $m$ is equivalent to the Lebesgue measure. Then it follows from \eqref{EQ24MXT} that $\tilde{m}\ll m$, in other words, \textbf{(H3)} is satisfied. On the other hand, \textbf{(H4')} indicates $\tt'$ is of bounded variation. Note that in the notes after Theorem~\ref{THM214}, we already proved $d\tt_*(\tt')\ll \tilde{m}$. Thus we also have $d\tt_*(\tt')\ll m$ and so \textbf{(H4)} is checked. That completes the proof.
\end{proof}

\subsection{Multi-dimensional cases}

In this section, we shall consider the Fukushima's decomposition with respect to the energy form outlined in Lemma~\ref{LM28}. The notations of this section are inherited from \S\ref{SEC23}. Particularly, we always assume that 
\[
	C_c^\infty(U)\subset \FF
\]
and the MAF $M^{[f]}$ in \eqref{EQ23FXT} is equivalent to a Brownian motion. The second assumption implies the Revuz measure of the sharp bracket $\langle M^{[f^i]}\rangle$ equals $m$, i.e. $\mu_{\langle M^{[f^i]}\rangle}=m$ for each $1\leq i\leq d$. Then it follows from \cite[Theorem~4.3.8]{CF12} that $f^i_*m$ is absolutely continuous with respect to the Lebesgue measure on $\mathbb{R}$, where $f^i_*m$ is the image measure of $m$ via the transform $f^i: U\rightarrow \mathbb{R}$. Furthermore, Proposition~\ref{PRO211} indicates if $N^{[f]}$ is of bounded variation, then we almost have $m\ll \lambda_U$, where $\lambda_U$ is the Lebesgue measure on $U$. Thus it is not sudden to further assume 
\[
	m\ll \lambda_U.
\]
Naturally, write $m(dx)=\rho(x)dx$ for some a.e. strictly positive function $\rho\in L^1_\mathrm{loc}(U)$. 

\begin{proof}[Proof of Proposition~\ref{PRO211}]
Let $\{K_n:n\geq 1\}$ be the common generalized nest of increasing compact sets associated with $\{\mu_{N^{[f^i]}}: 1\leq i\leq d\}$ such that for any $u\in \cup_{n\geq 1}\FF_{b, K_n}$, 
\begin{equation}\label{EQ43EUF}
	\EE(u,f^i)=-\langle \mu_{N^{[f^i]}}, u\rangle. 
\end{equation}
We assert \eqref{EQ43EUF} also holds for any $u\in C_c^\infty(U)$. In fact, fix a function $u\in C_c^\infty(U)$. Let $K:=\text{supp}[u]$, and take a relatively compact open set $G$ such that $K\subset G\subset \bar{G}\subset U$. Note that $\{G\cap K_n: n\geq 1\}$ is a generalized nest of the part Dirichlet form $(\EE^G,\FF^G)$ (Cf. \cite[Theorem~3.3.8]{CF12}). Particularly, $\cup_{n\geq 1}\FF_{G\cap K_n}$ is $\EE_1$-dense in $\FF^G$. Since $u\in \FF^G$, we may take a sequence $\{u_k: k\geq 1\}\subset \cup_{n\geq 1}\FF_{G\cap K_n}$ such that $\{u_k: k\geq 1\}$ is uniformly bounded and $u_k$ is $\EE_1$-convergent to $u$ as $k\rightarrow \infty$. Without loss of generality, by \cite[Theorem~2.1.4]{FOT11}, we may also assume $u_k\rightarrow u$, q.e. as $k\rightarrow \infty$.  Clearly, $u_k\in \cup_{n\geq 1} \FF_{b,K_n}$. Thus
\[
	\EE(u_k, f^i)=-\langle \mu_{N^{[f^i]}}, u_k\rangle.
\]
Since $|\mu_{N^{[f^i]}}|(G)<\infty$, by letting $k\rightarrow \infty$, we may obtain $\EE(u, f^i)=-\langle \mu_{N^{[f^i]}}, u\rangle$. 

Now fix a function $h\in C_c^\infty(U)$ with $\text{supp}[h]=W$. For any $u\in C_c^\infty(U)$ and $1\leq i\leq d$, we have
\[
\begin{aligned}
	-\langle \mu_{N^{[f^i]}}, u\cdot h\rangle&=\EE(u\cdot h, f^i)
		\\&=\frac{1}{2}\int_U \frac{\partial (u\cdot h)}{\partial x_i} m(dx)
		\\&=\frac{1}{2}\left(\int_U \frac{\partial u}{\partial x_i}(x)h(x)m(dx)+\int_U \frac{\partial h}{\partial x_i}(x)u(x)m(dx)\right).
\end{aligned}\]
It follows 
\[
	\int_U \frac{\partial u}{\partial x_i}(x)h(x)m(dx)=-2\langle h\cdot \mu_{N^{[f^i]}},u\rangle-\int_U u(x)\frac{\partial h}{\partial x_i}(x)m(dx),
\]
and hence
\[
	\bigg|\int_U \frac{\partial u}{\partial x_i}(x)h(x)m(dx)\bigg|\leq \left( 2\cdot|h\cdot \mu_{N^{[f^i]}}|(W)+\int_{W}\bigg|\frac{\partial h}{\partial x_i}(x)\bigg|m(dx)\right)\cdot ||u||_u
\]
for any $u\in C_c^\infty(U)$ and $1\leq i\leq d$, where $\|u\|_u$ is the uniform norm of $u$. From \cite[Lemma~7.2.2.1]{BH91}, we may deduce that $h\cdot m\ll \lambda_U$. Since $h$ is arbitrary in $C_c^\infty(U)$, we finally achieve $m\ll \lambda_U$. That completes the proof. 
\end{proof}

\begin{remark}
From the above proof, we may easily find that Proposition~\ref{PRO211}  still holds, if $\mu_{N^{[f^i]}}$ only satisfies that there exists a sequence of increasing relatively compact open sets $G_n$ with $\cup_{n\geq 1}G_n=U$ such that $\mu_{N^{[f^i]}}(G_n)<\infty$. If so, we may deduce that \eqref{EQ43EUF} holds for any $u\in C_c^\infty(G_n)$. Then take any fixed function $h\in C_c^\infty(G_n)$, it also follows $h\cdot m\ll \lambda_U$. 
\end{remark}

Our focus is to characterize the local CAF $N^{[f]}$ of zero energy in \eqref{EQ23FXT}. Recall that $N^{[f]}$ is of bounded variation, if each component $N^{[f^i]}$ is of bounded variation for $1\leq i\leq d$, where $N^{[f]}=\left(N^{[f^1]},\cdots, N^{[f^d]}\right)$. The following lemma gives a sufficient condition to the fact that $N^{[f]}$ is of bounded variation and also an explicit expression of $N^{[f]}$. 

\begin{lemma}\label{LM410}
Let $(\EE,\FF)$ be the energy form in Lemma~\ref{LM28}. Assume $C_c^\infty(U)$ is a special standard core of $(\EE,\FF)$, and $m(dx)=\rho(x)dx$ with $\partial \rho/\partial x_i \in L^1_\mathrm{loc}(U)$ for $1\leq i\leq d$, where $\partial \rho/\partial x_i$ is the weak derivative of $\rho$. Then $N^{[f]}$ in \eqref{EQ23FXT} is of bounded variation, and 
\begin{equation}\label{EQ43NFT}
	N^{[f]}_t=\frac{1}{2}\int_0^t \frac{\nabla \rho }{\rho}\left(X_s \right)ds,\quad t\geq 0. 
\end{equation}
\end{lemma}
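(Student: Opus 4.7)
The plan is to mirror the one-dimensional argument of Proposition~\ref{PRO42}, using the characterization of bounded-variation zero-energy parts through smooth signed measures (Lemma~\ref{LM42} and Theorem~5.5.4 of \cite{FOT11}). The new inputs are the energy-form identity \eqref{EQ23EUV}, which reduces $\EE(f^i,u)$ to an elementary integral, and the weak differentiability of $\rho$, which enables an integration by parts.

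For any $u\in C_c^\infty(U)$ and $1\le i\le d$, I would first localize $f^i$ by multiplying it with a cutoff $\chi\in C_c^\infty(U)$ equal to one on $\mathrm{supp}(u)$; strong locality together with \eqref{EQ23EUV} then yields
$$
\EE(f^i,u)=\tfrac12\int_U \partial_i u(x)\,\rho(x)\,dx=-\tfrac12\int_U u(x)\,\partial_i\rho(x)\,dx=-\langle \nu^i,u\rangle,
$$
where $\nu^i:=-\tfrac12\partial_i\rho\cdot\lambda_U$ is a signed Radon measure on $U$ (since $\partial_i\rho\in L^1_{\mathrm{loc}}(U)$). I would then verify that $\nu^i$ is a smooth signed measure associated with any exhausting sequence $\{K_n\}$ of compact subsets of $U$: such $\{K_n\}$ is a generalized nest by regularity of $(\EE,\FF)$, $|\nu^i|(K_n)<\infty$, and capacity-zero sets have zero $m$-measure, hence zero Lebesgue measure (because $\rho>0$ a.e.), so $\nu^i$ charges no capacity-zero set.

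To invoke the FOT11 criterion the identity $\EE(f^i,u)=\langle \nu^i,u\rangle$ must be extended from $C_c^\infty(U)$ to $\bigcup_n \FF_{b,K_n}$. Here the special-standard-core hypothesis is crucial: given $u\in\FF_{b,K}$, choose a relatively compact open $G\supset K$ and $\psi\in C_c^\infty(U)$ with $\psi\equiv 1$ on $K$ and $\mathrm{supp}(\psi)\subset G$; approximate $u$ in $\EE_1$ by uniformly bounded $u_k\in C_c^\infty(U)$, and replace $u_k$ by $u_k\psi$ to force common compact support inside $G$. The same cutoff estimate as in Lemma~\ref{LM42} shows $u_k\psi\to u$ in $\EE_1$, while $|\nu^i|(G)<\infty$ lets dominated convergence pass the pairing to the limit. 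Applying Theorem~5.5.4 of \cite{FOT11} then gives that $N^{[f^i]}$ is of bounded variation with $\mu_{N^{[f^i]}}=-\nu^i=\tfrac12(\partial_i\rho/\rho)\cdot m$, and the Revuz correspondence converts this into the claimed PCAF $\tfrac12\int_0^t(\partial_i\rho/\rho)(X_s)\,ds$; assembling the $d$ components yields \eqref{EQ43NFT}.

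The main obstacle I anticipate is precisely this extension step: the test-function class in Theorem~5.5.4 of \cite{FOT11} is $\bigcup_n\FF_{b,K_n}$, whereas the integration-by-parts identity is only immediate on $C_c^\infty(U)$, and one has to control $\EE_1$-convergence together with the compact-support constraint simultaneously. The special-standard-core hypothesis is precisely what makes this passage to the limit possible; once it is secured, the remainder reduces to routine Revuz-correspondence bookkeeping.
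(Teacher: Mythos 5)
Your proof is correct and its computational core (integration by parts producing the signed Radon measure $\tfrac12\,\partial_i\rho\,dx$, then the Revuz correspondence) is the same as the paper's, but you take a longer route to the bounded-variation conclusion. The paper does not perform any extension step: having verified $\EE(f^i,u)=-\tfrac12\int_U u\,\partial_i\rho\,dx$ for $u\in C_c^\infty(U)$, it observes that $-\tfrac12\,\partial_i\rho\,dx$ is a signed Radon measure charging no set of zero capacity and invokes \cite[Corollary~5.5.1]{FOT11} directly --- that corollary is precisely the version of the criterion in which the identity need only be checked on a special standard core, which is exactly how the hypothesis on $C_c^\infty(U)$ enters. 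The obstacle you single out as the crux, namely extending the identity from $C_c^\infty(U)$ to $\bigcup_n\FF_{b,K_n}$ so as to apply \cite[Theorem~5.5.4]{FOT11}, is therefore unnecessary: you are in effect re-proving the relevant special case of Corollary~5.5.1 by hand, mirroring the paper's one-dimensional argument in Lemma~\ref{LM42}. Your re-proof buys self-containedness at the cost of length, and it does go through, but two details need repair. First, a sign slip: with your definition $\nu^i=-\tfrac12\,\partial_i\rho\cdot\lambda_U$, the identity reads $\EE(f^i,u)=\langle\nu^i,u\rangle$, not $-\langle\nu^i,u\rangle$; your final assignment $\mu_{N^{[f^i]}}=-\nu^i=\tfrac12(\partial_i\rho/\rho)\cdot m$ is nonetheless the correct measure under the convention of Lemma~\ref{LM42}. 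Second, ``the same cutoff estimate as in Lemma~\ref{LM42}'' cannot be imported verbatim: that proof controls $\sup_{(a',b')}|u_n-u|$ through \cite[(2.2.32)]{CF12}, a strictly one-dimensional Sobolev-type bound with no analogue for $d\geq 2$. Fortunately the needed bound is easier in your setting: because $(\EE,\FF)$ restricted to $C_c^\infty(U)$ is the energy form \eqref{EQ23EUV}, the energy measure of the cutoff satisfies $\mu_{\langle\psi\rangle}(dx)=|\nabla\psi(x)|^2\,m(dx)$ with bounded, compactly supported density, so $\int_U(u_k-u)^2\,d\mu_{\langle\psi\rangle}\leq C\,\|u_k-u\|_{L^2(m)}^2\rightarrow 0$, and the Leibniz-type inequality for energy measures then yields $\EE_1$-convergence of $u_k\psi$ to $u$ without any sup-norm control. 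With that substitution, and the sign corrected, your argument is complete and equivalent to the paper's.
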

\begin{proof}
Fix $i$, take any function $u\in C_c^\infty(U)$, we have
\[
\begin{aligned}
	\EE(f^i,u)&=\frac{1}{2}\int_U \nabla f^i(x)\cdot \nabla u(x)\rho(x)dx \\
		&=\frac{1}{2}\int_U \frac{\partial u}{\partial x_i}(x)\rho(x)dx \\
		&=-\frac{1}{2}\int_U u(x)\frac{\partial \rho}{\partial x_i}(x)dx. 
\end{aligned}\]
It follows from $\partial \rho/\partial x_i \in L^1_\mathrm{loc}(U)$ that $-1/2\cdot \partial \rho/\partial x_i\cdot dx$ is a signed Radon measure on $U$, and clearly charges no set of zero capacity. Thus from \cite[Corollary~5.5.1]{FOT11}, we may deduce that $N^{[f^i]}$ is of bounded variation, and its smooth signed measure is
\[
	\mu_{N^{[f^i]}}(dx)=\frac{1}{2}\frac{\partial \rho}{\partial x_i}(x)dx. 
\]
Furthermore, since $\rho>0$, a.e., we also have
\[
	\mu_{N^{[f^i]}}(dx)=\frac{1}{2}\left(\frac{1}{\rho}\frac{\partial \rho}{\partial x_i}\right)(x) m(dx).
\]
Note that $1/\rho \cdot \partial \rho/\partial x_i\in L^1_\mathrm{loc}(U,m)$. Hence we can obtain 
\[
	N^{[f^i]}_t=\frac{1}{2}\int_0^t \left(\frac{1}{\rho}\frac{\partial \rho}{\partial x_i}\right)(X_s)ds,\quad t\geq 0. 
\]
Therefore, $N^{[f]}$ may be written as
\[
N^{[f]}_t=\frac{1}{2}\int_0^t \frac{\nabla \rho }{\rho}\left(X_s \right)ds,\quad t\geq 0. 
\]
That completes the proof.
\end{proof}

Now we take to prove the main result of this section, i.e. Theorem~\ref{THM29} in \S\ref{SEC23}. Clearly, the smooth density function $\rho$ in Theorem~\ref{THM29} satisfies the assumption of Lemma~\ref{LM410}. Particularly, if any of the equivalent assertions in Theorem~\ref{THM29} holds, then $N^{[f]}$ in \eqref{EQ23FXT} could be written as \eqref{EQ43NFT}. Furthermore, denote
\[
	\bar{\EE}(u,v):=\EE(u,v)=\frac{1}{2}\int_U \nabla u(x)\cdot \nabla v(x)m(dx)
\]
for any $u,v\in C_c^\infty(U)$. Denote its smallest closed extension in $(\EE,\FF)$ by $(\bar{\EE}, \bar{\FF})$. Clearly, $(\bar{\EE},\bar{\FF})$ is a regular Dirichlet subspace of $(\EE,\FF)$. Particularly, we may also write the Fukushima's decomposition with respect to $(\bar{\EE},\bar{\FF})$: 
\[
	f(\bar{X}_t)-f(\bar{X}_0)=\bar{M}^{[f]}_t+\bar{N}^{[f]}_t,\quad t\geq 0,
\]
where $(\bar{X}_t)_{t\geq 0}$ is the associated diffusion process of $(\bar{\EE},\bar{\FF})$. Since $(\bar{\EE},\bar{\FF})$ satisfies all the assumptions in Lemma~\ref{LM410}, we can obtain $\bar{N}^{[f]}$ is of bounded variation (no matter whether $N^{[f]}$ is or is not). 

\begin{proof}[Proof of Theorem~\ref{THM29}]
The fact $\textbf{(1)}\Rightarrow \textbf{(2)}$ is obvious from Lemma~\ref{LM410}.
Now, we assume \textbf{(2)} holds. From the proof of Proposition~\ref{PRO211}, we know that for any function $u\in C_c^\infty(U)$, 
\[
	\EE(u,f^i)=-\langle \mu_{N^{[f^i]}}, u\rangle. 
\]
On the other hand, for any $u\in C_c^\infty(U)$, 
	\[
		\mathcal{E}(u,f^i)=\frac{1}{2}\int_U \frac{\partial u}{\partial x_i}(x)\rho(x)dx=-\frac{1}{2}\int_U\frac{\partial \rho}{\partial x_i}(x)u(x)dx.
	\]
Then we can conclude
\[
	 \mu_{N^{[f^i]}}=\frac{1}{2}\frac{\partial \rho}{\partial x_i}dx=\frac{1}{2\rho}\frac{\partial \rho}{\partial x_i}\cdot m.
\]
Particularly, there exists a generalized nest $\{K_n:n\geq 1\}$ of increasing compact subsets of $U$ such that
\begin{equation}\label{MUFI}
\mathcal{E}(u,f^i)=-\frac{1}{2}\int_U u(x)\frac{\partial \rho}{\partial x_i} dx,\quad u\in \bigcup_{n\geq 1}\mathcal{F}_{b,K_n}
\end{equation}
Note that $\cup_{n\geq 1}\FF_{b,K_n}$ is $\mathcal{E}_1^{\frac{1}{2}}$-dense in $\mathcal{F}$. 
Fix $n$ and a function $v\in \mathcal{F}_{b,K_n}$, let us compute the energy measure $\mu_{\langle f^i,v\rangle}$ (Cf. \cite[\S5.2]{FOT11}. For any $g\in C_c^\infty(U)$, it follows from \cite[Theorem~5.6.2]{FOT11} and $\mu_{\langle f^i, f^j\rangle}=\delta_{ij}\cdot m$  that
\[
	gd\mu_{\langle f^i, v\rangle}=d\mu_{\langle f^i,g\cdot v\rangle}-vd\mu_{\langle f^i,g\rangle}
\]
and 
\[
	\mu_{\langle f^i,g\rangle}=\frac{\partial g}{\partial x_i}\cdot m.
\]
Hence from \eqref{MUFI}, we can deduce that
\begin{equation}\label{IUGM}
\begin{aligned}
	\int_U gd\mu_{\langle f^i, v\rangle}&= 2\mathcal{E}(f^i, g\cdot v)-\int_U v(x)\frac{\partial g}{\partial x_i}(x) \rho(x) dx\\
		&=-\int_U g(x) v(x)\frac{\partial \rho}{\partial x_i}(x) dx-\int_U v(x)\frac{\partial g}{\partial x_i}(x) \rho(x) dx\\
		&=-\int_U v(x)\frac{\partial (g\cdot \rho)}{\partial x_i}(x)dx.
\end{aligned}\end{equation}
Now for any $h \in C_c^\infty(U)$, clearly $\partial h/\partial x_i\in C_c^\infty(U)$ for each $1\leq i\leq d$. By using \cite[Theorem~5.6.2]{FOT11} again, it follows from \eqref{IUGM} that
\[\begin{aligned}
	\mathcal{E}(h,v)&=\frac{1}{2}\int_Ud\mu_{\langle h,v\rangle}\\
	&=\frac{1}{2}\int_U \sum_{i=1}^d \frac{\partial h}{\partial x_i}d\mu_{\langle f^i,v\rangle}\\
	&=-\frac{1}{2}\sum_{i=1}^d \int_Uv(x)\frac{\partial}{\partial x_i}\left(\rho \frac{\partial h}{\partial x_i}\right)(x)dx\\
	&=(-\mathcal{L}_0h,v)_m,
\end{aligned}\]
where the operator $\mathcal{L}_0$ is defined by 
\begin{equation}\label{MDAC}
\begin{aligned}
	&\mathcal{D}(\mathcal{L}_0)=C_c^\infty(U),\\
	&\mathcal{L}_0h=\frac{1}{2}\sum_{i=1}^d\frac{1}{\rho}\frac{\partial}{\partial x_i}\left(\rho\frac{\partial u}{\partial x_i}\right),\quad h\in \mathcal{D}(\mathcal{L}_0).
\end{aligned}
\end{equation}
Since $\cup_{n\geq 1}\FF_{b,K_n}$ is $\mathcal{E}_1^{\frac{1}{2}}$-dense in $\mathcal{F}$, we can deduce that for any fixed $h\in C_c^\infty(U)$,
\[
	\mathcal{E}(h,v)=(-\mathcal{L}_0u,v)_m,\quad v\in \mathcal{F}.
\]
Hence $h\in \mathcal{D}(\mathcal{L})$ and $\mathcal{L}h=\mathcal{L}_0h$. That indicates \textbf{(3)}.

Finally, assume \textbf{(3)} holds, i.e. $C_c^\infty(U)$ is a subset of  $\mathcal{D}(\mathcal{L})$. For any $u,v\in C_c^\infty(U)\subset \mathcal{D}(\mathcal{L})\cap \bar{\mathcal{F}}$, we have
\begin{equation}\label{MEUVB}
	\begin{aligned}
		\bar{\mathcal{E}}(u,v)&=\frac{1}{2}\sum_{i=1}^d\int_U \frac{\partial u}{\partial x_i}(x)\frac{\partial v}{\partial x_i}(x)\rho(x)dx\\
			&=\frac{1}{2}\sum_{i=1}^d\int_U-\left(\frac{1}{\rho}\frac{\partial}{\partial x_i}\left(\rho\frac{\partial u}{\partial x_i}\right)\right)(x)v(x)\rho(x)dx\\
			&=\left(-\frac{1}{2}\sum_{i=1}^d\frac{1}{\rho}\frac{\partial}{\partial x_i}\left(\rho\frac{\partial u}{\partial x_i}\right),v\right)_m
			\\&=(-\mathcal{L}_0u,v)_m. 
	\end{aligned}\end{equation}
That implies
\[
	(-\mathcal{L}u,v)_m=\mathcal{E}(u,v)=\bar{\mathcal{E}}(u,v)=(-\mathcal{L}_0u,v)_m.
\]
Since $C_c^\infty(U)$ is dense in $L^2(U,m)$, it follows that $\mathcal{L}u=\mathcal{L}_0u$ for any $u\in C_c^\infty(U)$. That indicates $\mathcal{L}$ is a Markovian self-adjoint extension (Cf.  \cite[\S 3.3]{FOT11}) of $\mathcal{L}_0$. Then from \cite[Theorem~3.3.1]{FOT11}, we know that
\begin{equation}\label{MFSU}
	\mathcal{F}\subset \left\{u\in L^2(U,m):\sum_{i=1}^d\int_U \left(\frac{\partial u}{\partial x_i}\right)^2m(dx)<\infty\right\}.
\end{equation}
Now fix any function $u\in \mathcal{F}\cap C_c(U)$. Set $K:=\text{supp}[u]$, which is a compact subset of $U$. Take a relatively compact open set $G$ such that $K\subset G\subset \bar{G}\subset U$. Since $\rho$ is continuous and strictly positive, there exist two positive constants $c,C$ such that 
\[
	c\leq \rho(x)\leq C
\]
for any $x\in G$. As a sequel, the $\bar{\mathcal{E}}_1$-norm of $\bar{\mathcal{F}}_G$ is equivalent to the norm of $H_0^1(G)$, where $H^1_0(G)$ is the closure of $C_c^\infty(G)$ in the 1-Sobolev space $H^1(G)$. Particularly, $\bar{\mathcal{F}}_G= H_0^1(G)$. It follows from \eqref{MFSU} and $u\in \mathcal{F}\cap C_c(G)$ that $u\in H^1_0(G)$. Thus there exists a sequence $\{u_n:n\geq 1\}\subset C_c^\infty(G)$ such that $u_n$ is convergent to $u$ in $H_0^1$-norm, hence also in $\bar{\mathcal{E}}_1$-norm. Therefore, we can conclude $u\in \bar{\mathcal{F}}$. Since $(\mathcal{E,F})$ is regular, it follows that $\mathcal{F}\subset \bar{\mathcal{F}}$, which indicates \textbf{(1)}. That completes the proof. 
\end{proof}

\section{Examples}\label{SEC5}

\subsection{One-dimensional examples}
 In this section, we shall give several one-dimensional examples. All of them satisfy the basic assumption \eqref{EQ2CCE}, i.e. $C_c^\infty\subset \FF$. Thus we may write their Fukushima's decompositions with respect to the coordinate function $f$ as \eqref{EQ1XXM}. Particularly, from Example~\ref{EXA51} to \ref{EXA54}, the regularity of $N^{[f]}$'s trajectories will become weaker and weaker. 
 
\begin{example}\label{EXA51}
In this example, we shall reintroduce S. Orey's work \cite{O74}, which illustrates perfect diffusion processes that are part of the particular cases of Theorem~\ref{THM21}, i.e. $C_c^\infty(I)\subset \FF$ and $\ss$ is absolutely continuous. Without loss of generality, let $I=\mathbb{R}$ and $\mathbf{X}=(X_t, \mathbf{P}^x)_{x\in \mathbb{R}}$ an irreducible diffusion process with no killing inside, whose scaling function and speed measure are denoted by $\ss$ and $m$. Naturally, $\tt:=\ss^{-1}$. Denote the associated Dirichlet form on $L^2(\mathbb{R},m)$ of $\mathbf{X}$ by $(\EE,\FF)$. 

By \cite[Theorem~1]{O74}, the absolute continuity of $\mathbf{X}$ with respect to a Brownian motion (see \S\ref{SEC1}) has several equivalent conditions, and we pick out two of them as follows:
\begin{description}
\item[(b1)] the scaling function 
\begin{equation}\label{EQ51SXX}
	\ss(x)= \int_0^x\exp\left\{-2\int_0^yb(z)dz\right\}dy,
\end{equation}
and the speed measure $m(dx)=m_b(x)dx$ with the density function 
\[
	m_b(x)= \int_0^x\exp\left\{2\int_0^yb(z)dz\right\}dy
\]
for some $b\in L^2_\mathrm{loc}(\mathbb{R})$;
\item[(b2)] for some $b\in L^2_\mathrm{loc}(\mathbb{R})$, 
\[
	Y_t[b]:=X_t-\int_0^t b(X_u)du,\quad t\geq 0
\]
is a $(\mathbf{P}^x)_{x\in \mathbb{R}}$-Brownian motion. 
\end{description}
Now we assert if so, $\mathbf{X}$ enjoys the following properties:
\begin{description}
\item[(t1)] $C_c^\infty(\mathbb{R})\subset \FF$; 
\item[(t2)] \textbf{(H4')} is satisfied, and $m=\tilde{m}$;
\item[(t3)] $\tt'$ is strictly positive, and $\tt''\in L^2_\mathrm{loc}(J)$. 
\end{description}
The strict positivity of $\tt'$ indicates the absolute continuity of $\ss$. Particularly, $\mathbf{X}$ satisfies all the assumptions of Corollary~\ref{COR215} besides \textbf{(H2)}, and $\mathfrak{D}$ has a unique element.

Let us give a brief proof to the above facts \textbf{(t1)},  \textbf{(t2)} and \textbf{(t3)}. From \eqref{EQ51SXX}, we know that $\ss$ is absolutely continuous and $\ss'$ is strictly positive. It follows that $\tt$ is also absolutely continuous. Note that $m_b(x)=\ss'(x)^{-1}=\tt'\circ \ss(x)$. Thus $m=\tilde{m}$. Clearly, $\tt'\in L^2_\mathrm{loc}(J)$ is equivalent to the fact $(\tt'\circ \ss)^2d\ss=(\tt'\circ \ss)(x)dx=m(dx)$ is Radon, which is obvious. Hence \textbf{(t1)} is verified by Lemma~\ref{LM21}. Then 
\[
	X_t-X_0=B_t+\int_0^t b(X_u)du,\quad t\geq 0,
\]
where $B_t:=Y_t[b]-Y_0[b]$ is a standard Brownian motion, is actually the Fukushima's decomposition of $\mathbf{X}$ with respect to the coordinate function $f$. It follows from Corollary~\ref{COR44} that \textbf{(H4')} is enjoyed and hence \textbf{(t2)} is also checked. Naturally, that $\tt'$ is strictly positive may be deduced from $\tt'\circ\ss(x)=m_b(x)$. Finally, let us prove $\tt''\in L^2_\mathrm{loc}(J)$. In fact, for any compact subset $K\subset \mathbf{R}$, set $K':=\tt(K)$ which is also compact. Then it follows from Corollary~\ref{COR215} that 
\begin{equation}\label{IKBD}
	4\int_{K'} b(x)^2dx=\int_{K'} \frac{(\tt'')^2}{(\tt')^4}(\ss(x))dx=\int_{K} \frac{(\tt'')^2}{(\tt')^3}dx.
\end{equation}
Since $\tt'$ is continuous and strictly positive, we may deduce that
\[
	4\int_{K'} b(x)^2dx\geq \frac{1}{||\tt'||^3_{\infty, K}}\int_{K} \tt''(x)^2dx.
\]
It follows from $b\in L^2_{\text{loc}}$ that $\tt''\in L^2_{\text{loc}}(J)$. That indicates \textbf{(t3)}. 

On the contrary, if $\mathbf{X}$ satisfies \textbf{(t1)}, \textbf{(t2)} and \textbf{(t3)}, we may also deduce that $\mathbf{X}$ is absolutely continuous with respect to a Brownian motion. Note that \textbf{(t1)} and \textbf{(t2)} implies $\mathbf{X}$ enjoys the Fukushima's decomposition in Corollary~\ref{COR215}, whereas $b$ is only in $L^1_\mathrm{loc}(\mathbb{R},m)$. The strictly positivity of $\tt'$ in \textbf{(t3)} indicates $\mathfrak{D}$ has only one element, and from $\tt''\in L^2_\mathrm{loc}(J)$ and \eqref{IKBD}, we may also obtain $b\in L^2_\mathrm{loc}(\mathbb{R})$. Thus from the equivalent condition \textbf{(b2)}, we know $\mathbf{X}$ is absolutely continuous with respect to a Brownian motion. 

Intuitively, \textbf{(t3)} is an equivalent gap from Corollary~\ref{COR215} to the fact $\mathbf{X}$ is absolutely continuous with respect to a Brownian motion that researched in S. Orey's work \cite{O74}. 
\end{example}

\begin{example}\label{EXA52}
Next, we shall give a class of examples that satisfies the assumptions \textbf{(H2)}, \textbf{(H3')} and \textbf{(H4')} in \S\ref{SEC24}. Since \textbf{(H3')} is not essential, namely, we may only raise a scaling function $\ss$ on $\mathbb{R}$ and its inverse function $\tt$ on $J=\ss(\mathbb{R})$ such that
\begin{description}
\item[(s1)] $\ss$ and $\tt$ are both strictly increasing and continuous; 
\item[(s2)] $\tt\in C^1(J)$ and $\tt'$ is absolutely continuous;
\item[(s3)] $\ss$ is not absolutely continuous. 
\end{description}
\end{example}

Take a generalized Cantor set (Cf. \cite{F99}) $K\subset [0,1]$ such that its Lebesgue measure $\lambda(K)>0$. Note that $K$ is nowhere dense and compact. Define a function $\psi$ on $\mathbb{R}$ by
\[
	\psi(x):=d(x,K)=\inf_{w\in K}|x-w|
\]
Clearly, $\psi$ is continuous, strictly positive on $\mathbb{R}\setminus K$, and the set of its all zero points 
\[
	Z_\psi :=\{x\in \mathbb{R}: \psi(x)=0\}
\]
exactly equals $K$. Moreover, one may easily check that for any $x,y\in \mathbb{R}$, 
\[
	|\psi(x)-\psi(y)|\leq |x-y|. 
\]
That indicates $\psi$ is Lipschitz continuous, hence also absolutely continuous. Now define
\[
	\tt(x):=\int_0^x \psi(u)du, \quad x\in \mathbb{R}. 
\]
Clearly, $\tt$ satisfies \textbf{(s2)}. Since $Z_\psi=K$ is a nowhere dense set, it follows that $\tt$ is strictly increasing. Thus $\tt$ and $\ss:=\tt^{-1}$ are both strictly increasing and continuous. Namely, \textbf{(s1)} also holds. Note that $\ss$ is absolutely continuous if and only all the zero points of $\tt'$ is of zero Lebesgue measure. However, $Z_{\tt'}=Z_\psi=K$ and $\lambda(K)>0$. That indicates $\ss$ is not absolutely continuous.  Hence, \textbf{(s3)} is also verified. 

In particular, in the above example, since $K$ is compact, one may easily compute $\tt(-\infty)=-\infty$ and $\tt(\infty)=\infty$. Thus $\ss$ and $\tt$ are both defined on $\mathbb{R}$. Consequently, the associated diffusion process is recurrent, hence also conservative. Furthermore, the absolutely continuous part $\bar{\ss}$ of $\ss$ also satisfies $\bar{\ss}(-\infty)=-\infty$ and $\bar{\ss}(\infty)=\infty$. So by \cite[Remark~3.5]{LY15-2}, every diffusion process associated with an element in $\mathfrak{D}$ is recurrent and we need not worry about its lifetime. In other words, all the solutions in Theorem~\ref{THM214} never explode. 

\begin{example}\label{EXA53}
 Now we shall introduce a class of examples such that $N^{[f]}$ is of bounded variation, but cannot be written as the form of $\int_0^\cdot b(X_s)ds$. In other words, the smooth signed measure $\mu_{N^{[f]}}$ is not absolutely continuous with respect to the speed measure $m$. 
 
Let $\mathbf{X}$ be a diffusion process on $I$ with the scaling function $\ss$, speed measure $m$ and no killing inside. A skew transform at a fixed point $x_0\in I$ with two parameters $\gamma_1,\gamma_2>0$ is an operation on $\mathbf{X}$ to produce another diffusion process $\hat{\mathbf{X}}$, whose scaling function $\hat{\ss}$ and speed measure $\hat{m}$ are given by 
		\[
		\begin{aligned}
			d\hat{\ss}:=\gamma_1 d\ss|_{\{x\in I: x< x_0\}}+\gamma_2 d\ss|_{\{x\in I: x\geq x_0\}},\\
			\hat{m}:=\frac{1}{\gamma_1}m|_{\{x\in I: x< x_0\}}+\frac{1}{\gamma_2}m|_{\{x\in I: x\geq x_0\}}.
		\end{aligned}\]
Let $\tt:=\ss^{-1}$. Set $\ss(x_0)=0$. Then the inverse function $\hat{\tt}$ of $\hat{\ss}$ may be written as
\begin{equation}\label{EQ51TYT}
			\hat{\tt}(y)=\tt\left(\frac{y}{\gamma_1} \right) 1_{\{y<0\}}+ \tt\left(\frac{y}{\gamma_2} \right) 1_{\{y\geq 0\}}, \quad y\in \mathbb{R}. 
\end{equation}
We assert the following basic assumptions remain under the skew transforms:
\begin{description}
\item[(c1)] $C_c^\infty(I)\subset \FF$;
\item[(c2)] $M^{[f]}$ is equivalent to a Brownian motion;
\item[(c3)] $N^{[f]}$ is of bounded variation. 
\end{description}
This fact is easy to check by the equivalent characterizations via $\ss, \tt$ and $m$, say Lemma~\ref{LM21}, Lemma~\ref{LM41} and Proposition~\ref{PRO42}. For example, if $\mathbf{X}$ satisfies \textbf{(c1)}, equivalently, $\tt$ is absolutely continuous and $\tt'\in L^2_\mathrm{loc}$, then it follows from \eqref{EQ51TYT} that $\hat{\tt}$ is also absolutely continuous and $\hat{\tt}'\in L^2_\mathrm{loc}$. Thus $\hat{X}$ also satisfies \textbf{(c1)}. The other two can also be verified through the similar way. 

Particularly, let $\mathbf{X}$ be the Brownian motion on $\mathbb{R}$, and $\gamma_1:=1/\alpha$, $\gamma_2:=1/(1-\alpha)$ for some constant $\alpha$ such that $0<\alpha<1$ and $\alpha\neq 1/2$ ($\alpha=1/2$ corresponds to the Brownian motion). Then the associated diffusion process $\hat{\mathbf{X}}$ after the skew transform is called the $\alpha$-skew Brownian motion. This conception was first raised in \cite{IM74} as a natural generalization of Brownian motion, which behaves like a Brownian motion except that the sign of each excursion at $x_0$ is determined by another independent Bernoulli random variable with parameter $\alpha$. We also refer its complete description to \cite{H81}.

Since Brownian motion clearly satisfies all the three assumptions above, it follows that so does the $\alpha$-skew Brownian motion. Thus  the Fukushima's decomposition of $\alpha$-skew Brownian motion $\hat{\mathbf{X}}$ may be written as
\[
\hat{X}_t-\hat{X}_0=B_t+\hat{N}^{[f]}_t,\quad t\geq 0,
\]
where $B=(B_t)_{t\geq 0}$ is a standard Brownian motion, and $\hat{N}^{[f]}$ is of bounded variation. Since $\alpha\neq 1/2$, we know $\hat{\mathbf{X}}$ is not a Brownian motion, and $\hat{N}^{[f]}$ never disappears. On the other hand, the smooth signed measure $\mu_{\hat{N}^{[f]}}$ is not absolutely continuous with respect to $m$. In fact, from $\tt(x)=x$ and \eqref{EQ51TYT}, we may deduce that
\begin{equation}\label{EQ51TYA}
\hat{\tt}'(y)=\alpha \cdot 1_{\{y<0\}} +(1-\alpha)\cdot 1_{\{y\geq 0\}}, \quad y\in \mathbb{R}. 
\end{equation}
Clearly, $\hat{\tt}'$ is only of bounded variation, but not absolutely continuous. Then Corollary~\ref{COR44} implies $\mu_{\hat{N}^{[f]}}$ is not absolutely continuous with respect to $m$.

It is well known that $\hat{N}^{[f]}$ equals 
\[
	\hat{N}^{[f]}_t=\frac{1}{2}(1-2\alpha)\cdot L^{x_0}_t(\hat{\mathbf{X}}), \quad t\geq 0,
\]
where $\left(L^{x_0}_t(\hat{\mathbf{X}})\right)_{t\geq 0}$ is the local time of $\hat{\mathbf{X}}$ at $x_0$, or equivalently, the PCAF associated with $\delta_{x_0}$. This fact is also obvious by Proposition~\ref{PRO42} and \eqref{EQ51TYA}. 
\end{example}

\begin{example}\label{EXA54}
Finally, we shall give an example such that $N^{[f]}$ is not of bounded variation. Without loss of generality, let $I=\mathbb{R}$. Take the standard Cantor function (Cf. \cite{F99}) $c$ and treat it as a function on $\mathbb{R}$ by defining $c(x)=0$ if $x<0$ and $c(x)=1$ if $x>1$. Let
\[
	\ss(x):=x+c(x),\quad x\in \mathbb{R}. 
\]
Further let the speed measure $m$ be the Lebesgue measure. The associated diffusion process and Dirichlet form of $\ss$ and $m$ are still denoted by $\mathbf{X}$ and $(\EE,\FF)$. Note that the associated Dirichlet form $(1/2\cdot\mathbf{D}, H^1(\mathbb{R}))$ of the Brownian motion is exactly a proper regular Dirichlet subspace of $(\EE,\FF)$, since $dx/d\ss=0$ or $1$, $d\ss$-a.e.  

We assert $\mathbf{X}$ satisfies the assumptions \textbf{(c1)} and \textbf{(c2)} in Example~\ref{EXA53}, but $N^{[f]}$ is not of bounded variation. In fact, one may easily find $\tt:=\ss^{-1}$ is absolutely continuous, and 
\[
	\tt'=0\text{ or }1,\quad \text{a.e.}
\]
We also refer this fact to \cite{FFY05}. 
Moreover, the Lebesgue measure of the set of all the zero points of $\tt'$, i.e. 
\[
	Z_{\tt'}:=\{x\in \mathbb{R}: \tt'(x)=0\}
\]
is positive. Clearly, $\tt'\in L^2_\mathrm{loc}$. Thus Lemma~\ref{LM21} implies \textbf{(c1)}. For \textbf{(c2)}, it suffices to prove $\tt'\circ \ss=1$, a.e. In fact, ($m$ is the Lebesgue measure)
\[
m\left(\{x\in \mathbb{R}: \tt'\circ \ss(x)=0\}\right) =d\tt(Z_{\tt'})=\int_{Z_{\tt'}} \tt'(y)dy=0. 
\]
Finally, no a.e. version of $\tt'$ is of bounded variation. Indeed, it follows from $\tt$ is strictly increasing that $Z_{\tt'}^c$ is measure-dense in $\mathbb{R}$. More precisely, for any open interval $(a,b)$,
\[
	m\left((a,b)\bigcap Z_{\tt'}^c \right)=d\tt\left((a,b)\right)=\tt(b)-\tt(a)>0. 
\]
Particularly, for any $x\in Z_{\tt'}$, any neighbourhood of $x$ contains some points in $Z_{\tt'}^c$.  Thus $\tt'$ is not continuous at any point in $Z_{\tt'}$, whereas $m(Z_{\tt'})>0$. That indicates no a.e. version of $\tt'$ is of bounded variation, since the discontinuous points of a function of bounded variation must be countable. Therefore, from Proposition~\ref{PRO42}, we may obtain that $N^{[f]}$ is not of bounded variation. 

Note that in this example, $\ss$ is not absolutely continuous, thus the set $\mathfrak{D}$ of Dirichlet forms in \S\ref{SEC242} has uncountable elements. Particularly, the associated Dirichlet form $(1/2\cdot\mathbf{D}, H^1(\mathbb{R}))$ of the Brownian motion is the smallest one in it.  Except for the Brownian motion, any other element in $\mathfrak{D}$ still satisfies \textbf{(c1)} and \textbf{(c2)} in Example~\ref{EXA53}, but the zero energy part of its Fukushima's decomposition is not of bounded variation either. 
\end{example}

\subsection{Multi-dimensional examples}

In this section, we shall give two examples. The first one enjoys all the assumptions of Theorem~\ref{THM29}, whereas $C_c^\infty(U)$ is not dense in $\FF$ with the norm $\|\cdot\|_{\EE_1}$. Equivalently, $N^{[f]}$ is not of bounded variation, or $C_c^\infty(U)\not\subset \mathcal{D}(\mathcal{L})$. The second one raises a Dirichlet form $(\EE,\FF)$ on $L^2(\mathbb{R}^d,m)$, where $m$ possesses a strictly positive $C^\infty$-density function $\rho$ on $\mathbb{R}^d\setminus \{0\}$, but $\rho$ explodes at $\{0\}$. The smooth function class $C_c^\infty(\mathbb{R}^d)$ is a special standard core of $(\EE,\FF)$, whereas $N^{[f]}$ is not of bounded variation. 

\begin{example}\label{EXA55}
Let $\mathbf{X}^1,\cdots, \mathbf{X}^d$ be the independent copies of diffusion process that given in Example~\ref{EXA54}, and $\mathbf{X}:=(\mathbf{X}^1,\cdots, \mathbf{X}^d)$ the Cartesian product of them. Clearly, $\mathbf{X}$ is $\lambda_d$-symmetric, where $\lambda_d$ is the Lebesgue measure on $\mathbb{R}^d$. Denote its Dirichlet form on $L^2(\mathbb{R}^d)$ by $(\EE,\FF)$. From Lemma~\ref{LM24} and Theorem~\ref{THM25}, we may easily deduce that  $C_c^\infty(\mathbb{R}^d)\subset \FF$, but it is not dense in $\FF$ with the norm $\|\cdot \|_{\EE_1}$, since $\ss$ given in Example~\ref{EXA54} is not absolutely continuous. On the other hand, $(1/2\cdot \mathbf{D}, H^1(\mathbf{R}^d))$ is naturally a regular Dirichlet subspace of $(\EE,\FF)$. Thus for any $u,v\in C_c^\infty(\mathbb{R}^d)$, 
\[
	\EE(u,v)=\frac{1}{2}\mathbf{D}(u,v)=\frac{1}{2}\int_{\mathbb{R}^d} \nabla u(x)\cdot \nabla v(x) dx. 
\] 
That indicates $(\EE,\FF)$ enjoys all the assumptions of Theorem~\ref{THM29}, but $C_c^\infty(\mathbb{R}^d)$ is not its special standard core. 

For any domain $U$ of $\mathbb{R}^d$, without loss of generality, we may assume a rectangle $I_1\times \cdots \times I_d\subset U$, where $I_1,\cdots, I_d$ are open intervals, and $I_i\cap (0,1)\neq \emptyset$ for some $i$. Then the part Dirichlet form $(\EE^U,\FF^U)$ of $(\EE,\FF)$ on $U$ is still a regular Dirichlet form on $L^2(U)$, and the associated Dirichlet form $(1/2\mathbf{D}_U, H^1_0(U))$ of absorbing Brownian motion on $U$ is a proper regular Dirichlet subspace of $(\EE^U,\FF^U)$. We refer some similar discussions to \cite[\S3.4]{LY15}. Particularly, $(\EE^U,\FF^U)$ also satisfies all the assumptions of Theorem~\ref{THM29}, but $C_c^\infty(U)$ is not its special standard core.  

\end{example}

\begin{example}\label{EXA56}
Let $U=\mathbb{R}^3$, and $m(dx)=\rho(x)dx$ with
\[
	\rho(x):=\frac{\gamma}{2\pi}\frac{\mathrm{e}^{-2\gamma |x|}}{|x|^2}
\]
for a fixed constant $\gamma>0$. Note that $m(\mathbb{R}^3)=1$. Define
	\[
		\begin{aligned}
			&\mathcal{F}=\left\{u\in L^2(\mathbb{R}^3,\rho(x)dx):\nabla u\in L^2(\mathbb{R}^3,\rho(x)dx)\right\}	\\
			&\mathcal{E}(u,v)=\frac{1}{2}\int_{\mathbb{R}^3}\nabla u(x)\cdot\nabla v(x)\rho(x)dx,\quad u,v\in \mathcal{F}. 
		\end{aligned}
	\]
It follows that $(\EE,\FF)$ is a Dirichlet form on $L^2(\mathbb{R}^3,m)$, and $C_c^\infty(\mathbb{R}^3)\subset \FF$. Furthermore, its associated diffusion process $\mathbf{X}$ satisfies the following Fukushima's decomposition
\begin{equation}\label{EQ52XTX}
	X_t-X_0=B_t+N^{[f]}_t,\quad t\geq 0
\end{equation}
where $(B_t)_{t\geq 0}$ is a standard Brownian motion. It is worth asserting the following facts:
\begin{description}
\item[(d1)] $C_c^\infty(\mathbb{R}^3)$ is a special standard core of $(\EE,\FF)$;
\item[(d2)] $N^{[f]}$ is not of bounded variation. 
\end{description}

This example is taken from some studies of mathematical physics, such as \cite{AGHH05, AH82, AHS77}. Particularly, the energy form $(\EE,\FF)$ corresponds to a typical solvable model in quantum mechanics. However, we do not find any references to prove the two properties \textbf{(d1)} and \textbf{(d2)} of $(\EE,\FF)$. So we put their proofs into the appendix. 
\end{example}

\section*{Acknowledgement}
The author in the second order would like to thank Professor Zhi-Ming Ma and Professor Jiangang Ying for many helpful suggestions. He also wants to thank Dr. Hui Xiao for letting him know the reference \cite{IW78}.

\begin{appendix}
\section{Proofs of Example~\ref{EXA56}}

We work on the Hilbert space $L^2(\mathbb{R}^3)$ and start with the unbounded operator 
\begin{equation}\label{LFD}
	L:=\frac{1}{2}\Delta=\frac{1}{2}\sum_{i=1}^3 \frac{\partial^2}{\partial x_i^2}
\end{equation}
with the domain
\begin{equation}\label{LFD2}
	\mathcal{D}(L)=C_c^\infty(\mathbb{R}^3\setminus \{0\}).
\end{equation}
Note that $L$ is symmetric but not self-adjoint on $L^2(\mathbb{R}^3)$. 
The following lemma is taken from \cite{L11} that completely characterizes all the self-adjoint extensions of $L$. We also refer some similar considerations to \cite{AGHH05} and \cite{CKMV10}. 

\begin{lemma}\label{LM1}
All the self-adjoint extensions of $L$ to an operator acting on $L^2(\mathbb{R}^3)$ form a one-parameter family $\{\mathcal{L}_\gamma,\gamma\in \mathbb{R}\}$. The spectrum of $\mathcal{L}_\gamma$ is given by
\[
	\begin{aligned}
		\text{spec}(\mathcal{L}_\gamma)&=(-\infty,0]\bigcup \left\{\frac{\gamma^2}{2}\right\},\quad \gamma>0,\\
		\text{spec}(\mathcal{L}_\gamma)&= (-\infty, 0],\quad \gamma\leq 0.
	\end{aligned}
\]
Furthermore, if $\gamma>0$, then $\gamma^2/2$ is a simple eigenvalue of $\mathcal{L}_\gamma$ with the eigenfunction 
\begin{equation}\label{PGF}
	\psi_\gamma(x)=\frac{\sqrt{\gamma}}{\sqrt{2\pi}}\frac{\text{e}^{-\gamma |x|}}{|x|}.
\end{equation}
\end{lemma}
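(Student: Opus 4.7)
The plan is to apply von Neumann's theory of self-adjoint extensions to $L$. First I would identify the adjoint: its domain $\mathcal{D}(L^*)$ consists of all $u\in L^2(\mathbb{R}^3)$ whose distributional Laplacian on $\mathbb{R}^3\setminus\{0\}$ belongs to $L^2(\mathbb{R}^3)$, and $L^*u=\tfrac12\Delta u$ in this distributional sense. I would then compute the deficiency subspaces $\mathcal{N}_\pm:=\ker(L^*\mp i)$ by decomposing $L^2(\mathbb{R}^3)$ into angular-momentum sectors via spherical harmonics $\{Y_\ell^m\}$. In each sector with $\ell\geq 1$, the centrifugal barrier $\ell(\ell+1)/r^2$ forces every $L^2$-solution of the corresponding radial equation to vanish sufficiently at $r=0$ to be already approximable in the graph norm by $C_c^\infty((0,\infty))$-functions; hence no deficiency arises in those sectors. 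In the radial sector $\ell=0$, writing $\phi(x)=u(|x|)/|x|$ converts $\tfrac12\Delta\phi=\pm i\phi$ into $u''=\pm 2iu$ on $(0,\infty)$, which has exactly one $L^2$-solution $u(r)=e^{-kr}$ (with $\mathrm{Re}\,k>0$) up to a scalar. Hence $\dim\mathcal{N}_+=\dim\mathcal{N}_-=1$, and the deficiency indices are $(1,1)$.

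By von Neumann's theorem the self-adjoint extensions of $\overline{L}$ are in bijection with $U(1)$, yielding a one-parameter family. I would relabel this circle by a real parameter $\gamma\in\mathbb{R}$ (together with a limiting Friedrichs case) in terms of the boundary condition at the origin: a function $\psi\in\mathcal{D}(L^*)$ admits a short-range expansion $\psi(x)=A/|x|+B+o(1)$ as $x\to 0$, and lies in $\mathcal{D}(\mathcal{L}_\gamma)$ precisely when $B=-\gamma A$. Krein's resolvent formula then represents $(\mathcal{L}_\gamma-z)^{-1}-(\mathcal{L}_\infty-z)^{-1}$ as a rank-one operator, where $\mathcal{L}_\infty$ denotes the Friedrichs extension $\tfrac12\Delta$ on $H^2(\mathbb{R}^3)$; Weyl's essential-spectrum theorem then gives $\sigma_{\mathrm{ess}}(\mathcal{L}_\gamma)=\sigma_{\mathrm{ess}}(\mathcal{L}_\infty)=(-\infty,0]$ for every $\gamma$.

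To locate isolated eigenvalues, any eigenfunction with eigenvalue $\mu^2/2>0$ must solve $\Delta\psi=\mu^2\psi$ on $\mathbb{R}^3\setminus\{0\}$; the only radial $L^2$-solution is $\psi(x)=c\,|x|^{-1}e^{-\mu|x|}$, whose expansion at zero is $c/|x|-c\mu+o(1)$. Plugging this into $B=-\gamma A$ yields $-c\mu=-\gamma c$, i.e. $\mu=\gamma$; this is feasible only when $\gamma>0$ and produces the unique eigenvalue $\gamma^2/2$. A direct integration gives $\|\psi_\gamma\|_{L^2}^2=4\pi c^2\int_0^\infty e^{-2\gamma r}\,dr=2\pi c^2/\gamma$, so $c=\sqrt{\gamma/(2\pi)}$ recovers \eqref{PGF}, and simplicity of the eigenvalue follows from the one-dimensional nature of the radial sector. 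The main obstacle is executing the bookkeeping of the parametrization step: translating the abstract $U(1)$-labeling coming from von Neumann's theorem into the concrete boundary-condition parametrization $B=-\gamma A$, since this is precisely what reveals the dichotomy between the bound-state regime ($\gamma>0$) and the bound-state-free regime ($\gamma\leq 0$).
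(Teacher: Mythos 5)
Your proposal is essentially correct, but note that the paper never proves Lemma~\ref{LM1}: it is imported verbatim from Lin \cite{L11}, with parallel treatments in \cite{AGHH05} and \cite{CKMV10}, so there is no in-house argument to compare against. Your sketch is in fact the standard proof from those sources: the spherical-harmonic decomposition showing the $\ell\geq 1$ sectors are in the limit-point case (hence essentially self-adjoint), the deficiency indices $(1,1)$ coming from the unique $L^2$-solution $e^{-kr}$, $\mathrm{Re}\,k>0$, of $u''=\pm 2iu$ in the $s$-wave sector, the von Neumann $U(1)$ family realized through the boundary data $\psi(x)=A/|x|+B+o(1)$, Krein's formula plus Weyl's theorem for $\sigma_{\mathrm{ess}}=(-\infty,0]$, and the explicit bound state $c\,e^{-\gamma|x|}/|x|$ with the correct normalization $c=\sqrt{\gamma/(2\pi)}$, which is exactly \eqref{PGF}. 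The main difference in presentation is that \cite{AGHH05} parametrizes the family by writing down the resolvents explicitly (Krein's formula first, boundary conditions read off afterwards), whereas you go through the boundary-condition parametrization first; the two routes are equivalent, and yours is arguably the more transparent way to see the dichotomy $\gamma>0$ versus $\gamma\leq 0$.

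Two points deserve tightening. First, the step you yourself flag — verifying that $B=-\gamma A$, $\gamma\in\mathbb{R}$, together with the Friedrichs condition $A=0$, exhausts the von Neumann circle — requires the boundary-form computation $\langle L^*u,v\rangle-\langle u,L^*v\rangle=2\pi\left(A_u\overline{B_v}-B_u\overline{A_v}\right)$ (up to normalization), and it exposes a slight looseness in the lemma as stated: the set of \emph{all} self-adjoint extensions is a circle $\mathbb{R}\cup\{\infty\}$, with the free operator $\tfrac12\Delta$ on $H^2(\mathbb{R}^3)$ sitting at $\gamma=\infty$; this is why \cite{L11} speaks of \emph{non-Friedrichs} extensions, and your parenthetical ``limiting Friedrichs case'' is needed for the statement to be literally exhaustive. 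Second, in the eigenvalue step you should say explicitly why an eigenfunction for a positive eigenvalue is radial: in each sector $\ell\geq 1$ every self-adjoint extension acts as the (essentially self-adjoint) free reduced operator, whose spectrum is contained in $(-\infty,0]$, so no positive eigenvalue can have a nonradial component — this follows from the same limit-point analysis you already used for the deficiency indices, but it is the ingredient that turns ``the only radial $L^2$-solution'' into ``the only $L^2$-eigenfunction'' and hence gives simplicity.
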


Note that for a fixed constant $\gamma>0$, the density function $\rho$ in Example is the square of $\psi_\gamma$ given by \eqref{PGF}, i.e. $\rho=\psi_\gamma^2$. The following theorem concludes the fact \textbf{(d1)} in Example~\ref{EXA56}. 

\begin{theorem}
Let $(\EE,\FF)$ be defined  in Example~\ref{EXA56}. Then $(\EE,\FF)$ is a regular Dirichlet form on $L^2(\mathbb{R}^3,m)$ with a special standard core $C_c^\infty(\mathbb{R}^3)$. 
\end{theorem}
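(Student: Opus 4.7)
I will verify the three defining properties of a regular Dirichlet form with $C_c^\infty(\mathbb{R}^3)$ as a special standard core. Symmetry is immediate, and the Markovian property is routine for energy forms: for a unit Lipschitz contraction $\phi$ with $\phi(0)=0$, the chain rule yields $|\nabla(\phi\circ u)|\leq|\nabla u|$ a.e.\ and $|\phi\circ u|\leq|u|$, so $\phi\circ u\in\FF$ with $\EE(\phi\circ u)\leq\EE(u)$. For closedness, given an $\EE_1$-Cauchy sequence $(u_n)\subset\FF$, one obtains $u_n\to u$ in $L^2(m)$ and $\nabla u_n\to v$ in $L^2(m;\mathbb{R}^3)$; since $\rho$ is bounded below on compact subsets of $\mathbb{R}^3\setminus\{0\}$, both convergences hold in $L^2_{\mathrm{loc}}(\mathbb{R}^3\setminus\{0\})$, whence $v=\nabla u$ distributionally on $\mathbb{R}^3\setminus\{0\}$, and therefore a.e., so $u\in\FF$. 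The inclusion $C_c^\infty(\mathbb{R}^3)\subset\FF$ is noted and $C_c^\infty(\mathbb{R}^3)$ is uniformly dense in $C_c(\mathbb{R}^3)$; the substantive point is $\EE_1$-density of $C_c^\infty(\mathbb{R}^3)$ in $\FF$.

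The plan for the density is the standard truncate-and-mollify scheme, with care needed only at the singularity of $\rho$ at $0$. Fixing $\chi_R\in C_c^\infty(\mathbb{R}^3)$ with $\chi_R\equiv 1$ on $B_R$, $\chi_R\equiv 0$ off $B_{2R}$, and $|\nabla\chi_R|\leq C/R$, I would check that $u\chi_R\to u$ in $\EE_1$: the $L^2(m)$-convergence is dominated convergence, and the energy term splits as $\int(1-\chi_R)^2|\nabla u|^2\rho\,dx+\int u^2|\nabla\chi_R|^2\rho\,dx$, the first tending to zero by dominated convergence and the second controlled by $(C^2/R^2)\|u\|_m^2$. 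This reduces the problem to approximating a $u\in\FF$ with $\mathrm{supp}\,u\subset B_R$ by $C_c^\infty(\mathbb{R}^3)$ in $\EE_1$.

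For such a compactly supported $u$, I would take standard mollifications $u_\delta=u\ast\phi_\delta$. The crucial observation is that on any fixed ball $B_{R+1}$, the density $\rho(x)=(\gamma/2\pi)e^{-2\gamma|x|}/|x|^2$ is comparable (with bounded positive constants) to the Muckenhoupt weight $|x|^{-2}$, which lies in $A_2(\mathbb{R}^3)$ because $|x|^\alpha\in A_2(\mathbb{R}^3)$ exactly when $-3<\alpha<3$. Hence the Hardy--Littlewood maximal function is bounded on the weighted $L^2$-space under consideration, and the pointwise majorization $|u\ast\phi_\delta|\leq C\,Mu$ together with dominated convergence delivers $u_\delta\to u$ and $\nabla u_\delta=(\nabla u)\ast\phi_\delta\to\nabla u$ in $L^2(\rho\,dx)$. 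Combined with the truncation step, this yields $\EE_1$-density of $C_c^\infty(\mathbb{R}^3)$ in $\FF$ and completes the proof.

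The main obstacle is justifying the mollification argument near the singular point $\{0\}$; the Yukawa decay $e^{-2\gamma|x|}$ breaks global $A_2$ for $\rho$ at infinity, but only the local $A_2$ behavior near $0$ is needed because our functions are compactly supported, and the smoothness and positivity of the Yukawa factor on any fixed ball reduces matters to classical weighted Sobolev theory. (An alternative closer to the Doob-transform perspective, using Lemma~A.1, would identify $(\EE,\FF)$ as the unitary image of the form associated with $\mathcal{L}_\gamma-\gamma^2/2$ under $f\mapsto f/\psi_\gamma$ and deduce the core property from a density statement in the form domain of $\mathcal{L}_\gamma$, but the direct truncate-mollify route above is more elementary.)
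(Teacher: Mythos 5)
Your route is genuinely different from the paper's, and for the most part it works. The paper does not approximate at all: it takes the smallest closed extension $(\bar{\EE},\bar{\FF})$ of $(\EE,C_c^\infty(\mathbb{R}^3))$, conjugates the generators $A$ and $\bar{A}$ of the two forms by the ground state (setting $Hu=\psi_\gamma\cdot A(\psi_\gamma^{-1}u)$, where $\rho=\psi_\gamma^2$), checks that both conjugated operators are self-adjoint extensions of $\frac{1}{2}\Delta$ on $C_c^\infty(\mathbb{R}^3\setminus\{0\})$ admitting $\gamma^2/2$ as an eigenvalue with eigenfunction $\psi_\gamma$, and then invokes the one-parameter classification of Lemma~\ref{LM1} to force both to equal $\mathcal{L}_\gamma$, whence $A=\bar{A}$ and $\FF=\bar{\FF}$. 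So the alternative you sketch in your final parenthesis is in fact essentially the paper's actual proof. Your main argument---truncation at infinity, then mollification, with convergence in $L^2(\rho\,dx)$ supplied by the local comparability $\rho\asymp |x|^{-2}$ on balls and the fact that $|x|^{-2}\in A_2(\mathbb{R}^3)$---is more elementary and self-contained (it avoids the extension-classification input of Lemma~\ref{LM1}), whereas the paper's operator-theoretic route buys the explicit identification of the generator with $\mathcal{L}_\gamma$, which is what connects Example~\ref{EXA56} to the solvable quantum-mechanical model. The truncation step and the $A_2$/maximal-function machinery ($\sup_\delta|u\ast\phi_\delta|\leq C\,Mu$, boundedness of $M$ on $L^2(w)$, dominated convergence) are all correct as you state them.

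There is, however, one step you assert without justification, and it is the real crux of your approach: the identity $\nabla(u\ast\phi_\delta)=(\nabla u)\ast\phi_\delta$. This requires that the distributional gradient of $u\in\FF$ on all of $\mathbb{R}^3$---not merely on $\mathbb{R}^3\setminus\{0\}$---be the function $\nabla u$, i.e.\ that no singular part sits at the origin. This is not automatic: $\mathrm{Cap}(\{0\})>0$ for this form (Lemma~\ref{LMA3}), and in the one-dimensional analogue with the $A_2$ weight $w(x)=|x|^{1/2}$ the corresponding statement is false: a localized version of $u=\mathrm{sign}(x)$ lies in the maximal space with a.e.\ derivative $g$ vanishing near $0$, yet $(u\ast\phi_\delta)'=u'\ast\phi_\delta=2\phi_\delta+g\ast\phi_\delta$ picks up the Dirac mass and does not converge to $g$; run verbatim, your scheme would there ``prove'' a density statement that is false. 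In the present setting the step is true and easy to supply: for $\varphi\in C_c^\infty(\mathbb{R}^3)$ and cutoffs $\eta_\epsilon$ with $\eta_\epsilon=0$ on $B_\epsilon$, $\eta_\epsilon=1$ off $B_{2\epsilon}$ and $|\nabla\eta_\epsilon|\leq C/\epsilon$, the error term in the integration by parts is bounded by
\[
	\frac{C}{\epsilon}\int_{B_{2\epsilon}}|u|\,dx\leq \frac{C}{\epsilon}\left(\int_{B_{2\epsilon}}\frac{u^2}{|x|^{2}}\,dx\right)^{1/2}\left(\int_{B_{2\epsilon}}|x|^{2}\,dx\right)^{1/2}=o\left(\epsilon^{3/2}\right)\rightarrow 0,
\]
so $\{0\}$ is removable for distributional gradients of functions in $\FF$; the codimension-three geometry, not the $A_2$ property, is what saves you. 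With this removability lemma inserted (it also tidies the ``and therefore a.e.''\ in your closedness check), your proof is complete.
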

\begin{proof}
Since $\psi_\gamma$ is smooth and strictly positive on $\mathbb{R}^3\setminus \{0\}$, it follows from \cite[\S3.3]{FOT11} that $(\mathcal{E,F})$ is a Dirichlet form (may be not regular) on $L^2(\mathbb{R}^3\setminus\{0\},m^0)$, where $m^0=m|_{\mathbb{R}^3\setminus \{0\}}$. Hence it is also a Dirichlet form on $L^2(\mathbb{R}^3,m)$. 
On the other hand, we clearly have
\[
	C_c^\infty(\mathbb{R}^3)\subset \mathcal{F}.
\]
Denote the smallest closed extension of $(\mathcal{E}, C_c^\infty(\mathbb{R}^3))$ in $(\mathcal{E,F})$ by $(\bar{\mathcal{E}},\bar{\mathcal{F}})$. Then $(\bar{\mathcal{E}},\bar{\mathcal{F}})$ is a regular Dirichlet form on $L^2(\mathbb{R}^3,m)$. Let $A$ and $\bar{A}$ be the associated generators of the Dirichlet forms $(\mathcal{E,F})$ and $(\bar{\mathcal{E}},\bar{\mathcal{F}})$ respectively. It suffices to prove $A=\bar{A}$.

We first assert
\begin{equation}\label{CCIM}
	C_c^\infty(\mathbb{R}^3\setminus\{0\})\subset \mathcal{D}(A)
\end{equation}
and 
\begin{equation}\label{AUFD}
Au=\frac{1}{2}\Delta u+\frac{\nabla \psi_\gamma}{\psi_\gamma}\cdot \nabla u
\end{equation}
for any $u\in C_c^\infty(\mathbb{R}^3\setminus\{0\})$. In fact, fix a function $u\in C_c^\infty(\mathbb{R}^3\setminus\{0\})$, we can easily deduce that $u\in \mathcal{F}$ and for any $f\in \mathcal{F}$,
\[
\begin{aligned}
	\mathcal{E}(u,f)&=\frac{1}{2}\sum_{i=1}^3\int\frac{\partial u}{\partial x_i}(x)\frac{\partial f}{\partial x_i}(x)\psi_\gamma(x)^2dx \\
			&=-\frac{1}{2}\sum_{i=1}^3\int f(x) \frac{\partial}{\partial x_i}\left(\frac{\partial u}{\partial x_i}\psi_\gamma^2\right)(x)dx \\
			&=-\frac{1}{2}\sum_{i=1}^3\int f(x)\left(\frac{\partial^2 u}{\partial x_i^2}(x)+2 \frac{\partial u}{\partial x_i}(x)\frac{1}{\psi_\gamma(x)}\frac{\partial \psi_\gamma}{\partial x_i}(x)\right)\psi_\gamma^2(x)dx\\
			&=\left(-\frac{1}{2}\Delta u-\frac{\nabla \psi_\gamma}{\psi_\gamma}\cdot \nabla u, f\right)_m.
\end{aligned}
\]
Thus $u\in \mathcal{D}(A)$ and \eqref{AUFD} holds. Through the same method, we can also obtain
\begin{equation}\label{IMDA}
	1\in \mathcal{D}(A),\quad A1=0.
\end{equation}

Next, define an operator $H$ on $L^2(\mathbb{R}^3)$ by 
\begin{equation}\label{MDHU}
\begin{aligned}
	&\mathcal{D}(H)=\{u\in L^2(\mathbb{R}^3): \psi_\gamma^{-1}\cdot u\in \mathcal{D}(A)\},\\
	&Hu=\psi_\gamma \cdot A(\psi_\gamma^{-1}\cdot u),\quad u\in \mathcal{D}(H).
\end{aligned}
\end{equation}
Since $A$ is self-adjoint on $L^2(\mathbb{R}^3,m)$, it follows that $H$ is a self-adjoint operator on $L^2(\mathbb{R}^3)$. Then from \eqref{CCIM} and \eqref{MDHU}, we may deduce that
\[
C_c^\infty(\mathbb{R}^3\setminus\{0\})\subset \mathcal{D}(H),
\]
and for any $u\in C_c^\infty(\mathbb{R}^3\setminus\{0\})$, 
\[
	Hu=\psi_\gamma \cdot A\left(\psi_\gamma^{-1}\cdot u\right)=\frac{1}{2}\Delta u-\frac{\gamma^2}{2}u.
\]
Let $H_\gamma:=H+\gamma^2/2$. Then $H_\gamma$ is a self-adjoint extension of $L$ defined by \eqref{LFD} and \eqref{LFD2}. Furthermore, \eqref{IMDA} implies
\[
	H_\gamma \psi_\gamma=H\psi_\gamma +\frac{\gamma^2}{2}\psi_\gamma=\frac{\gamma^2}{2}\psi_\gamma.
\]
Hence we can obtain $H_\gamma=\mathcal{L}_\gamma$  from Lemma~\ref{LM1}.

Through the similar way, one may easily find that \eqref{CCIM} and  \eqref{IMDA} are also right for the operator $\bar{A}$. Then we may define another operator $\bar{H}$ relative to $\bar{A}$ on $L^2(\mathbb{R}^3)$ via the same way as \eqref{MDHU}. We may also conclude that $\bar{H}_\gamma:=\bar{H}+\gamma^2/2$ is a self-adjoint extension of $L$ and
\[
	\bar{H}_\gamma=\mathcal{L}_\gamma=H_\gamma. 
\]
Particularly, $A=\bar{A}$. That completes the proof. 
\end{proof}

The following lemma asserts that $\{0\}$ is not a polar set with respect to $(\EE,\FF)$. This fact is very useful to prove Theorem~\ref{THMA4}.

\begin{lemma}\label{LMA3}
The $1$-capacity of $\{0\}$ with respect to $(\EE,\FF)$ is positive, i.e. $\text{Cap}(\{0\})>0$. 
\end{lemma}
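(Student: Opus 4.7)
The plan is a proof by contradiction combining rotational symmetrization with a one-dimensional weighted trace inequality. Suppose $\mathrm{Cap}(\{0\}) = 0$; then there is a sequence $\{u_n\}\subset\FF$ with $u_n\ge 1$ $m$-a.e.\ on an open neighborhood of $0$ and $\EE_1(u_n,u_n)\to 0$. My goal is to produce a uniform lower bound $\EE_1(u_n,u_n)\ge c>0$, contradicting this.

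Since $\rho$ depends only on $|x|$, both $\EE$ and $m$ are $SO(3)$-invariant. I would first replace $u_n$ by its Haar average $\bar u_n(x):=\int_{SO(3)} u_n(gx)\,dg$; a Jensen argument shows $\EE_1(\bar u_n,\bar u_n)\le \EE_1(u_n,u_n)$, and $\bar u_n\ge 1$ still holds $m$-a.e.\ on some ball $B_{r_n}(0)$. Writing $\bar u_n(x)=f_n(|x|)$ and integrating over spheres yields the radial formulas
\[
\EE(\bar u_n,\bar u_n)=\gamma\int_0^\infty f_n'(r)^2 e^{-2\gamma r}\,dr,\qquad \|\bar u_n\|_m^2=\int_0^\infty f_n(r)^2\cdot 2\gamma e^{-2\gamma r}\,dr,
\]
so the question reduces to a weighted $H^1$ problem on the half-line.

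The key step is a trace inequality at $r=0$ obtained by weighted Cauchy--Schwarz: writing $f_n'(s)=(f_n'(s)e^{-\gamma s})\cdot e^{\gamma s}$ and integrating on $[0,r]$ gives
\[
|f_n(0)-f_n(r)|^2\le \frac{e^{2\gamma r}-1}{2\gamma^2}\EE(\bar u_n,\bar u_n).
\]
Applying $f_n(0)^2\le 2f_n(r)^2+2|f_n(0)-f_n(r)|^2$ and averaging $r$ over $[0,1]$ against the weight $2\gamma e^{-2\gamma r}\,dr$ (which has positive total mass $1-e^{-2\gamma}$) produces $f_n(0)^2\le C(\gamma)\,\EE_1(\bar u_n,\bar u_n)$ for an explicit constant $C(\gamma)>0$. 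Combined with $\EE_1(\bar u_n,\bar u_n)\to 0$ and the claim $f_n(0)\ge 1$, this is the desired contradiction.

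The main obstacle I foresee is the last point: assigning a value $f_n(0)$ to $\bar u_n$ and verifying it is at least $1$. Because $e^{-2\gamma r}$ is bounded above and below on each compact sub-interval of $[0,\infty)$, the weighted radial form domain coincides with the ordinary $H^1([0,R])$ for every $R>0$; hence $f_n$ admits a continuous representative extending to $r=0$. The $m$-a.e.\ inequality $\bar u_n\ge 1$ on $B_{r_n}(0)$ translates to $f_n\ge 1$ Lebesgue-a.e.\ on $(0,r_n)$, and since $\{f_n<1\}\cap(0,r_n)$ is open with zero measure it must be empty, forcing $f_n(0)\ge 1$ by continuity.
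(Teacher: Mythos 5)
Your proposal is correct, but it proceeds by a genuinely different route than the paper. The paper argues by explicit computation: it exhibits the $1$-equilibrium potential of the ball $B_\epsilon=\{|x|<\epsilon\}$, namely $f_\epsilon\equiv 1$ on $B_\epsilon$ and $f_\epsilon(x)=\exp\{c(|x|-\epsilon)\}$ outside, with $c=\gamma-\sqrt{\gamma^2+2}$ the decaying root of the radial equation $\tfrac12 f''-\gamma f'=f$, computes $\mathrm{Cap}(B_\epsilon)=\EE_1(f_\epsilon,f_\epsilon)=1+\frac{c+\gamma c^2}{\gamma-c}\,\mathrm{e}^{-2\gamma\epsilon}$, and then uses that $\mathrm{Cap}(\{0\})=\inf_{\epsilon>0}\mathrm{Cap}(B_\epsilon)=\gamma(1+c^2)/(\gamma-c)>0$, the infimum over open neighborhoods being the definition of the capacity of a point. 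You instead prove a trace inequality: after rotational symmetrization (which decreases $\EE_1$ and preserves the constraint $u\ge 1$ on a ball about the origin), the problem becomes one-dimensional with weight $\mathrm{e}^{-2\gamma r}$, bounded above and below near $r=0$; hence the radial profile has a continuous representative at $0$ and satisfies $f(0)^2\le C(\gamma)\,\EE_1(u,u)$. Your computations check out against the correct radial formulas (the weight $\rho\sim|x|^{-2}$ exactly cancels the Jacobian $r^2$, which is the real mechanism here: radially the form behaves like a one-dimensional Dirichlet form, for which singletons have positive capacity), and in fact your argument needs no contradiction at all — it directly yields the quantitative bound $\mathrm{Cap}(\{0\})\ge 1/C(\gamma)$ for every competitor $u$. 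What each approach buys: the paper's computation gives the sharp value of the capacity and displays the equilibrium potential, which is of independent interest; yours is softer and more robust, requiring no guess of $f_\epsilon$ and no ODE, and it would survive perturbations of the weight with the same $|x|^{-2}$ singularity. Two points you should tighten in a final write-up: justify that the Haar average $\bar u_n$ lies in $\FF$ with $\EE_1(\bar u_n,\bar u_n)\le\EE_1(u_n,u_n)$ (e.g.\ Pettis measurability of $g\mapsto u_n\circ g$ in the separable Hilbert space $(\FF,\EE_1)$ together with Minkowski's integral inequality, or define $\bar u_n$ pointwise by Fubini and compute its distributional gradient on $\mathbb{R}^3\setminus\{0\}$), and record explicitly that $\bar u_n$ is locally in $H^1$ on $\mathbb{R}^3\setminus\{0\}$, so that the radial weak derivative exists and your formulas for $\EE(\bar u_n,\bar u_n)$ and $\|\bar u_n\|_m^2$ are legitimate; with these remarks added, the proof is complete.
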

\begin{proof}
We only give a brief idea to prove this lemma. Let $B_\epsilon:=\{x: |x|<\epsilon\}$ for any $\epsilon>0$. We may formulate the $1$-equilibrium potential $f_\epsilon$ (Cf. \cite[\S2.1]{FOT11}) of $B_\epsilon$ and
\[
		f_\epsilon(x)=\left\lbrace 
			\begin{aligned}
			&1,\quad 0\leq |x|\leq \epsilon,\\ 
			&\exp\{c(|x|-\epsilon)\},\quad |x|>\epsilon,
			\end{aligned}
		\right. 
	\]
	where $c=\gamma-\sqrt{\gamma^2+2}$. 
Then we may compute 
\[
	\text{Cap}(B_\epsilon)=\EE_1(f_\epsilon, f_\epsilon)=1+\frac{c+\gamma c^2}{\gamma-c}\cdot \text{e}^{-2\gamma \epsilon}. 
\]
Finally,
\[
	\text{Cap}(\{0\})=\inf_{\epsilon>0}\text{Cap}(B_\epsilon)=\frac{\gamma+\gamma c^2}{\gamma-c}>0.
\]
That completes the proof. 
\end{proof}

\begin{theorem}\label{THMA4}
The CAF $N^{[f]}$ in \eqref{EQ52XTX} is not of bounded variation. In other words, the diffusion process $\mathbf{X}$ in Example~\ref{EXA56} is not a semimartingale. 
\end{theorem}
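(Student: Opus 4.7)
The strategy is to argue by contradiction, following closely the proof of the implication $(2)\Rightarrow (3)$ of Theorem~\ref{THM29}, adapted to the present setting in which $\rho = \psi_\gamma^2$ is $C^\infty$ only on $\mathbb{R}^3\setminus\{0\}$. The goal is to conclude that $C_c^\infty(\mathbb{R}^3) \subset \mathcal{D}(A)$, where $A$ is the self-adjoint generator of $(\EE,\FF)$, and then to contradict this by a direct $L^2(m)$-estimate on the formal pointwise expression for $Au$.

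Assume $N^{[f]}$ in \eqref{EQ52XTX} is of bounded variation, producing smooth signed measures $\nu_i := \mu_{N^{[f^i]}}$. The multi-dimensional analogue of Lemma~\ref{LM42}, whose derivation is contained in the first part of the proof of Proposition~\ref{PRO211}, yields an $\EE$-nest $\{F_k\}$ with $|\nu_i|(F_k)<\infty$ and $\EE(f^i,u) = -\langle\nu_i,u\rangle$ for $u \in \bigcup_k \FF_{b,F_k}$. Testing against $u \in C_c^\infty(\mathbb{R}^3\setminus\{0\})$ supported in some $F_k$ and integrating by parts on $\mathbb{R}^3\setminus\{0\}$ (where $\rho$ is smooth), I identify
\[
\nu_i\big|_{\mathbb{R}^3\setminus\{0\}} = \tfrac{1}{2}\partial_i\rho\cdot dx.
\]
A direct computation from $\psi_\gamma(x) = \sqrt{\gamma/(2\pi)}\,e^{-\gamma|x|}/|x|$ gives
\[
\partial_i\rho(x) = -\tfrac{\gamma}{\pi}e^{-2\gamma|x|}\Bigl(\tfrac{\gamma x_i}{|x|^3} + \tfrac{x_i}{|x|^4}\Bigr),
\]
so $|\partial_i\rho(x)|\sim |x_i|/|x|^4$ as $x\to 0$.

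I then adapt Steps~2--3 of the proof of $(2)\Rightarrow(3)$ in Theorem~\ref{THM29}: use Lemma~\ref{LM28} to obtain $\mu_{\langle f^i,f^j\rangle} = \delta_{ij}\,m$; apply the derivation rule for mutual energy measures \cite[Theorem~5.6.2]{FOT11} to compute $\mu_{\langle f^i,v\rangle}$ for $v$ in the nest; and assemble $\EE(h,v)=(-\mathcal{L}_0h,v)_m$ for $h \in C_c^\infty(\mathbb{R}^3)$ and $v \in \FF$ via density of $\bigcup_k \FF_{b,F_k}$ in $\FF$. This should give $C_c^\infty(\mathbb{R}^3) \subset \mathcal{D}(A)$ with $Au$ equal on $\mathbb{R}^3\setminus\{0\}$ to
\[
\mathcal{L}_0 u := \tfrac{1}{2}\Delta u + \tfrac{\nabla\psi_\gamma}{\psi_\gamma}\cdot \nabla u = \tfrac{1}{2}\Delta u - \Bigl(\gamma + \tfrac{1}{|x|}\Bigr)\tfrac{x}{|x|}\cdot\nabla u.
\]
The contradiction follows by choosing $u \in C_c^\infty(\mathbb{R}^3)$ with $\nabla u(0)\ne 0$ (for instance $u(x) = x_1\chi(x)$ with $\chi\equiv 1$ near $0$). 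Near $x=0$, $\mathcal{L}_0u(x) \sim -\nabla u(0)\cdot x/|x|^2$, so $|\mathcal{L}_0u(x)|^2\rho(x)\sim |x|^{-4}$, which is not integrable in $\mathbb{R}^3$ near $0$. Hence $Au\notin L^2(m)$, contradicting $u\in \mathcal{D}(A)$.

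The hard part is carrying out the middle step rigorously. One must justify the integration by parts used to identify $Au$ with $\mathcal{L}_0 u$ in the presence of the singularity at $0$: I would truncate on $\{|x|>\epsilon\}$ and verify that the surface flux on $\{|x|=\epsilon\}$ is $O(\epsilon)$ by expanding $v$ and $\nabla u$ in Taylor series at $0$ and using $\int_{S^2}\omega_i\,d\omega = 0$ together with $\int_{S^2}\omega_i\omega_j\,d\omega = \tfrac{4\pi}{3}\delta_{ij}$. Lemma~\ref{LMA3} (Cap$(\{0\})>0$) enters to control the $\EE$-nest near the origin: its $1$-equilibrium-potential computation forces $B_1\setminus\bigcup_k F_k$ to be polar, hence Lebesgue-null in $\mathbb{R}^3\setminus\{0\}$, which is what makes the pointwise identification $Au=\mathcal{L}_0u$ on $\mathbb{R}^3\setminus \{0\}$ unambiguous modulo an $m$-null set — and thus sufficient for the $L^2(m)$-blow-up to provide the contradiction.
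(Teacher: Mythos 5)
Your argument pivots on establishing $C_c^\infty(\mathbb{R}^3)\subset \mathcal{D}(A)$ by transplanting the proof of \textbf{(2)}$\Rightarrow$\textbf{(3)} of Theorem~\ref{THM29}, and this is where the proposal breaks down — in a way that is ultimately circular. First, hypothesis \textbf{(2)} of Theorem~\ref{THM29} includes that each $\mu_{N^{[f^i]}}$ is \emph{Radon}, and that is exactly what fails here: your own identification $\nu_i|_{\mathbb{R}^3\setminus\{0\}}=\tfrac{1}{2}\partial_i\rho\,dx$ gives $|\nu_i|(dx)=\frac{\gamma|x|+1}{|x|}\frac{|x_i|}{|x|}\,m(dx)$, whose mass on every ball $B_\delta$ is comparable to $\int_0^\delta dr/r=\infty$. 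Hence the opening step of the proof of Proposition~\ref{PRO211} (extending $\EE(u,f^i)=-\langle \mu_i,u\rangle$ from the nest to all of $C_c^\infty$), which uses $|\mu_{N^{[f^i]}}|(G)<\infty$ on relatively compact open $G$, is unavailable. Second, even granting the representation for $v$ in the nest, the density passage ``$\bigcup_k\FF_{b,K_k}$ is $\EE_1$-dense, hence $\EE(h,v)=(-\mathcal{L}_0h,v)_m$ for all $v\in\FF$ and $h\in\mathcal{D}(A)$'' silently requires $\mathcal{L}_0h\in L^2(m)$, so that $v\mapsto(-\mathcal{L}_0h,v)_m$ is $\EE_1$-continuous; in Theorem~\ref{THM29} this is automatic because $\mathcal{L}_0h\in C_c(U)$, but here it is precisely what your final step denies whenever $\nabla h(0)\neq 0$. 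So for the test function you need, the adapted argument never establishes $h\in\mathcal{D}(A)$ — it simply stops — while for $h$ with $\nabla h(0)=0$ one has $\mathcal{L}_0h=O(1)$ near $0$ and there is no blow-up; either way no contradiction is harvested. (Even absolute convergence of the pairing fails: for $u=x_1\chi$ and bounded $v$ with $v(0)\neq 0$, $\int|\mathcal{L}_0u|\,|v|\,dm$ already diverges like $\int_0^\delta dr/r$.) Your truncation-and-flux estimate addresses the pointwise identification of $Au$ away from $0$, which was never the obstruction.

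The paper instead converts the infinite local mass of $|\nu_i|$ at the origin into a contradiction with the very \emph{existence} of the smooth signed measure $\mu_i$: via the part form on $\mathbb{R}^3\setminus\{0\}$ one gets $\mu_i=\nu_i$ off the origin, so $\nu_i$ — and then $\frac{\gamma|x|+1}{|x|}\,m$ — would be smooth with respect to $(\EE,\FF)$; smoothness provides a quasi-continuous, q.e.\ strictly positive $g$ with $\int g\,\frac{\gamma|x|+1}{|x|}\,dm<\infty$; Lemma~\ref{LMA3} ($\mathrm{Cap}(\{0\})>0$) forces $g(0)>0$, and fine continuity at the \emph{non-polar} point $0$, combined with rotational invariance of $\mathbf{X}$, gives $\int_{S^2}g(\epsilon,\sigma)\,d\sigma\to g(0)$, whence the integral diverges. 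This is the true role of Lemma~\ref{LMA3}: non-polarity of $\{0\}$ is what makes infinite mass near $0$ incompatible with smoothness (near a polar set it would be harmless, which is exactly why $\nu_i$ \emph{is} smooth for the part process on $\mathbb{R}^3\setminus\{0\}$). Your use of the lemma — making the nest complement Lebesgue-null to disambiguate an a.e.\ identification — misses this. To salvage your route you would have to show directly that no generalized nest $\{F_k\}$ with $|\mu_i|(F_k)<\infty$ can exist, and that is, in substance, the paper's $g$-function and fine-continuity argument.
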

\begin{proof}
Let $(\EE^0,\FF^0)$ be the part Dirichlet form of $(\EE,\FF)$ on $\mathbb{R}^3\setminus \{0\}$. From Theorem~\ref{THMA4}, we know that $(\EE^0,\FF^0)$ is regular on $L^2(\mathbb{R}^3\setminus \{0\},m^0)$ with a special standard core $C_c^\infty(\mathbb{R}^3\setminus \{0\})$. Denote its associated diffusion process by $\mathbf{X}^0$. Then it follows from Lemma~\ref{LM28} and Theorem~\ref{THM29} that $\mathbf{X}^0$ enjoys the following Fukushima's decomposition:
\[
	f(X^0_t)-f(X^0_0)=M^{[f],0}_t+N^{[f],0}_t, \quad t\geq 0,
\]
where $M^{[f],0}$ is equivalent to a Brownian motion and $N^{[f],0}$ is of bounded variation. On the other hand, for any function $u\in C_c^\infty(\mathbb{R}^3\setminus \{0\})$, we have
\[
\begin{aligned}
	\mathcal{E}^0(f^i,u)
						&=\frac{1}{2}\int \frac{\partial u}{\partial x_i} \psi_\gamma^2(x)dx   \\
						&=-\frac{1}{2}\int u(x)\frac{\partial \psi_\gamma^2}{\partial x_i}(x)dx  \\
						&=\int u(x)\frac{\gamma |x|+1}{|x|}\frac{x_i}{|x|}m(dx).
\end{aligned}\]
Denote 
\[
	\nu_i(dx)=-\frac{\gamma |x|+1}{|x|}\frac{x_i}{|x|}m(dx). 
\]
Clearly, $\nu_i$ is a Radon measure on $\mathbb{R}^3\setminus \{0\}$. By \cite[Corollary~5.4.1]{FOT11}, we may conclude that $\nu_i$ is the smooth signed measure of $N^{[f^i],0}$, i.e. $\mu_{N^{[f^i],0}}=\nu_i$.

Now assume $N^{[f]}$ is of bounded variation. From \cite[Theorem~5.5.4]{FOT11}, we know that for each $i$ the signed smooth measure $\mu_i$ of $N^{[f^i]}$ satisfies
\begin{equation}\label{EQFIU}
	\mathcal{E}(f^i,u)=-\langle \mu_i,u\rangle
\end{equation}
for any $u\in \mathcal{F}_{b, F_k}$ where $\{F_k: k\geq 1\}$ is a generalized nest relative to $(\EE,\FF)$ associated with $\mu_i$, i.e. $|\mu_i|(F_k)<\infty$ for any $k\geq 1$. Set $F_k^0:=F_k\cap \left(\mathbb{R}^3\setminus \{0\}\right)$ for any $k\geq 1$. Note that $\{F_k^0: k\geq 1\}$ is a generalized nest relative to $(\EE^0,\FF^0)$ associated with $\mu_i^0:=\mu_i|_{\mathbb{R}^3\setminus \{0\}}$. Furthermore, since $\FF^0_{b,F^0_k}\subset \FF_{b,F_k}$, it follows from \eqref{EQFIU} that 
\[
	\EE^0(f^i,u)=-\langle \mu_i^0,u\rangle,\quad u\in \bigcup_{k\geq 1}\FF^0_{b,F^0_k}. 
\]
Thus we may also conclude $\mu_{N^{[f],0}}=\mu^0_i$ by \cite[Theorem~5.4.2]{FOT11}. Particularly,
\begin{equation}\label{MIDF}
\nu_i=\mu_i\text{ on }\mathbb{R}^3\setminus \{0\}.
\end{equation} 
Note that $\nu_i(\{0\})=0$.  Consequently, $\nu_i$ is also a smooth signed measure with respect to $(\EE, \FF)$, and
\[
	|\nu_i|=\frac{\gamma |x|+1}{|x|}\frac{|x_i|}{|x|}m(dx)
	\]
 is smooth (Cf. \cite[\S2.2]{FOT11}). Since $|x|\leq |x_1|+|x_2|+|x_3|$, it follows that
 \[
 		\frac{\gamma |x|+1}{|x|}m(dx)
 \]	
 is also smooth. Then there exists a quasi-continuous and q.e. strictly positive function $g$ such that (Cf. \cite{F01})
 \begin{equation}\label{IGXF}
 		\int g(x)\frac{\gamma |x|+1}{|x|}m(dx)<\infty.
 \end{equation}
 In particular, Lemma~\ref{LMA3} implies $g(0)>0$. On the other hand, let $B_{\epsilon}:=\{x: |x|\leq \epsilon\}$ and $T_\epsilon$ the hitting time of $B_{\epsilon}^c$. Then (Cf. \cite{M85}) 
 \[
 	\mathbf{E}^0(g(X_{T_\epsilon}))\rightarrow g(0)
 \]
 as $\epsilon\rightarrow 0$, whereas $X_{T_\epsilon}$ is uniformly distributed on $\partial B_{\epsilon}:=\{x:|x|=\epsilon\}$ since $\mathbf{X}$ is rotationally invariant. Thus 
 \[
 	\int_{S^2}g(\epsilon, \sigma)d\sigma\rightarrow g(0)
 \]
 as $\epsilon \rightarrow 0$, where $x=(\epsilon, \sigma)$ is the polar coordinates and $d\sigma$ is the surface measure on $S^2$. Then there is a constant $\delta>0$ such that when $\epsilon<\delta$, 
 \[
 \int_{S^2}g(\epsilon, \sigma)d\sigma>\frac{1}{2}g(0).
 \]
 It follows that
 \[\begin{aligned}
 	\int g(x)\frac{\gamma |x|+1}{|x|}m(dx)&=2\pi\gamma\int_0^\infty  \frac{\gamma r+1}{r}\text{e}^{-2\gamma r}dr\int_{S^2} g(r,\sigma)d\sigma  \\
 	&\geq \left(2\pi\gamma\int_0^\delta \frac{\gamma r+1}{r}\text{e}^{-2\gamma r}dr\right)\cdot \frac{1}{2}g(0),
 \end{aligned}\]
 which is infinite and then contradicts  \eqref{IGXF}. That completes the proof.
\end{proof}

\end{appendix}


\begin{thebibliography}{}
\bibitem{AF03}
Adams, R.A., Fournier, J.J.F.: Sobolev spaces. Elsevier/Academic Press, Amsterdam (2003).

\bibitem{AGHH05}
Albeverio, S., Gesztesy, F., H\o egh-Krohn, R., Holden, H.: Solvable models in quantum mechanics. AMS Chelsea Publishing, Providence, RI (2005).

\bibitem{AH82}
Albeverio, S., H\o egh-Krohn, R.: Some remarks on Dirichlet forms and their applications to quantum mechanics and statistical mechanics. In: Functional analysis in Markov processes (Katata/Kyoto, 1981). pp. 120-132. Springer, Berlin-New York (1982).

\bibitem{AHS77}
Albeverio, S., H\o egh-Krohn, R., Streit, L.: Energy forms, Hamiltonians, and distorted Brownian paths. J. Math. Phys. 18, 907-917 (1977).

\bibitem{BD59}
Beurling, A., Deny, J.: Dirichlet spaces. Proc. Nat. Acad. Sci. U.S.A. 45, 208-215 (1959).

\bibitem{BH91}
Bouleau, N., Hirsch, F.: Dirichlet forms and analysis on Wiener space. Walter de Gruyter \& Co., Berlin, Berlin, New York (1991).

\bibitem{C92}
Chen, Z.-Q.: On reflected Dirichlet spaces. Probab. Theory Related Fields. 94, 135-162 (1992).

\bibitem{CFT04}
Chen, Z.Q., Fitzsimmons, P.J., Takeda, M., Ying, J., Zhang, T.S.: Absolute continuity of symmetric Markov processes. Ann. Probab. 32, 2067-2098 (2004).

\bibitem{CF12}
Chen, Z.-Q., Fukushima, M.: Symmetric Markov processes, time change, and boundary theory. Princeton University Press, Princeton, NJ (2012).

\bibitem{CKMV10}
Cranston, M., Koralov, L., Molchanov, S., Vainberg, B.: A solvable model for homopolymers and self-similarity near the critical point. Random Oper. Stochastic Equations. 18, 73-95 (2010).

\bibitem{FFY05}
Fang, X., Fukushima, M., Ying, J.: On regular Dirichlet subspaces of $H^1(I)$ and associated linear diffusions. Osaka J. Math. 42, 27-41 (2005).

\bibitem{FHY10}
Fang, X., He, P., Ying, J.: Dirichlet forms associated with linear diffusions. Chin. Ann. Math. Ser. B. 31, 507-518 (2010).

\bibitem{F97}
Fitzsimmons, P.J.: Absolute continuity of symmetric diffusions. Ann. Probab. 25, 230-258 (1997).

\bibitem{F01}
Fitzsimmons, P.J.: On the quasi-regularity of semi-Dirichlet forms. Potential Anal. 15, 151-185 (2001).

\bibitem{F99}
Folland, G.B.: Real analysis. John Wiley \& Sons, Inc., New York (1999).

\bibitem{Fu79}
Fukushima, M.: A decomposition of additive functionals of finite energy. Nagoya Math. J. 74, 137-168 (1979). 
 
\bibitem{Fu81}
Fukushima, M.: On a representation of local martingale additive functionals of symmetric diffusions. In: Stochastic integrals (Proc. Sympos., Univ. Durham, Durham, 1980). pp. 110-118. Springer, Berlin (1981).
 
\bibitem{Fu82} 
Fukushima, M.: On absolute continuity of multidimensional symmetrizable diffusions. In: Functional analysis in Markov processes (Katata/Kyoto, 1981). pp. 146-176. Springer, Berlin-New York (1982). 

\bibitem{Fu85}
Fukushima, M.: Energy forms and diffusion processes. In: Mathematics + physics. Vol.\ 1. pp. 65-97. World Sci. Publishing, Singapore (1985).

\bibitem{FO89}
Fukushima, M., Oshima, Y.: On the skew product of symmetric diffusion processes. Forum Math. 1, 103-142 (1989). 
 
\bibitem{FOT11}
Fukushima, M., Oshima, Y., Takeda, M.: Dirichlet forms and symmetric Markov processes. Walter de Gruyter \& Co., Berlin (2011).

\bibitem{G62}
Galmarino, A.R.: Representation of an isotropic diffusion as a skew product. Z. Wahrscheinlichkeitstheorie und Verw. Gebiete. 1, 359-378 (1962).

\bibitem{H81}
Harrison, J.M., Shepp, L.A.: On skew Brownian motion. Ann. Probab. 9, 309-313 (1981).

\bibitem{IW81}
Ikeda, N., Watanabe, S.: Stochastic differential equations and diffusion processes. North-Holland Publishing Co., Amsterdam-New York; Kodansha, Ltd., Tokyo (1981).

\bibitem{IW78}
It\^o, K., Watanabe, S.: Introduction to stochastic differential equations. Presented at the Proceedings of the International Symposium on Stochastic Differential Equations (Res. Inst. Math. Sci., Kyoto Univ., Kyoto, 1976) (1978).

\bibitem{IM74}
It\^o, K., McKean, H.P., Jr: Diffusion processes and their sample paths. Springer-Verlag, Berlin-New York (1974).

\bibitem{K76}
Kunita, H.: Absolute continuity of Markov processes. In: S\'eminaire de Probabilit\'es, X (Premi\'ere partie, Univ. Strasbourg, Strasbourg, ann\'ee universitaire 1974/1975). pp. 44-77. Lecture Notes in Math., Vol. 511. Springer, Berlin (1976).

\bibitem{Ku02}
Kuwae, K.: Reflected Dirichlet forms and the uniqueness of Silverstein's extension. Potential Anal. 16, 221-247 (2002).

\bibitem{L11}
Lin, P.: Non-Friedrichs Self-adjoint extensions of the Laplacian in $\mathbb{R}^d$. arXiv (2011).

\bibitem{LY15}
Li, L., Ying, J.: Regular subspaces of Dirichlet forms. In: Festschrift Masatoshi Fukushima. pp. 397-420. World Scientific (2015).

\bibitem{LY15-2}
Li, L., Ying, J.: Regular subspaces of skew product diffusions, to appear in Forum Math.

\bibitem{M85}
Meyer, P.A.: Une remarque sur la topologie fine. In: S\'eminaire de probabilit\'es, XIX, 1983/84. pp. 176-176. Springer, Berlin (1985).

\bibitem{O89}
\^Okura, H.: Recurrence criteria for skew products of symmetric Markov processes. Forum Math. 1, 331-357 (1989).

\bibitem{O97}
\^Okura, H.: A new approach to the skew product of symmetric Markov processes. Mem. Fac. Engrg. Design Kyoto Inst. Tech. Ser. Sci. Tech. 46, 1-12 (1997).

\bibitem{O74}
Orey, S.: Conditions for the absolute continuity of two diffusions. Trans. Amer. Math. Soc. 193, 413-426 (1974).

\bibitem{Os87}
Oshima, Y.: On absolute continuity of two symmetric diffusion processes. In: Stochastic processes---mathematics and physics, II (Bielefeld, 1985). pp. 184-194. Springer, Berlin (1987).

\bibitem{OY84}
Oshima, Y., Yamada, T.: On some representations of continuous additive functionals locally of zero energy. J. Math. Soc. Japan. 36, 315-339 (1984).

\bibitem{RY99}
Revuz, D., Yor, M.: Continuous martingales and Brownian motion. Springer-Verlag, Berlin, Berlin, Heidelberg (1999).

\bibitem{RW00}
Rogers, L.C.G., Williams, D.: Diffusions, Markov processes, and martingales. II. Cambridge University Press, Cambridge, Cambridge (2000).

\bibitem{SU12}
Schilling, R.L., Uemura, T.: On the structure of the domain of a symmetric jump-type Dirichlet form. Publ. Res. Inst. Math. Sci. 48, 1-20 (2012).

\bibitem{S88}
Sharpe, M.: General theory of Markov processes. Academic Press, Inc., Boston, MA (1988).

\bibitem{SL15}
Song X, Li L. Regular Dirichlet subspaces and Mosco convergence. To appear in Chinese J. Contemp. Math.
See also arXiv: 1505.00451.


\end{thebibliography}
\end{document}